\documentclass[1p,12pt]{elsarticle}

\usepackage{graphicx}
\usepackage{amsmath,amsxtra,amssymb,latexsym, amscd,amsthm}

\theoremstyle{plain}
\newtheorem{thm}{Theorem}[section]

\newtheorem{lem}[thm]{Lemma}

\theoremstyle{definition}

\newtheorem{rmk}[thm]{Remark}

\textheight 21truecm \textwidth 15truecm
\numberwithin{equation}{section}
\numberwithin{figure}{section}
\numberwithin{table}{section}

\usepackage{amsthm,amsmath,amssymb}

\usepackage[colorlinks=true,citecolor=blue,linkcolor=black,urlcolor=blue]{hyperref}


\usepackage{graphicx}

\newcommand{\M}{\operatorname{M}}

\newcommand{\T}{\operatorname{T}}
\newcommand{\V}{\operatorname{V}}

\begin{document}

\begin{frontmatter}
\title{\textbf{Enumeration of lozenge tilings of a hexagon with a shamrock missing on the symmetry axis}}


\author{Tri Lai\corref{cor1}}
\address{Department of Mathematics\\ University of Nebraska -- Lincoln\\ Lincoln, NE 68588}
\cortext[cor1]{Corresponding author, email: tlai3@unl.edu, tel: 402-472-7001}

\author{Ranjan Rohatgi\corref{cor2}}
\address{Department of Mathematics and Computer Science\\ Saint Mary's College\\ Notre Dame, IN 46556}

\begin{abstract}
In their paper about a dual of MacMahon's classical theorem on plane partitions, Ciucu and Krattenthaler proved a closed form product formula for the tiling number of a hexagon with a ``shamrock", a union of four adjacent triangles, removed  in the center (\emph{Proc. Natl. Acad. Sci. USA 2013}). Lai later presented a $q$-enumeration for lozenge tilings of a hexagon with a shamrock removed from the boundary (\emph{European J. Combin. 2017}). It appears that the above are the only two positions of the shamrock hole that yield nice tiling enumerations. In this paper, we show that in the case of symmetric hexagons, we always have a simple product formula for the number of tilings when removing a shamrock at \emph{any} position along the symmetry axis. Our result also generalizes  Eisenk\"{o}lbl's related work about lozenge tilings of a hexagon with two unit triangles missing on the symmetry axis (\emph{Electron. J. Combin. 1999}).
\end{abstract}

\begin{keyword}
Perfect matchings\sep plane partition \sep dual graph \sep  graphical condensation 
\MSC[2010] 05A15, 05B45
\end{keyword}

\end{frontmatter}

\section{Introduction}\label{Intro}
The study of enumeration of lozenge tilings dates back to the early 1900s when Percy Alexander MacMahon proved his classical theorem on plane partitions fitting in a given box \cite{Mac} .
His theorem yields an exact enumeration of lozenge tilings of a centrally symmetric hexagon on the triangular lattice.   Here a \emph{lozenge} (or \emph{unit rhombus})
is the union of any two unit equilateral triangles sharing an edge, and a \emph{lozenge tiling} of a region on a triangular lattice is a covering of the region by lozenges so that there are no gaps or overlaps.
Generalizing MacMahon's tiling enumeration is an important topic in the field of enumeration of tilings.
A natural way to generalize this theorem is investigating hexagons with certain defects, and the most common defect is a triangle removed from the hexagon.
We usually call such a removed triangle a \emph{triangular hole} if it stays inside the hexagon or  a \emph{triangular dent} if it lies on the boundary of the hexagon. 

The tiling enumeration of a hexagon with a triangular dent was first studied by Cohn, Larsen, and Propp (see Proposition 2.1 in \cite{CLP}), and the case of triangular hole was investigated by Ciucu, Krattenthaler, Eisenk\"{o}lbl, and Zare \cite{CEKZ} (the latter region is known as a `\emph{punctured hexagon}').  It is worth noticing that in the dent case, the removed triangle can appear at any position on the base of the hexagon; however in the hole case the removed triangle must be located in the center of the hexagon.  Recently, Ciucu and Krattenthaler \cite{CK} generalized the tiling enumeration of a punctured hexagon to a hexagon with a cluster of four triangular holes, called a `\emph{shamrock hole}' (see Figure \ref{compareshamrock}(a)).  Interestingly, the triangular dent can also be generalized to a `\emph{shamrock dent}'. In particular, the first author \cite{Lai4} shows that a hexagon with a shamrock dent also has tilings enumerated by a simple product formula (see Figure \ref{compareshamrock}(b)). Similar to the case of triangular holes and dents, the shamrock hole must be in the center of the hexagon while the shamrock dent can vary along a side of the hexagon.

\begin{figure}\centering
\includegraphics[width=12cm]{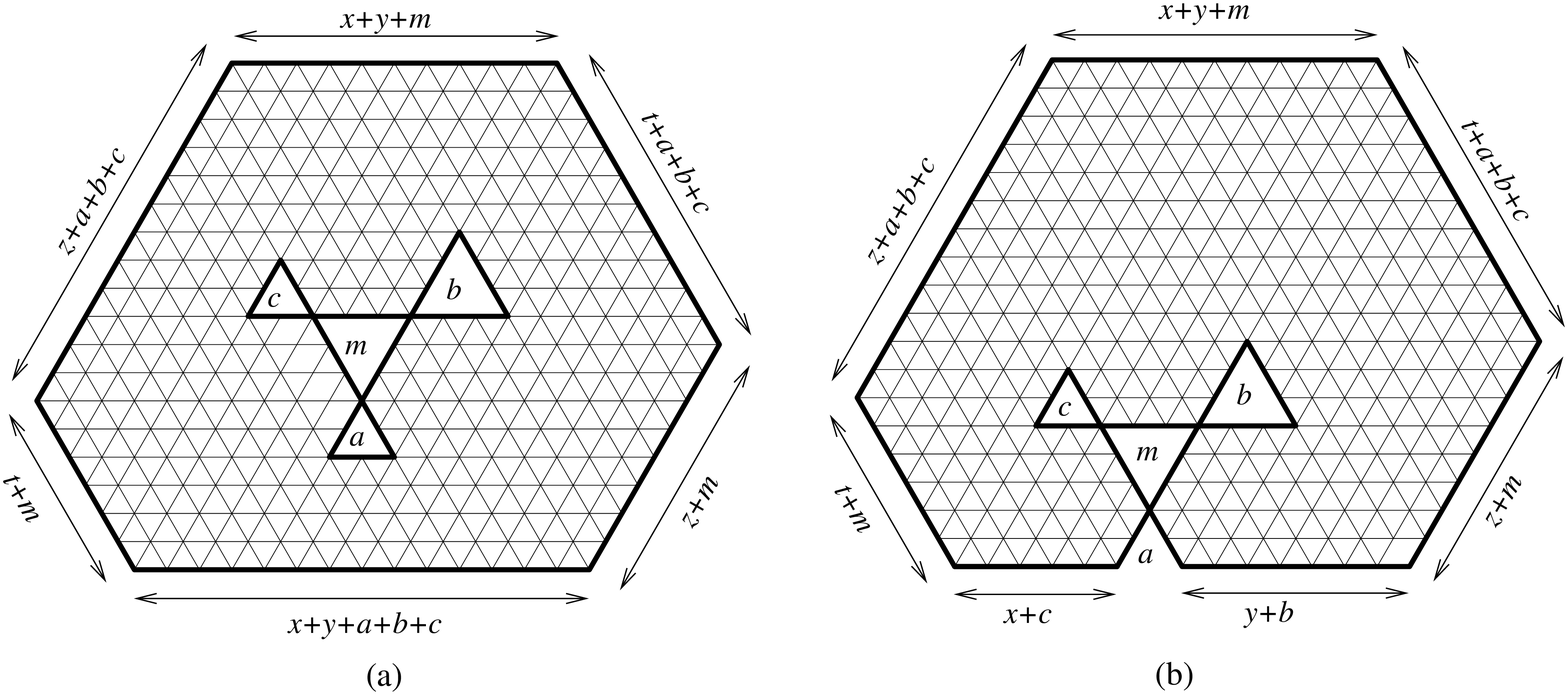}
\caption{(a) A hexagon with a shamrock hole in the center in \cite{CK}.  (b) A hexagon with a shamrock dent in \cite{Lai4}.}\label{compareshamrock}
\end{figure}

A question arises from the similarity of the above objects in Ciucu-Krattenthaler's \cite{CK} and Lai's  \cite{Lai4} results: Is there any common generalization of the shamrock hole and shamrock dent that yields a simple product formula? Or, is there any other position of the shamrock hole that provides a nice tiling formula? Unfortunately, it appears that such a common generalization does \emph{not} exist, and there only two `good' positions to remove a shamrock in the general case.  By contrast, we show, in Theorem \ref{mainthm}, that \emph{in the case of symmetric hexagons}, we always have a closed form tiling formula when removing a shamrock at \emph{any} position along the symmetry axis (see Figure \ref{symmetricshamrock}). In other words, the common generalization for the shamrock dent and shamrock hole exists in the symmetric case.

\begin{figure}\centering
\includegraphics[width=7cm]{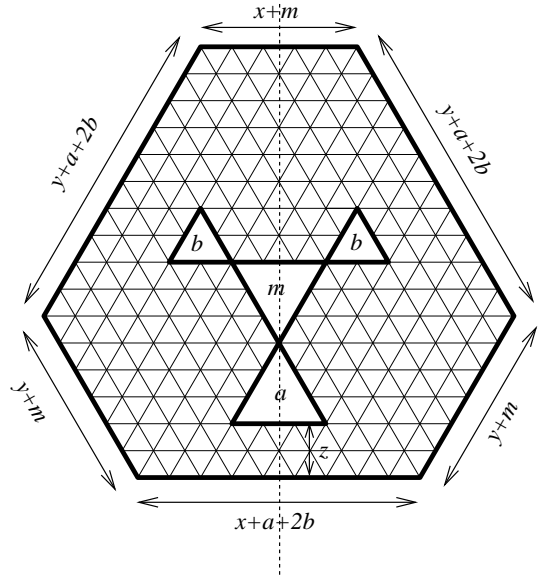}
\caption{A symmetric hexagon with a (symmetric) shamrock removed along the symmetry axis.}\label{symmetricshamrock}
\end{figure}

The main result of this paper, Theorem \ref{mainthm}, also generalizes related work of Eisenk\"{o}lbl \cite{Eisen} as follows. In the late 1990s, James Propp published his  well-known survey paper \cite{Propp} in which he collected 32 challenging open problems in the field of enumeration of tilings. Problem 4 on the list is finding the tiling number of a regular hexagon from which two of the innermost unit triangles (one up-pointing and one down-pointing) have been removed. There are two cases to distinguish. Eisenk\"{o}lbl \cite{Eisen} solved the first  case of the problem as shown in Figure \ref{unitbowtie}(a) (and Krattenthaler and Fulmek solved the second case \cite{FK1}, see Figure \ref{unitbowtie}(b)). In particular, she obtains a stronger result by proving a simple product formula for the number of tilings of a symmetric hexagon from which a \emph{`unit bowtie'}, a union of two adjacent unit triangles of opposite orientation, has been removed along the symmetry axis (illustrated in Figure \ref{unitbowtie}(c)). One readily sees that our result implies the result of Eisenk\"{o}lbl as a unit bowtie is a special shamrock with two empty `leaves'.

\begin{figure}\centering
\includegraphics[width=9cm]{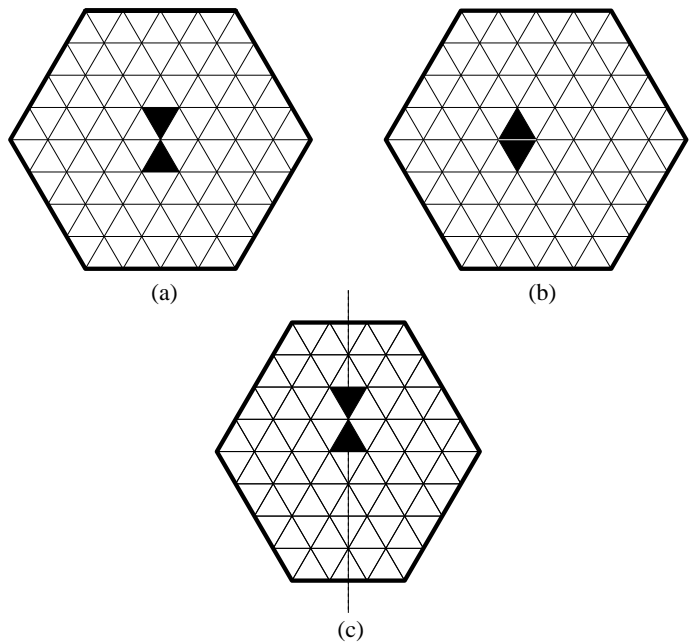}
\caption{(a) and (b). A regular hexagon with two innermost unit triangles removed. (c) A symmetric hexagon with a unit bowtie missing along the symmetry axis.}\label{unitbowtie}
\end{figure}

The main method used in this paper is the graphical condensation introduced by Eric Kuo \cite{Kuo}. \emph{Kuo condensation} has been recognized as powerful combinatorial version of the well-known Dodgson condensation in linear algebra (see e.g. \cite{Abeles}, \cite{Dodgson}, \cite{Mui}). We will use Kuo condensation to find the number of tilings of two `halves' of our defected hexagon, then apply a factorization lemma of Ciucu \cite{Ciucu3} to find the number of tilings of the whole defected hexagon. Depending on the parities of the parameters, there are \emph{eight} different `halved hexagons' to enumerate here.
Moreover, the enumeration of these halved hexagons can be considered as generalizations of Proctor's \cite{Proc} and Ciucu's \cite{Ciucu1}  result (see Theorems \ref{Proctiling} and \ref{Ciucuthm} in Section \ref{Statement} and the illustrating Figure \ref{halfhex6b}).

The rest of the paper is organized as follows. In Section \ref{Statement}, we give precise statements of our main results. Several definitions and fundamental results will be provided in Section \ref{Background}. For ease of references, we quote Ciucu's factorization lemma and the current version of Kuo's graphical condensation that will be employed in our proofs. Section \ref{ProofMain} is devoted to the proofs of our main results.

\section{Precise statement of main results}\label{Statement}

We define the Pochhammer symbol $(x)_n$ as:
\begin{equation}\label{poch}
(x)_n:=
\begin{cases}
x(x+1)(x+2)\dotsc(x+n-1) &\text{if $n>0$;}\\
1 &\text{if $n=0$;}\\
\frac{1}{(x-1)(x-2)(x-3)\dotsc(x+n)} &\text{if $n<0$,}
\end{cases}
\end{equation}
and its `skipped' version $[x]_n$ by
\begin{equation}\label{spoch}
[x]_n:=\begin{cases}
x(x+2)(x+4)\dotsc(x+2(n-1)) &\text{if $n>0$;}\\
1 &\text{if $n=0$;}\\
\frac{1}{(x-2)(x-4)(x-6)\dotsc(x+2n)} &\text{if $n<0$.}
\end{cases}\end{equation}
We also define the \emph{`trapezoidal products'}\footnote{If we write down the formulas of the $\T$- and $\V$-products in a line, the innermost factor has the largest exponent. This can be visualized as a trapezoid of factors.}  $\T(x,n,m)$ and  $\V(x,n,m)$ as
\begin{equation}\label{Teq}
\T(x,n,m):=
\begin{cases}
\prod_{i=0}^{m-1}(x+i)_{n-2i} &\text{if $m>0$;}\\
1 &\text{if $m=0$;}\\
\T(x,n,-m)^{-1}&\text{if $m<0$,}
\end{cases}
\end{equation}

\begin{equation}\label{Veq}
\V(x,n,m):=
\begin{cases}
\prod_{i=0}^{m-1}[x+2i]_{n-2i}&\text{if $m>0$;}\\
1&\text{if $m=0$;}\\
\V(x,n,-m)^{-1}&\text{if $m<0$.}
\end{cases}
\end{equation}

\begin{figure}\centering
\includegraphics[width=10cm]{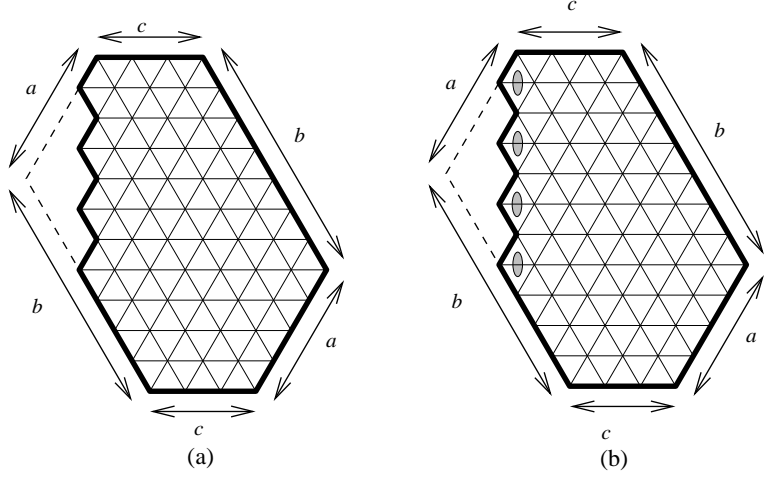}
\caption{(a) The region $P_{a,b,c}$.  (b) The weighted region $P'_{a,b,c}$; the lozenges with shaded cores have weight $\frac{1}{2}$.}\label{halfhex6b}
\end{figure}

We first quote here the classical tiling enumeration due to Proctor \cite{Proc} as follows. Assume $a,b,c$ are three nonnegative integers, such that $b\geq a$. Consider the semi-regular hexagon of side-lengths $a,b,c,a,b,c$
 (in counterclockwise order, starting from the northwestern side) on the triangular lattice with a maximal staircase cut off, denoted by $P_{a,b,c}$ (see Figure \ref{halfhex6b}(a)). Here $\M(R)$ denotes the number of tilings of a region $R$
\begin{thm}\label{Proctiling} For any non-negative integers $a,$ $b$, and $c$ with $a\leq b$, we have
\begin{equation}
\M(P_{a,b,c})=\prod_{i=1}^{a}\left[\prod_{j=1}^{b-a+1}\frac{c+i+j-1}{i+j-1}\prod_{j=b-a+2}^{b-a+i}\frac{2c+i+j-1}{i+j-1}\right],
\end{equation}
where empty products are taken to be $1$.
\end{thm}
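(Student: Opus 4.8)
The plan is to translate lozenge tilings of $P_{a,b,c}$ into families of non-intersecting lattice paths and then evaluate the resulting determinant. First I would invoke the standard correspondence between lozenge tilings and non-intersecting lattice paths: one of the three lozenge orientations is forbidden to the paths while the other two become horizontal and diagonal unit steps, the up-pointing unit triangles along one zig-zag portion of $\partial P_{a,b,c}$ become sources $u_1,\dots,u_a$, and the down-pointing triangles along the opposite zig-zag portion become sinks $v_1,\dots,v_a$ (there are equally many because $P_{a,b,c}$ is balanced, and one checks the number is $a$ since $a\le b$). Each tiling corresponds to an $a$-tuple of pairwise non-intersecting paths $u_i\to v_{\pi(i)}$, and since the two families of endpoints occur in the only cyclic order compatible with planarity, $\pi$ must be the identity. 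The Lindstr\"{o}m--Gessel--Viennot lemma then yields
\begin{equation}
\M(P_{a,b,c})=\det_{1\le i,j\le a}\bigl(p(u_i,v_j)\bigr),
\end{equation}
where $p(u_i,v_j)$ counts the lattice paths from $u_i$ to $v_j$ that stay inside $P_{a,b,c}$.

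The region $P_{a,b,c}$ is a \emph{halved hexagon}: reflecting it across the maximal staircase produces a hexagon whose reflectively symmetric tilings are in bijection with the tilings of $P_{a,b,c}$. In path language this makes the staircase a reflecting wall, so by the reflection principle each count $p(u_i,v_j)$ is no longer a single binomial coefficient but a difference of two binomial coefficients with arguments linear in $i$ and $j$; the subtracted ``reflected'' term effectively doubles one of the path lengths, which is where the factor $2c$ in the statement comes from. The heart of the proof --- and the step I expect to be the main obstacle --- is the exact evaluation of this $a\times a$ determinant of two-term binomial entries and the verification that it equals the stated product. Concretely I would (i) pull common factors out of each row and each column to reduce the entries to polynomials in $i$ and $j$, (ii) apply elementary row and column operations to cancel the reflected term and bring the matrix to a shape covered by a standard determinant lemma of Cauchy/Vandermonde type, and (iii) simplify the resulting product. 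The split in the statement between the range $1\le j\le b-a+1$ (carrying $c+i+j-1$) and the range $b-a+2\le j\le b-a+i$ (carrying $2c+i+j-1$) should fall out of this computation: it mirrors the split between the part of a path that never meets the staircase wall and the part that does, the latter being precisely where the doubling occurs.

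An alternative, and one closer in spirit to the rest of this paper, is an inductive proof by Kuo's graphical condensation in the form quoted in Section~\ref{Background}. One would pick four boundary unit triangles of the planar bipartite graph dual to $P_{a,b,c}$ so that the condensation identity expresses $\M(P_{a,b,c})$ through $\M$ of regions $P_{a',b',c'}$ with strictly smaller parameters (together, possibly, with regions that become trivial after removing forced lozenges), check the base cases $a=0$ (the maximal staircase forces a unique tiling, matching the empty product $1$) and $a=1$ (a single strip of triangles, counted by hand), and verify that the claimed product obeys the same recurrence --- a routine but lengthy manipulation of the Pochhammer-type and bracket products. Here the obstacle is choosing the four contraction points so that every region produced by the condensation stays inside the family $\{P_{a,b,c}\}$ (up to forced lozenges); the lattice-path route sidesteps this at the price of the more delicate determinant evaluation.
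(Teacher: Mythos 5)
The paper offers no proof of this statement: Theorem \ref{Proctiling} is quoted as Proctor's classical result \cite{Proc} and is used as a black box throughout (for instance in the base cases $x=0$ and $b=0$ of the proof of Theorem \ref{Bthm1}). There is therefore no in-paper argument to compare yours against, and your attempt has to be judged on its own.

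Judged that way, it has a genuine gap. The lattice-path translation and the appeal to the Lindstr\"{o}m--Gessel--Viennot lemma are standard and correct in outline, and your heuristic for where the factor $2c$ comes from (a reflected term across the staircase) points in a sensible direction. But the entire content of the theorem is the evaluation of the $a\times a$ determinant with two-term binomial entries, and you do not perform it: you never write down the entries $p(u_i,v_j)$ explicitly, you do not identify which ``standard determinant lemma of Cauchy/Vandermonde type'' is supposed to apply, and you do not verify that its output matches the stated product, including the delicate split of the inner product at $j=b-a+1$. Saying that row and column operations ``should'' reduce the matrix to a known shape is a plan, not a proof; determinants of differences of binomials of this kind are precisely the ones that require a genuine identity (in the literature they are typically settled by Krattenthaler-style determinant evaluations or by the ``identification of factors'' method), and nothing in your sketch guarantees the computation closes. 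The alternative route via Kuo condensation is likewise only a sketch: you correctly flag the key obstacle (choosing the four boundary triangles so that every region produced by the condensation stays in the family $\{P_{a,b,c}\}$ after removing forced lozenges) but do not resolve it, and you would additionally need base cases in each parameter being reduced plus the verification that the claimed product satisfies the resulting recurrence. Either route is viable in principle --- the second is exactly the strategy the paper carries out for the more general $B$- and $H$-regions --- but neither is completed here.
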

Ciucu \cite{Ciucu1} proved the following weighted version of Proctor's theorem. We now assign to each vertical lozenge along the zigzag cut in $P_{a,b,c}$ a weight $\frac{1}{2}$ (see the lozenges with shaded cores in Figure \ref{halfhex6b}(b)); other lozenges are unweighted, i.e. carrying a weight $1$. Denote by $P'_{a,b,c}$ the resulting weighted region.
In the weighted case, each tiling carries the weight equal to the product of weights of its lozenges, and $\M(R)$ now denotes the weighted sum of all tilings of a weighted region $R$.

\begin{thm}\label{Ciucuthm} For any non-negative integers $a,$ $b$, and $c$ with $a\leq b$
\begin{equation}
\M(P'_{a,b,c})=2^{-a}\prod_{i=1}^{a}\frac{2c+b-a+i}{c+b-a+i}\prod_{i=1}^{a}\left[\prod_{j=1}^{b-a+1}\frac{c+i+j-1}{i+j-1}\prod_{j=b-a+2}^{b-a+i}\frac{2c+i+j-1}{i+j-1}\right].
\end{equation}
\end{thm}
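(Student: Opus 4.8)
The plan is to derive Theorem~\ref{Ciucuthm} from Proctor's enumeration (Theorem~\ref{Proctiling}), MacMahon's box formula \cite{Mac}, and Ciucu's factorization theorem for regions with a reflective symmetry. The starting observation is that $P'_{a,b,c}$ is precisely one of the two ``halves'' obtained by cutting a suitable hexagon $H=H(a,b,c)$, symmetric about a horizontal axis, along its symmetry axis: gluing $P'_{a,b,c}$ to its mirror image along the zigzag cut interlocks the two sawtooth boundaries into a single central band of vertical lozenges, and those are exactly the lozenges that carry weight $\tfrac12$ in $P'_{a,b,c}$. So the first task is to read off the side-lengths of $H$ (together with the relevant parity/boundary data) in terms of $a,b,c$.

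Once $H$ is identified, Ciucu's factorization theorem yields $\M(H)=2^{m}\,\M(P'_{a,b,c})\,\M(H^{-})$, where $H^{-}$ is the complementary half and $m$ is an explicit exponent read off from the width of the central band. The next step is to recognize $H^{-}$ as an \emph{unweighted} Proctor region $P_{a',b',c'}$ with explicit parameters (morally $P_{a,b,c}$ with one ``row'' removed, since deleting the central lozenges deepens the staircase). Feeding Theorem~\ref{Proctiling} into $\M(H^{-})$ and MacMahon's formula into $\M(H)$ then rewrites the desired identity as $\M(P'_{a,b,c})=\M(H)/\bigl(2^{m}\,\M(P_{a',b',c'})\bigr)$, i.e.\ an explicit product of Pochhammer symbols.

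What is then left is purely algebraic: after canceling the triple products coming from MacMahon's and Proctor's formulas, reorganize the surviving factors into the $\T$- and $\V$-products and check that the leftover quotient collapses to $2^{-a}\prod_{i=1}^{a}\frac{2c+b-a+i}{c+b-a+i}$ times $\M(P_{a,b,c})$, matching the statement. I expect the real obstacles to be (i) the bookkeeping in the first two steps -- getting $H$, the parameters $a',b',c'$, and the exponent $2^{m}$ exactly right on the triangular lattice, where off-by-one and parity slips are easy -- and (ii) the hypergeometric simplification at the end, routine in principle but requiring care with empty products and with the ``negative-index'' and ``skipped'' conventions in \eqref{poch}--\eqref{Veq}.

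As an alternative more in line with the methods used elsewhere in this paper, one can instead prove Theorem~\ref{Ciucuthm} by induction (say on $a+b+c$), applying Kuo's graphical condensation to the weighted planar dual of $P'_{a,b,c}$ with the four distinguished vertices placed along the zigzag cut so that the regions appearing in the condensation recurrence are again weighted Proctor regions of the same type with smaller parameters; one then checks that the claimed product satisfies the resulting bilinear recurrence together with the base cases $a=0$ and $c=0$ (the latter reducing to Theorem~\ref{Proctiling} and to an easy direct count).
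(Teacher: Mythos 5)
First, a point of reference: the paper itself does not prove Theorem~\ref{Ciucuthm}; it is quoted from Ciucu's memoir \cite{Ciucu1} as known input. So your proposal has to stand on its own, and its primary route has a genuine gap. You want to realize $P'_{a,b,c}$ as one half of a symmetric hexagon $H$, evaluate $\M(H)$ by MacMahon, and divide. But a hexagon on the triangular lattice is tileable only if opposite sides are equal, and imposing in addition a reflective symmetry forces all four sides not met by the axis to have a common length; symmetric tileable hexagons therefore form only a \emph{two}-parameter family, and so do their halves. The regions $P'_{a,b,c}$ form a genuinely three-parameter family, so for general $a<b$ there is no hexagon $H$ whose Ciucu factorization produces $P'_{a,b,c}$: the double of $P_{a,b,c}$ across its zigzag acquires extra sides coming from the portion of boundary (of length governed by $b-a$) that is not part of the sawtooth, and MacMahon's formula does not apply to it. The factorization route can be rescued only by replacing $H$ with a symmetric region carrying a defect on the axis whose tiling count is already known --- e.g.\ a punctured hexagon as in \cite{CEKZ} or a degenerate case of the $HS$-regions --- but that input is far deeper than MacMahon and is essentially of the same kind as the results this machinery is being used to establish, so it cannot serve as the elementary engine you intend. (Your step ``recognize $H^-$ as $P_{a',b',c'}$'' is fine in spirit; it is the existence of $H$ itself that fails.)

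Your fallback suggestion --- induction via Kuo condensation applied directly to the weighted dual graph of $P'_{a,b,c}$ --- is the sound route, and it is in effect what the paper does for the strictly more general region $B'_{x,a,b,c}$ (Theorem~\ref{Bthm2}, proved by the condensation of Figure~\ref{halvedbowtie3}(b) exactly as in the proof of Theorem~\ref{Bthm1}); since $B'_{x,0,b,c}$ is a $P'$-region, Theorem~\ref{Ciucuthm} follows from that argument. As written, though, your sketch of this alternative is underspecified: you would need to place the four Kuo vertices at the two corners away from the weighted zigzag (so that the weighted edges are untouched and the five regions in the recurrence are again regions of the same weighted family with two of the three side parameters decreased), and the base cases are not quite ``$a=0$ and $c=0$'' for the parameters as you index them --- one degenerate case reduces to Proctor's Theorem~\ref{Proctiling} after stripping forced weighted lozenges (producing the power of $2$), and another reduces to MacMahon. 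With those details supplied, the second approach works; the first does not.
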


\begin{figure}\centering
\includegraphics[width=10cm]{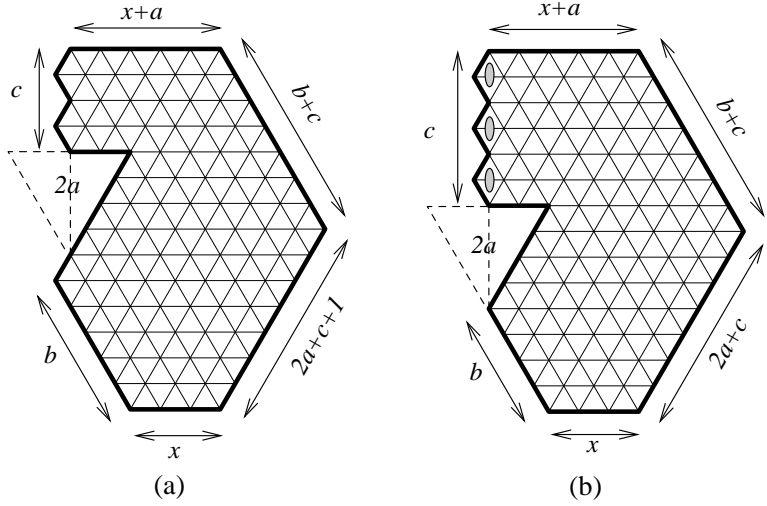}
\caption{(a) The region $B_{x,a,b,c}$.  (b) The weighted region $B'_{x,a,b,c}$; the lozenges with shaded cores have weight $\frac{1}{2}$.}\label{halvedbowtie}
\end{figure}

We first consider a hexagonal region whose northern, northeastern, southeastern, southern, and southwestern sides have lengths $x+a,b+c,2a+c+1,x,b$, respectively, and whose
western side follows a zigzag lattice path of length $a+c+1$. Next, we remove a half down-pointing triangle of side $2a$ from the western side as in Figure \ref{halvedbowtie}(a).
 Denote by $B_{x,a,b,c}$ the resulting region.
 
\begin{thm}\label{Bthm1} For any non-negative integers $x$, $a,$ $b$, and $c$
\begin{align}\label{Beq1}
\M(B_{x,a,b,c})=&\M(P_{c,c,a})\frac{\V(2x+2a+b+2,c,\lfloor\frac{c+1}{2}\rfloor)}{\V(2a+b+2,c,\lfloor\frac{c+1}{2}\rfloor)}\notag\\
&\times\frac{\T(x+1,2a+b+c,b)\T(x+a+\frac{b+3}{2},c-1,\lfloor\frac{c+1}{2}\rfloor)}{\T(1,2a+b+c,b)\T(a+\frac{b+3}{2},c-1,\lfloor\frac{c+1}{2}\rfloor)}
\end{align}
if $b$ is odd, and
\begin{align}\label{Beq2}
\M(B_{x,a,b,c})=&\M(P_{c,c,a})\frac{\V(2x+2a+b+3,c-1,\lfloor\frac{c+1}{2}\rfloor)}{\V(2a+b+3,c-1,\lfloor\frac{c+1}{2}\rfloor)}\notag\\
&\times\frac{\T(x+1,2a+b+c,b)\T(x+a+\frac{b+2}{2},c,\lfloor\frac{c+1}{2}\rfloor)}{\T(1,2a+b+c,b)\T(a+\frac{b+2}{2},c,\lfloor\frac{c+1}{2}\rfloor)}
\end{align}
if $b$ is even.
\end{thm}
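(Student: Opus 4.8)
The plan is to prove the two formulas \eqref{Beq1} and \eqref{Beq2} together by induction, using Eric Kuo's graphical condensation (quoted in Section~\ref{Background}) to set up a recurrence, Proctor's Theorem~\ref{Proctiling} and Ciucu's Theorem~\ref{Ciucuthm} to anchor the degenerate cases, and an analysis of forced lozenges to handle the boundary of the parameter range. I would induct on the quantity $x+b+c$. When one of these parameters is small the region $B_{x,a,b,c}$ collapses: removing the lozenges forced along the nearly degenerate side leaves a semiregular hexagon with a maximal staircase cut off --- a Proctor region $P_{a',b',c'}$, or its weighted analogue --- while the right-hand sides of \eqref{Beq1} and \eqref{Beq2} collapse correspondingly, the prefactor $\M(P_{c,c,a})$ being independent of $x$ and each of the three ratio-factors equalling $1$ once $x=0$. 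This settles the base cases through Theorems~\ref{Proctiling} and~\ref{Ciucuthm}.

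For the inductive step I would pass to the dual graph $G$ of $B_{x,a,b,c}$ and single out four unit triangles $u,v,w,s$ on the boundary, occurring in this cyclic order, with $u,w$ of one orientation and $v,s$ of the other, positioned so that each of the five regions obtained by deleting the vertex sets $\{u,v,w,s\}$, $\{u,v\}$, $\{w,s\}$, $\{u,s\}$, $\{v,w\}$ becomes, after removing the lozenges it forces, another region of the form $B_{x',a',b',c'}$ (possibly its weighted version, as in Figure~\ref{halvedbowtie}(b)) or a Proctor region, always with $x'+b'+c'<x+b+c$. The natural candidates are a pair of triangles flanking the south side of the hexagon together with a pair near the point where the removed half-triangle of side $2a$ meets the zigzag western side; moving these trims one unit of $x$ and/or one unit off the removed triangle. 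Kuo's identity then relates $\M(B_{x,a,b,c})$ to the $\M$-values of these five smaller regions, all known by the induction hypothesis. The split into \eqref{Beq1} and \eqref{Beq2} arises precisely here: whether $b$ is odd or even determines which lattice path the zigzag side follows next to the removed triangle, hence which of the degenerate regions carries the ``$c$ odd'' versus ``$c$ even'' bookkeeping of the floor exponents $\lfloor\frac{c+1}{2}\rfloor$ in the $\T$- and $\V$-products.

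It then remains to verify that the products on the right of \eqref{Beq1} and \eqref{Beq2} satisfy the same Kuo recurrence. After cancelling the common factor $\M(P_{c,c,a})$ and the $x$-free denominators --- such as $\V(2a+b+2,c,\lfloor\frac{c+1}{2}\rfloor)$, $\T(1,2a+b+c,b)$ and $\T(a+\frac{b+3}{2},c-1,\lfloor\frac{c+1}{2}\rfloor)$, together with their even-$b$ counterparts --- the required identity becomes a finite relation among shifted Pochhammer symbols $(x)_n$ and skipped Pochhammer symbols $[x]_n$. I would verify it by expanding each $\T$- and $\V$-ratio into an explicit product of linear factors in $x$, clearing denominators, and checking equality of the resulting polynomials --- equivalently, by telescoping ratios of consecutive factors. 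This computation must be repeated for each combination of the parities of $b$ and $c$ (and, if the chosen four triangles demand it, for the companion weighted family), which accounts for the several cases announced in the introduction; it is long but entirely mechanical.

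The step I expect to be the real obstacle is the construction in the second paragraph: exhibiting a single placement of $u,v,w,s$ for which all five resulting regions, and all of their forced-lozenge reductions, remain inside the family $\{B_{x,a,b,c}\}$ together with its weighted version and the Proctor regions, for every parity of $b$ and $c$ at once, and with the induction parameter strictly decreasing. Once such a placement is in hand, the base cases are immediate from Theorems~\ref{Proctiling} and~\ref{Ciucuthm}, and the inductive step reduces to the Pochhammer-symbol bookkeeping sketched above.
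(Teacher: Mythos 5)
Your overall strategy --- induction on $x+b+c$, base cases from Theorem~\ref{Proctiling}, a Kuo recurrence for the inductive step, and a mechanical verification of the product formula via Lemma~\ref{trapsimpn} --- is exactly the paper's. But the step you yourself flag as the ``real obstacle'' is where the proposal falls short, and the placement of $u,v,w,s$ you tentatively suggest is not the one that works. You propose putting two of the four vertices near the point where the removed half-triangle of side $2a$ meets the zigzag western side, so that deletion ``trims one unit off the removed triangle.'' That would change the parameter $a$ --- and since the hole has side $2a$, a unit trim does not even produce another member of the family --- and it would drag the weighted regions $B'$ into the recurrence, which are the subject of the separate Theorem~\ref{Bthm2} and not available to you here. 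The paper instead places all four vertices far from the hole: $u,v$ are the two black unit triangles in the \emph{northeastern} corner and $w,s$ the two in the \emph{southeastern} corner. Forced-lozenge analysis then shows that the five deletions yield $B_{x,a,b-1,c-1}$, $B_{x,a,b,c-1}$, $B_{x,a,b-1,c}$, $B_{x+1,a,b-1,c-1}$, and $B_{x-1,a,b,c}$: the parameter $a$ is never touched, every region stays unweighted and inside the family, and the induction parameter strictly decreases. Lemma~\ref{Kuothm1} then gives
\begin{equation*}
\M(B_{x,a,b,c})\M(B_{x,a,b-1,c-1})=\M(B_{x,a,b,c-1})\M(B_{x,a,b-1,c})+\M(B_{x+1,a,b-1,c-1})\M(B_{x-1,a,b,c}),
\end{equation*}
after which the Pochhammer bookkeeping you describe goes through as you expect (the paper carries out the case $b$ odd, $c$ even explicitly and notes the others are analogous).

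Two smaller corrections to your base cases: the weighted analogue of Proctor's region never enters this proof, and the case $c=0$ does not reduce to a Proctor region but to an ordinary hexagon, handled by MacMahon's classical theorem; the cases $x=0$ and $b=0$ reduce, after removing forced lozenges, to $P_{c,c,a}$ and $P_{c,c,x+a}$ respectively, both covered by Theorem~\ref{Proctiling} alone.
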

We also consider a weighted version of $B_{x,a,b,c}$ as follows. We start with a halved hexagon whose northern, northeastern, southeastern, southern, and southwestern sides have lengths
$x+a,b+c,2a+c,x,b+1$, respectively, and whose western side follows a zigzag lattice path of length $a+c$. Next, we assign to each vertical lozenge along the western side weight $\frac{1}{2}$. Finally, we also remove half of a down-pointing triangle of side $2a$ from the western side as in Figure
\ref{halvedbowtie}(b) (the lozenges with shaded cores have weight $\frac{1}{2}$ as usual). Denote by $B'_{x,a,b,c}$ the resulting weighted region.

\begin{thm}\label{Bthm2} For any non-negative integers $x$, $a,$ $b$, and $c$
\begin{align}
\M(B'_{x,a,b,c})=&\M(P'_{c,c,a})\frac{\V(2x+2a+b+2,c-1,\lfloor\frac{c+1}{2}\rfloor)}{\V(2a+b+2,c-1,\lfloor\frac{c+1}{2}\rfloor)}\notag\\
&\times\frac{\T(x+1,2a+b+c-1,b)\T(x+a+\frac{b+1}{2},c,\lfloor\frac{c+1}{2}\rfloor)}{\T(1,2a+b+c-1,b)\T(a+\frac{b+1}{2},c,\lfloor\frac{c+1}{2}\rfloor)}
\end{align}
if $b$ is odd, and
\begin{align}
\M(B'_{x,a,b,c})=&\M(P'_{c,c,a})\frac{\V(2x+2a+b+1,c,\lfloor\frac{c+1}{2}\rfloor)}{\V(2a+b+1,c,\lfloor\frac{c+1}{2}\rfloor)}\notag\\
&\times\frac{\T(x+1,2a+b+c-1,b)\T(x+a+\frac{b+2}{2},c-1,\lfloor\frac{c+1}{2}\rfloor)}{\T(1,2a+b+c-1,b)\T(a+\frac{b+2}{2},c-1,\lfloor\frac{c+1}{2}\rfloor)}
\end{align}
if $b$ is even.
\end{thm}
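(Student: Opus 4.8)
\emph{Strategy of proof.} The plan is to prove Theorem~\ref{Bthm2} by induction on the parameter $x$, using Eric Kuo's graphical condensation (in the version recorded in Section~\ref{Background}) applied to the planar dual graph of the weighted region $B'_{x,a,b,c}$. Since the weight of a tiling is the product of the weights of its lozenges, Kuo condensation carries over to weighted planar graphs without change, with $\M(\cdot)$ interpreted throughout as the weighted generating function of perfect matchings; the weight $\tfrac{1}{2}$ on the vertical lozenges along the western zigzag side (Figure~\ref{halvedbowtie}(b)) merely rides along as an edge weight. The argument is run in parallel for $b$ even and $b$ odd, the two formulas in the statement differing only by the bookkeeping shifts $c\leftrightarrow c-1$ and $\tfrac{b+1}{2}\leftrightarrow\tfrac{b+2}{2}$ inside the $\T$- and $\V$-arguments. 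For the base case $x=0$ the short southern side collapses, a string of lozenges along the truncated western corner is forced, and after deleting them $B'_{0,a,b,c}$ reduces to a weighted Proctor region of shape $P'_{c,c,a}$ (plus further forced lozenges), whose tiling count is supplied by Theorem~\ref{Ciucuthm}; one also records the degenerate instances $a=0$ (no dent, a plain weighted halved hexagon, again Theorem~\ref{Ciucuthm}) and $c=0$ (no zigzag, a sub-region of a semiregular hexagon counted by MacMahon's formula \cite{Mac}) as auxiliary base cases.

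For the inductive step I would apply the four-vertex Kuo identity to the dual graph of $B'_{x,a,b,c}$ (or of the region obtained from it by attaching a forced strip of lozenges), placing two of the distinguished vertices at the two ends of the southern side of length $x$ and the other two near the top of the western zigzag path and on the northern side. With a suitable such choice, the four regions produced on the right-hand side of the condensation identity become, after removal of their forced lozenges, weighted copies of $B'_{x,a,b,c}$ and $B'_{x-1,a,b,c}$ (and, in some arrangements, $B'_{x+1,a,b,c}$), together with the product of a $P'_{c,c,a}$-type factor and a smaller dentless weighted halved hexagon counted by Theorem~\ref{Ciucuthm}. The precise placement is dictated by the requirement that no region appearing contain unequal numbers of up- and down-pointing unit triangles; this constraint, sensitive to the parities of $b$ and $c$, is the source of the several parity subcases. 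The result is a recurrence which, together with the base cases above, pins down $\M(B'_{x,a,b,c})$ for all $x$.

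It then remains to verify that the product on the right-hand side of Theorem~\ref{Bthm2} satisfies this recurrence. After dividing out the factors free of $x$ --- in particular $\M(P'_{c,c,a})$ and the denominator $\T$- and $\V$-products --- the recurrence collapses to an identity between two rational functions of $x$ of equal degree, assembled from shifted $\T$- and $\V$-products and Pochhammer symbols. Telescoping the ratios $\T(x+1,n,m)/\T(x,n,m)$ and $\V(y+1,n,m)/\V(y,n,m)$ into explicit products of linear factors reduces the check to comparing leading coefficients and finitely many special values, for which the already-settled cases $x=0$ and the degenerate small-$b$, small-$c$ cases provide more than enough evaluation points.

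The main obstacle is organizational rather than conceptual. One must (i) pin down the four Kuo vertices and the induced forced-lozenge sub-regions exactly in each parity case of $b$ and $c$, so that the right-hand side of the condensation genuinely decomposes into smaller copies of $B'$ and Ciucu's weighted trapezoid (the analogous bookkeeping for the unweighted $B_{x,a,b,c}$ of Theorem~\ref{Bthm1} serves as a guide), and (ii) push the resulting $\T$/$\V$-product identity through the floor functions $\lfloor(c+1)/2\rfloor$, which jump differently according as $c$ is even or odd and interact with the parity of $b$. Treating the even-$b$ and odd-$b$ formulas simultaneously --- they obey the same recurrence up to the shifts already noted --- is what keeps the computation in Theorem~\ref{Bthm2} (and in the companion halved-hexagon enumerations) from proliferating.
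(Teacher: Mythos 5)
Your high-level architecture --- Kuo condensation on the dual graph, induction, then an algebraic check that the product formula satisfies the resulting recurrence --- matches the paper, which proves Theorem \ref{Bthm2} exactly as it proves Theorem \ref{Bthm1}, only with the four Kuo vertices placed as in Figure \ref{halvedbowtie3}(b). But your specific inductive scheme has a gap. The paper puts both pairs of Kuo vertices at the \emph{eastern} corners (the two black unit triangles at the northeastern corner and the two at the southeastern corner); after removing forced lozenges, all six Kuo terms are again regions of the same family, and the recurrence obtained is the analogue of (\ref{Brecurrence}), whose terms carry parameters $(x,b-1,c-1)$, $(x,b,c-1)$, $(x,b-1,c)$, $(x+1,b-1,c-1)$ and $(x-1,b,c)$. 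Because one term has a \emph{larger} $x$-parameter, an induction on $x$ alone cannot close; the paper inducts on $x+b+c$, with base cases $x=0$, $b=0$, $c=0$. Your plan to induct on $x$ only therefore stands or falls on producing a genuinely different recurrence, purely in $x$, and you have not exhibited one. The placement you describe (two vertices at the ends of the southern side, one on the northern side, one at the top of the western zigzag) is not shown to return $B'$-regions after forced lozenges --- deleting a unit triangle on the weighted zigzag side near the $2a$-dent does not reproduce a region of the same shape --- and your claim that some Kuo terms factor as a $P'_{c,c,a}$-type region times a dentless halved hexagon is not what happens in the inductive step; that splitting occurs only in the base case $x=0$, where the region genuinely decomposes.

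Two smaller points. First, the verification step is under-specified: after dividing through, what must be checked is a rational-function identity in all of $x,a,b,c$, which the paper settles algebraically via Lemma \ref{trapsimpn} (reducing it to the analogue of (\ref{bcheck5})); evaluating only at $x=0$ and at small $b,c$ does not supply enough points in $x$ to conclude. Second, the paper records a cleaner route to Theorem \ref{Bthm2} that avoids a new condensation altogether: apply Ciucu's Factorization Lemma \ref{ciucuthm} to the magnet-bar region $HS\begin{pmatrix}x&y&0\\m&a&0\end{pmatrix}$ of \cite{CK}, which splits into a $B$-region and a $B'$-region; since the total count is known from \cite{CK} and the $B$-factor from Theorem \ref{Bthm1}, the $B'$-formula follows by division. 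That route is likely easier to make rigorous than a new choice of Kuo vertices.
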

Notice that when $a=0$ (i.e. there is no hole on the western side) our $B$-type region becomes a $P$-region in Proctor's Theorem \ref{Proctiling}, and our
 $B'$-region becomes a $P'$-region in Ciucu's Theorem \ref{Ciucuthm}.  This means that our above theorems can be considered as generalizations of Proctor's and Ciucu's theorems, respectively.

 \medskip

Next, we consider \emph{eight} families of `defected' halved hexagons that can be considered as further generalizations of the above $B$- and $B'$-type regions. Our new families of regions depend on $6$ parameters and are denoted by $H_i\begin{pmatrix}x&b&c\\m&a&d\end{pmatrix}$, for $i=1,2,\dots,8$. For the sake of simplicity, from now on, we use the notation $\M_i\begin{pmatrix}x&b&c\\m&a&d\end{pmatrix}$ for the number of tilings $\M\left(H_i\begin{pmatrix}x&b&c\\m&a&d\end{pmatrix}\right)$, for $i=1,2,\dots,8$.

First, we start with a halved hexagon whose northern, northeastern, southeastern, and southern sides have lengths $x+m$, $2a+b+c+2d+1$, $2m+b+c+1$, $x+a+d$,
and whose western side follows a zigzag lattice path with $m+a+b+c+d+1$ steps. Next, we remove half of an up-pointing triangle of side $2a$ at level $b$ from the bottom, and remove
 half of a down-pointing triangle of side $2m$ at level $c+d$ from the top. Finally, we remove an additional up-pointing triangle of side $d$ adjacent to the latter half triangle
  (see Figure \ref{8halvedhex2}(a)). Denote by $H_1\begin{pmatrix}x&b&c\\m&a&d\end{pmatrix}$ the resulting region.
   We also consider a variant $H_2\begin{pmatrix}x&b&c\\m&a&d\end{pmatrix}$ of the latter region by removing half of a down-pointing triangle of side $2m$ at level $c+d-\frac{1}{2}$
   from a halved hexagon with sides $x+m$, $2a+b+c+2d$, $2m+b+c$, $x+a+d$, and $m+a+b+c+d$, in clockwise order from the northern side (see Figure  \ref{8halvedhex2}(b)).

\begin{figure}\centering
\includegraphics[width=12cm]{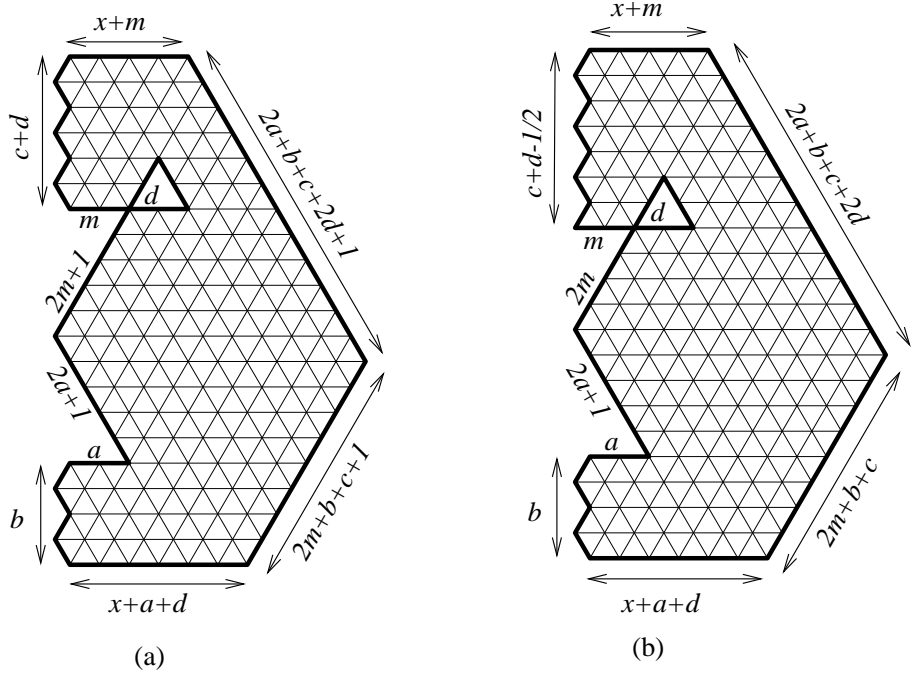}
\caption{The halved hexagons of types  (a) $H_1$  and (b) $H_2$. } \label{8halvedhex2}
\end{figure}

\begin{thm}\label{Hthm1} For nonnegative integers $x,a,b,c,d,$ and $m$, the number of tilings of the region\\ $H_1\begin{pmatrix}x&b&c\\m&a&d\end{pmatrix}$ is given by
\begin{align}\label{H1formula}
\M_1\begin{pmatrix}x&b&c\\m&a&d\end{pmatrix}&=\frac{\M(P_{c+d,c+d,m})\M(B_{d,a,2m+1,b})\M(B_{d+x,a,2m+1,b+c})}{\M(B_{d,a,2m+1,b+c})}\notag\\
&\times\frac{\T(x+1,c+d-1,d)\T(x+b+d+2m+2a+3,c+d-1,d)}{\T(1,c+d-1,d)\T(b+d+2m+2a+3,c+d-1,d)}\notag\\
&\times\frac{\T(b+d+a+\min(a,m)+2,c+|m-a|-1,m-a)}{\T(x+b+d+a+\min(a,m)+2,c+|m-a|-1,m-a)}\notag\\
&\times\frac{\T(d+m+\min(a,m)+2,c+|m-a|-1,m-a)}{\T(x+d+m+\min(a,m)+2,c+|m-a|-1,m-a)}.
\end{align}
\end{thm}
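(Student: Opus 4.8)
The plan is to establish \eqref{H1formula}, together with the analogous formulas for the companion families $H_2,\dots,H_8$, by a single simultaneous induction: Kuo's graphical condensation (in the version recalled in Section~\ref{Background}) produces the recurrences, and each inductive step reduces to an identity among Pochhammer symbols and the trapezoidal products $\T,\V$. First I would pass from the region $H_1\begin{pmatrix}x&b&c\\m&a&d\end{pmatrix}$ to its planar dual graph, so that $\M_1\begin{pmatrix}x&b&c\\m&a&d\end{pmatrix}$ becomes the number of perfect matchings of a planar bipartite graph and Kuo's theorem applies verbatim.

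For the base of the induction I would take $x=0$. The top zigzag boundary of the halved hexagon then forces a uniquely determined block of lozenges, and removing it splits the region into a disjoint union of a Proctor-type region $P_{c+d,c+d,m}$ and a $B$-type region $B_{d,a,2m+1,b}$. This is exactly what the right-hand side of \eqref{H1formula} predicts: at $x=0$ every ratio of $\T$-products involving $x$ collapses to $1$ by \eqref{Teq}, and the factor $\M(B_{d,a,2m+1,b+c})$ cancels the numerator term $\M(B_{d+x,a,2m+1,b+c})$, leaving precisely $\M(P_{c+d,c+d,m})\,\M(B_{d,a,2m+1,b})$. I would also dispose separately of the degenerate cases $a=0$ and $m=0$, where $H_1$ collapses to a $B$- or $P$-region and the claim follows directly from Theorems~\ref{Bthm1}, \ref{Bthm2}, \ref{Proctiling}, and \ref{Ciucuthm}.

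For the inductive step I would choose four unit triangles on the boundary of $H_1$ — roughly one near the southwest corner, one near the foot of the western zigzag path, and two flanking the top dent — as the distinguished vertices in Kuo condensation. Forcing the lozenges incident to the removed vertices identifies the five regions in the condensation identity: one is $H_1$ with $x$ lowered to $x-1$; one is a region of type $H_2$ (or, in the other parity classes, another $H_j$) with shifted parameters; and the remaining ones are $B$-type regions whose tiling numbers are supplied by Theorems~\ref{Bthm1} and \ref{Bthm2}. Substituting the inductively known product formulas for all five terms turns the recurrence into a rational-function identity in $x,a,b,c,d,m$, which I would verify by expanding the $\T$- and $\V$-products through \eqref{poch}, \eqref{spoch}, \eqref{Teq}, \eqref{Veq} and simplifying. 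Running the same scheme in parallel for $H_1,\dots,H_8$, with induction on, say, $x$ plus a fixed linear combination of the other parameters (so that the coupled recurrences all refer to smaller instances), completes the argument.

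The hard part will be twofold. First, the four condensation vertices must be chosen so that, after forcing, \emph{every} region that appears still lies within the eight families $H_1,\dots,H_8$ (or is a $B$- or $P$-region); a careless choice produces a region outside this list and the induction breaks. Pinning down which companion $H_j$ occurs is delicate precisely because of the half-integer ``levels'' such as $c+d-\frac{1}{2}$ that distinguish $H_1$ from $H_2$ and that migrate under the parameter changes. Second, the closing algebraic verification has to be carried out case by case according to the parities of $b$ and $c$, since these govern the floor $\lfloor\frac{c+1}{2}\rfloor$ and the choice between the two branches of Theorems~\ref{Bthm1} and \ref{Bthm2}; keeping the Pochhammer bookkeeping straight across all these cases is the real labor, while the conceptual skeleton is the standard condensation-plus-induction template.
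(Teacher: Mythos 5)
Your overall template (induction plus Kuo condensation plus a rational-function verification) matches the paper's, but two substantive pieces of your plan are wrong or missing. First, the structure you predict for the condensation identity does not occur. Applying Kuo condensation to $H_1$ does not produce a mixture of an $H_1$-region, a companion $H_j$-region, and $B$-type regions: all six regions in each Kuo identity are again $H_1$-type regions with shifted parameters (the $B$- and $P$-regions enter only through the base cases and as building blocks of the closed formula). The paper's two recurrences, one for $d>0$ and one for $a>0$, are entirely internal to the $H_1$ family, so there is no ``simultaneous induction'' across $H_1,\dots,H_8$ and no parity migration between families to control; each $H_i$ satisfies the same pair of recurrences on its own. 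Second, and more importantly, you have not said how to obtain a recurrence that decreases the hole parameters at all. The region $H_1$ is balanced, and the four-vertex balanced condensation (Lemma~\ref{Kuothm1}) applied to boundary triangles of $H_1$ itself only perturbs $x$, $b$, $c$; it cannot reach the triangular holes in the interior of the western boundary. The paper's key device is to first \emph{shrink} the $d$-hole (respectively the $a$-hole) by gluing a strip of unit triangles onto it, producing an unbalanced region with one excess up-pointing triangle, and then to apply the unbalanced version of Kuo condensation (Lemma~\ref{Kuothm2}, with three removed vertices in one class and one in the other). Without this step your induction quantity cannot decrease in $a$ or $d$ and the argument does not close.

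There is also a problem with your base cases. The paper inducts on $x+a+c+2d$ and therefore needs the cases $x=0$, $c=0$, and $a=d=0$ (the last being Ciucu's punctured halved hexagon); the recurrences produced by condensation involve $c-1$ and $d-1$, so $c=0$ must be handled separately, while $m=0$ is irrelevant because $m$ is never changed by the recurrences. Your proposed base-case list ($x=0$, $a=0$, $m=0$) does not cover the terms the recursion actually generates. Finally, a small point on your $x=0$ computation: the splitting yields $P_{m,m,c+d}$ together with an upside-down $B_{d,m,a,b}$, not literally the regions named in the formula, so even the base case requires a short identification (or an appeal to Theorems~\ref{Proctiling} and~\ref{Bthm1}) rather than the immediate cancellation you describe.
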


\begin{thm}\label{Hthm2} For nonnegative integers $x,a,b,c,d,$ and $m$, the number of tilings of the region\\ $H_2\begin{pmatrix}x&b&c\\m&a&d\end{pmatrix}$ is given by
\begin{align}\label{H2formula}
\M_2\begin{pmatrix}x&b&c\\m&a&d\end{pmatrix}&=\frac{\M(P_{c+d-1,c+d-1,m})\M(B_{d,a,2m,b})\M(B_{d+x,a,2m,b+c})}{\M(B_{d,a,2m,b+c})}\notag\\
&\times\frac{\T(x+1,c+d-1,d)\T(x+b+d+2m+2a+2,c+d-1,d)}{\T(1,c+d-1,d)\T(b+d+2m+2a+2,c+d-1,d)}\notag\\
&\times\frac{\T(b+d+a+\min(a,m)+2,c+|m-a|-1,m-a)}{\T(x+b+d+a+\min(a,m)+2,c+|m-a|-1,m-a)}\notag\\
&\times\frac{\T(d+m+\min(a,m)+1,c+|m-a|-1,m-a)}{\T(x+d+m+\min(a,m)+1,c+|m-a|-1,m-a)}.
\end{align}
\end{thm}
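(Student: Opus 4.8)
The plan is to establish Theorem~\ref{Hthm2} (and, by the same scheme, its companion Theorem~\ref{Hthm1}) by induction, with Kuo's graphical condensation supplying the inductive step and the already-proved enumerations of $P$- and $B$-type regions (Theorems~\ref{Proctiling}--\ref{Bthm2}) supplying the base cases. I would induct on the parameter $x$, passing to a lexicographic order on the remaining parameters if it turns out the condensation also lowers one of them.

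When $x=0$ the quotient $\M(B_{d+x,a,2m,b+c})/\M(B_{d,a,2m,b+c})$ equals $1$ and every $\T$-factor carrying an $x$ coincides with its own denominator, so the claimed formula collapses to $\M(P_{c+d-1,c+d-1,m})\,\M(B_{d,a,2m,b})$. I expect this to be verified geometrically: shrinking the north side of the halved hexagon to its critical length forces a band of lozenges that severs $H_2\!\begin{pmatrix}0&b&c\\m&a&d\end{pmatrix}$ into two tileable pieces --- the neighborhood of the down-pointing half-triangle of side $2m$ together with the extra $d$-triangle, which is a copy of $P_{c+d-1,c+d-1,m}$, and the lower part carrying the up-pointing half-triangle of side $2a$, which is $B_{d,a,2m,b}$ --- so that the count factors exactly as claimed. (Should forcing not split the region outright, one instead invokes a region-splitting/factorization argument, but I anticipate the forced-lozenge picture to be clean.) A few further degenerate checks --- most notably $m=0$, where $H_2$ degenerates to a $B$-type region and the formula must reproduce Theorem~\ref{Bthm1}/\ref{Bthm2} --- serve as sanity tests and, where needed, as additional base cases.

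For the inductive step I would apply Kuo condensation to $H_2\!\begin{pmatrix}x&b&c\\m&a&d\end{pmatrix}$ with a carefully chosen quadruple of boundary unit triangles (two up-pointing, two down-pointing) placed so that the four regions produced by the condensation identity each fall into an already-understood family: an $H_2$ (or $H_1$) with $x$ decreased by $1$, two $B$-type regions, and one small region whose count is immediate. Kuo's theorem then yields a relation of the form
\begin{equation*}
\M_2\!\begin{pmatrix}x&b&c\\m&a&d\end{pmatrix}\M(R_0)=\M(R_1)\,\M(R_2)+\M(R_3)\,\M(R_4),
\end{equation*}
where $\M(R_0),\dots,\M(R_4)$ are all known by the induction hypothesis and Theorems~\ref{Proctiling}--\ref{Bthm2}. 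Substituting those product formulas turns the statement into a single identity among products of Pochhammer symbols $(x)_n$, their skipped analogues $[x]_n$, and the trapezoidal products $\T(x,n,m)$, $\V(x,n,m)$; I would verify it by elementary manipulations --- the shift rule $(x)_{n+1}=x(x+1)_n$, the telescoping of ratios $\T(x,n,m)/\T(x+1,n,m)$ into Pochhammer symbols, and the corresponding relations for $[\,\cdot\,]$ and $\V$ --- after isolating, factor by factor, the part that changes when $x\mapsto x-1$.

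The main obstacle is twofold. First, \textbf{choosing the condensation vertices}: $H_2$ carries three separate defects, and the four sub-regions must remain inside the recognized families simultaneously for every parity of $c$ and both signs of $m-a$; finding one choice of vertices (or a short list of cases) that works uniformly is the delicate part, and it is presumably the source of the $\min(a,m)$ and $|m-a|$ appearing in the formula. Second, the \textbf{algebraic bookkeeping}: the right-hand side is a ratio of three $B$-type products --- each itself built from $\T$'s and $\V$'s with $\lfloor (c+1)/2\rfloor$-type truncations --- times four more $\T$-factors, so tracking the cancellations in the Kuo identity, especially how the truncations behave when a parameter crosses a parity under the recursion, demands care. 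Organizing the verification around the $x$-increment (comparing the ``$x$'' term to the ``$x-1$'' term on each side) keeps each individual computation short, and handling the residual small cases --- some parameter equal to $0$, so that a half-triangle or the $d$-triangle disappears --- rounds out the proof.
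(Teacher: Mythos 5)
Your overall strategy (induction, Kuo condensation for the recurrence, $P$- and $B$-type enumerations for the base cases, then an algebraic verification) is the right one, and your $x=0$ base case is correct: the region does split into a copy of $P_{c+d-1,c+d-1,m}$ and a $B$-type region, matching the collapsed formula. But the inductive step as you describe it has a genuine gap. You propose choosing four boundary unit triangles so that the condensation identity involves ``an $H_2$ with $x$ decreased by $1$, two $B$-type regions, and one small region whose count is immediate.'' No such choice exists in general position: the three defects (the half-triangles of sides $2a$ and $2m$ and the $d$-triangle) sit in the interior of the western boundary, and deleting unit triangles from the outer corners of the region never eliminates them by forcing, so \emph{all} of the regions appearing in the Kuo identity remain $H$-type regions with all three holes intact. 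The recurrence the paper actually uses (established for $H_1$ and asserted to persist for $H_2,\dots,H_8$) is of the form
\begin{equation*}
\M_2\begin{pmatrix}x&b&c\\m&a&d\end{pmatrix}\M_2\begin{pmatrix}x&b&c\\m&a&d-1\end{pmatrix}=\M_2\begin{pmatrix}x+1&b&c\\m&a&d-1\end{pmatrix}\M_2\begin{pmatrix}x-1&b&c\\m&a&d\end{pmatrix}+\M_2\begin{pmatrix}x&b&c+1\\m&a&d-1\end{pmatrix}\M_2\begin{pmatrix}x&b&c-1\\m&a&d\end{pmatrix},
\end{equation*}
with a companion recurrence decreasing $a$; every term is again an $H_2$-region.

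Two consequences follow. First, your induction variable is wrong: the recurrence above contains a term with $x+1$, so induction on $x$ (even lexicographically refined) does not terminate; the correct quantity is $x+a+c+2d$, which strictly decreases in every term other than the unknown, and the base cases are $x=0$, $c=0$, and $a+d=0$ (the last handled by Ciucu's enumeration), not just $x=0$. Second, you have not anticipated the technical device needed to even set up the condensation: because the four chosen unit triangles cannot all be placed on outer corners in a balanced configuration, the paper first \emph{shrinks} the $d$-hole (respectively the $a$-hole) by adding a strip of unit triangles, producing an unbalanced region, and then applies the $|V_1|=|V_2|+1$ version of Kuo condensation (Lemma \ref{Kuothm2}) with three vertices in one color class, rather than the balanced version you implicitly invoke. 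Without this step the required recurrences do not arise. The algebraic verification you outline at the end is essentially what the paper does, but it must be run against the recurrences above, in which the $B$-type factors cancel in ratios rather than appearing as standalone terms supplied by the condensation.
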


\begin{figure}\centering
\includegraphics[width=12cm]{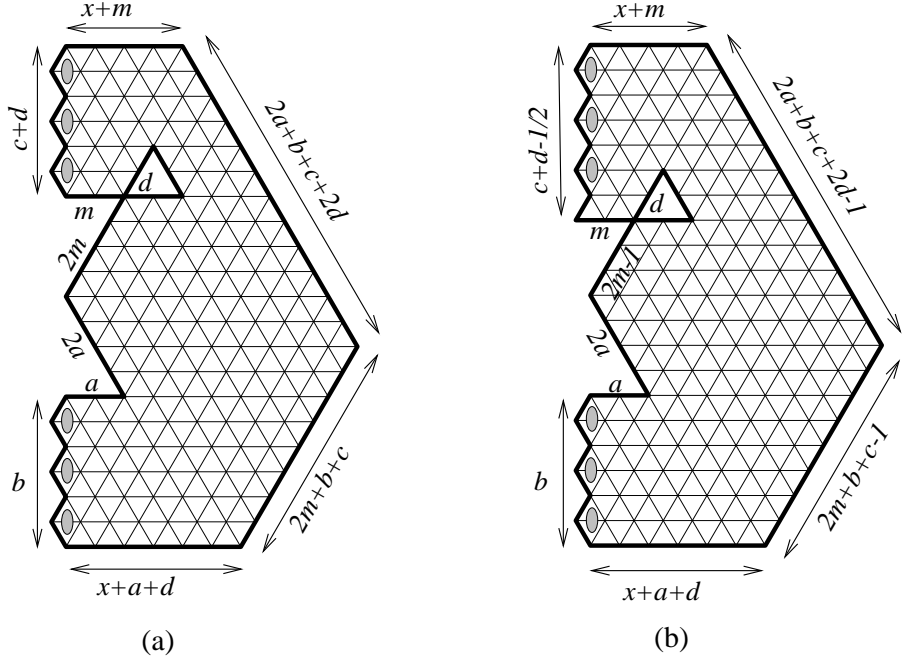}
\caption{The weighted halved hexagons of types  (a) $H_3$  and (b) $H_4$. } \label{8halvedhex3}
\end{figure}

As in the case of the $B$-type regions, we are also interested in the following two weighted versions of the above $H_1$- and $H_2$-type regions.
First, we start with a halved hexagon whose northern, northeastern, southeastern, and southern sides have lengths $x+m$, $2a+b+c+2d$, $2m+b+c$, $x+a+d$,
 and whose western side follows a zigzag lattice path with $m+a+b+c+d$ steps. We next assign to each vertical lozenge along the western side a weight $\frac{1}{2}$.
 Finally, we create three triangular holes as in the case of the $H_1$-type region. Denote by $H_3\begin{pmatrix}x&b&c\\m&a&d\end{pmatrix}$ the resulting weighted region (see Figure \ref{8halvedhex3}(a)).
   We are also interested in  a variant of the $H_3$-type region that is obtained from a halved hexagon of side-lengths $x+m$, $2a+b+c+2d-1$, $2m+b+c-1$, $x+a+d$, $2(m+a+b+c+d-1)$
    (in clockwise order from the northern side as in Figure \ref{8halvedhex3}(b)). The only difference here is that the $m$-hole is located at level $c+d-\frac{1}{2}$. Denote by $H_4\begin{pmatrix}x&b&c\\m&a&d\end{pmatrix}$ this new region.

\begin{thm}\label{Hthm3} For nonnegative integers $x,a,b,c,d,$ and $m$, the number of tilings of the region\\ $H_3\begin{pmatrix}x&b&c\\m&a&d\end{pmatrix}$ is given by
\begin{align}\label{H3formula}
\M_3\begin{pmatrix}x&b&c\\m&a&d\end{pmatrix}&=\frac{\M(P'_{c+d,c+d,m})\M(B'_{d,a,2m,b})\M(B'_{d+x,a,2m,b+c})}{\M(B'_{d,a,2m,b+c})}\notag\\
&\times\frac{\T(x+1,c+d-1,d)\T(x+b+d+2m+2a+1,c+d-1,d)}{\T(1,c+d-1,d)\T(b+d+2m+2a+1,c+d-1,d)}\notag\\
&\times\frac{\T(b+d+a+\min(a,m)+1,c+|m-a|-1,m-a)}{\T(x+b+d+a+\min(a,m)+1,c+|m-a|-1,m-a)}\notag\\
&\times\frac{\T(d+m+\min(a,m)+1,c+|m-a|-1,m-a)}{\T(x+d+m+\min(a,m)+1,c+|m-a|-1,m-a)}.
\end{align}
\end{thm}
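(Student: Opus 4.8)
The plan is to prove Theorem~\ref{Hthm3} in exact parallel with the proof of Theorem~\ref{Hthm1}: set up an inductive scheme whose inductive step is an application of Kuo's graphical condensation~\cite{Kuo} to the dual graph of the weighted region, with Theorem~\ref{Bthm2} for $B'$-type regions (hence, via the special case $a=0$, Ciucu's weighted Proctor formula, Theorem~\ref{Ciucuthm}) supplying the already-known tiling counts. I would induct on the pair $(x,c)$. The base cases are the degenerate parameter values at which $H_3\begin{pmatrix}x&b&c\\m&a&d\end{pmatrix}$ collapses to a $B'$-type region (or further, when additionally $a=0$, to a $P'$-region), so that the claimed product formula~\eqref{H3formula} can be matched against Theorem~\ref{Bthm2} (resp.\ Theorem~\ref{Ciucuthm}) by routine simplification of the $\T$- and $\V$-products, using $\T(\cdot,\cdot,0)=\V(\cdot,\cdot,0)=1$ and the conventions~\eqref{Teq}--\eqref{Veq}.

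For the inductive step I would work in the dual graph of $H_3\begin{pmatrix}x&b&c\\m&a&d\end{pmatrix}$ --- a weighted planar bipartite graph whose weighted perfect matchings are the weighted lozenge tilings --- and apply the version of Kuo condensation quoted in Section~\ref{Background} to four vertices on its outer face: the duals of four unit triangles chosen on the boundary of the region, two near the zigzag western side and the southern side (controlling $x$) and two near the top, next to the side-$2m$ half-triangle and its attached side-$d$ triangle (controlling $c$ and $d$). The point of the choice is that each region obtained by deleting the prescribed subsets of these four triangles --- as dictated by the identity $\M(G)\,\M(G-\{u,v,w,s\})=\M(G-\{u,w\})\,\M(G-\{v,s\})+\M(G-\{u,s\})\,\M(G-\{v,w\})$, once the lozenges that the deletions force have been removed --- is of a type already under control: a copy of $H_3$ or $H_4$ with $x$ or $c$ decreased, together with a handful of $B'$-type regions. (Because $H_4$ appears here, Theorem~\ref{Hthm3} and the companion statement for $H_4$ should be proved together by a simultaneous induction.) Feeding the induction hypothesis and the $B'$-formulas into the condensation identity turns it into a rational-function identity in $x,a,b,c,d,m$; after cancelling the common $\M(P'_{c+d,c+d,m})$, $\M(B'_{\cdots})$ and trapezoidal factors it collapses to a short polynomial identity that can be verified directly, and the $n<0$ conventions in~\eqref{Teq}--\eqref{Veq} let one computation cover both $m\ge a$ and $m<a$.

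The hard part will be the combinatorial bookkeeping of the condensation step rather than the algebra that follows it: one must place the four unit triangles so that \emph{all} of the regions produced are of the recognized types with their parameters --- the $\lfloor(c+1)/2\rfloor$-type floors inherited from the $B'$-regions, the $d$'s, and the quantity $m-a$ --- shifting consistently, so that the recursion is genuinely well-founded on $(x,c)$ and never wanders outside the range where Theorem~\ref{Bthm2} applies; and the weight-$\frac{1}{2}$ vertical lozenges along the western zigzag must be tracked through the forced lozenges created by the deletions, since a weighted region behaves differently under condensation than an unweighted one. Once the recurrence is pinned down, verifying that the right-hand side of~\eqref{H3formula} satisfies it is lengthy but routine manipulation of Pochhammer and trapezoidal symbols, of exactly the same character as in the proof of Theorem~\ref{Hthm1}. (One could instead hope to deduce Theorem~\ref{Hthm3} from Theorem~\ref{Hthm1} by a direct weight-transfer argument, but the accompanying parameter changes such as $2m+1\to 2m$ make the Kuo-condensation induction the cleaner route.)
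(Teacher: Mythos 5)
Your overall strategy---induction plus Kuo condensation, with the resulting rational-function identity checked against the claimed product formula---is indeed the one the paper uses: the paper explicitly proves Theorem~\ref{Hthm3} ``in the same manner as Theorem~\ref{Hthm1}.'' But the specific scheme you propose deviates from the paper's in ways that leave real gaps. First, a concrete error: the condensation identity you quote, $\M(G)\M(G-\{u,v,w,s\})=\M(G-\{u,w\})\M(G-\{v,s\})+\M(G-\{u,s\})\M(G-\{v,w\})$, is not Lemma~\ref{Kuothm1}. With $u,w\in V_1$ and $v,s\in V_2$ the correct first term is $\M(G-\{u,v\})\M(G-\{w,s\})$; the graph $G-\{u,w\}$ is unbalanced and admits no perfect matching, so the product you wrote vanishes identically and the identity as stated is false.

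Second, and more substantively, the heart of the proof is exactly the step you defer---placing the four vertices so that \emph{every} region produced is of a recognized type---and the paper resolves it differently from what you sketch. It does not apply the balanced Lemma~\ref{Kuothm1} to the dual graph of $H_3$ itself, and it does not run a simultaneous induction mixing $H_3$ with $H_4$. Instead it first modifies the region by ``shrinking'' the $d$-hole (respectively the $a$-hole), obtaining an auxiliary region whose dual graph satisfies $|V_1|=|V_2|+1$, and applies the unbalanced Lemma~\ref{Kuothm2} there; after removing forced lozenges this yields the recurrences \eqref{H1recurrence1} and \eqref{H1recurrence2}, which stay entirely within the single family $H_3$. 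The induction is on $x+a+c+2d$ (not on $(x,c)$), with base cases $x=0$, $c=0$, and $a+d=0$ handled by Lemma~\ref{RS}---splitting the region into a $P'$-region and an upside-down $B'$-region and invoking Theorems~\ref{Ciucuthm} and~\ref{Bthm2}---together with Ciucu's earlier enumeration for $a=d=0$. Without the hole-shrinking device there is no evidence that your four boundary vertices produce only regions with known formulas or that your $(x,c)$ recursion is well-founded; as written, the proposal records the right plan but omits the idea that makes the inductive step go through.
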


\begin{thm}\label{Hthm4} For nonnegative integers $x,a,b,c,d,$ and $m$, the number of tilings of the region\\ $H_4\begin{pmatrix}x&b&c\\m&a&d\end{pmatrix}$ is given by
\begin{align}\label{H4formula}
\M_4\begin{pmatrix}x&b&c\\m&a&d\end{pmatrix}&=\frac{\M(P'_{c+d-1,c+d-1,m})\M(B'_{d,a,2m-1,b})\M(B'_{d+x,a,2m-1,b+c})}{\M(B'_{d,a,2m-1,b+c})}\notag\\
&\times\frac{\T(x+1,c+d-1,d)\T(x+b+d+2m+2a,c+d-1,d)}{\T(1,c+d-1,d)\T(b+d+2m+2a,c+d-1,d)}\notag\\
&\times\frac{\T(b+d+a+\min(a,m)+1,c+|m-a|-1,m-a)}{\T(x+b+d+a+\min(a,m)+1,c+|m-a|-1,m-a)}\notag\\
&\times\frac{\T(d+m+\min(a,m),c+|m-a|-1,m-a)}{\T(x+d+m+\min(a,m),c+|m-a|-1,m-a)}.
\end{align}
\end{thm}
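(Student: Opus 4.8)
The plan is to prove Theorem~\ref{Hthm4} by induction, using Kuo's graphical condensation in its weighted form (the region $H_4\begin{pmatrix}x&b&c\\m&a&d\end{pmatrix}$ carries the weight-$\tfrac12$ vertical lozenges along its zigzag western side), following the template already used for Theorems~\ref{Hthm1}--\ref{Hthm3}. One passes to the planar dual graph $G$ of $H_4\begin{pmatrix}x&b&c\\m&a&d\end{pmatrix}$, whose weighted perfect matchings are in weight-preserving bijection with the weighted lozenge tilings counted by $\M_4\begin{pmatrix}x&b&c\\m&a&d\end{pmatrix}$. I would induct primarily on $x$ (carrying $a,b,c,d,m$ along, with a secondary induction on $c$ if needed). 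The base case $x=0$ is degenerate: a string of forced lozenges detaches the region into a copy of the weighted region $P'_{c+d-1,\,c+d-1,\,m}$ and a copy of the weighted region $B'_{d,\,a,\,2m-1,\,b}$, and, since every $\T$-ratio and the $B'$-ratio in \eqref{H4formula} equals $1$ when $x=0$, the claimed product collapses precisely to $\M(P'_{c+d-1,c+d-1,m})\,\M(B'_{d,a,2m-1,b})$; this matches Theorems~\ref{Ciucuthm} and~\ref{Bthm2}. A handful of further boundary cases (e.g. $c=0$, $d=0$, $m=0$, $a=0$) collapse similarly; note that the uniform appearance of $|m-a|$ and $\min(a,m)$ lets the ranges $m\ge a$ and $m<a$ be treated at once, because $\T(x,n,m)$ for $m<0$ is defined by reciprocation.

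For the inductive step I would choose four boundary unit triangles $u,v,w,s$ of $H_4\begin{pmatrix}x&b&c\\m&a&d\end{pmatrix}$, appearing in this cyclic order on the outer face of $G$, and apply the weighted Kuo condensation identity
\[
\M(G)\,\M(G\setminus\{u,v,w,s\})=\M(G\setminus\{u,v\})\,\M(G\setminus\{w,s\})+\M(G\setminus\{v,w\})\,\M(G\setminus\{s,u\}).
\]
The natural choice places two of $u,v,w,s$ on the zigzag western side, on either side of the removed half $2a$-triangle at level $b$, and the other two on the northeastern and southern sides, so that each of the six graphs appearing above is the dual of a region that is again of type $H_4$ with $x$ (and possibly $c$) shifted by $\pm1$, together with at most $B'$-type regions whose tiling numbers are supplied by Theorem~\ref{Bthm2}. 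This turns the identity into a recurrence for $\M_4$ in the parameter $x$. The delicate point --- and the one I expect to be the main obstacle --- is to choose $u,v,w,s$ so that this recurrence genuinely \emph{closes}: every region produced must be a bona fide member of an already-enumerated family with all parameters nonnegative, the three triangular holes must retain their correct relative positions after the shifts, and one must keep track of exactly which vertical lozenges near the cut still carry weight $\tfrac12$. This is also where the parity conventions built into the statement (the ``$2m-1$'' inside the $B'$-arguments, the floors $\lfloor\tfrac{c+1}{2}\rfloor$ hidden inside those) demand the most care.

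It then remains to check that the right-hand side of \eqref{H4formula} satisfies the recurrence obtained above. This is a routine, if lengthy, manipulation of the Pochhammer symbols $(x)_n$, the skipped Pochhammer symbols $[x]_n$, and the trapezoidal products $\T$ and $\V$, using their defining telescoping relations --- $(x)_n/(x+1)_n=x/(x+n)$, its $[\,\cdot\,]$-analogue, and the one-step recursions $\T(x,n,m)=(x)_n\,\T(x+1,n-2,m-1)$ and $\V(x,n,m)=[x]_n\,\V(x+2,n-2,m-1)$ --- so that after cancelling the common factor $\M(P'_{c+d-1,c+d-1,m})$ everything telescopes. The $\T$-ratios in \eqref{H4formula} are exactly what absorb the $x\to x\pm1$ (and $c\to c\pm1$) shifts, while the ratio $\M(B'_{d+x,a,2m-1,b+c})/\M(B'_{d,a,2m-1,b+c})$ absorbs the shift of the ``large'' parameter; I expect no essential difficulty here beyond the bookkeeping. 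As an alternative to running a fresh condensation, one could instead derive $H_4$ from the unweighted region $H_2$ --- or relate the weighted regions $H_3$ and $H_4$ directly --- via Ciucu's standard correspondence between weighted and unweighted halved-hexagon graphs; but the condensation argument above is the one that parallels the proofs of Theorems~\ref{Hthm1}--\ref{Hthm3}.
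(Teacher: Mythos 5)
Your overall template---induction, a Kuo-condensation recurrence, then an algebraic verification that the product formula satisfies it---is the same as the paper's (the paper proves Theorem~\ref{Hthm1} this way and disposes of Theorems~\ref{Hthm2}--\ref{Hthm8} by the identical argument). But the one step you yourself flag as ``the main obstacle''---choosing $u,v,w,s$ so that the recurrence closes---is precisely the step the paper resolves by a device absent from your proposal, and the variant you sketch is unlikely to work as stated. You propose applying the balanced identity of Lemma~\ref{Kuothm1} with two of the four removed triangles sitting on the weighted western zigzag, flanking the $a$-hole. The paper instead first \emph{augments} the region by shrinking the $d$-hole (when $d>0$) or the $a$-hole (when $a>0$), making the region unbalanced by one triangle, and then applies the $3{+}1$ version of Kuo condensation, Lemma~\ref{Kuothm2}, to the augmented dual graph. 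This is what produces recurrences of the shape (\ref{H1recurrence1}) and (\ref{H1recurrence2}), in which every one of the six terms is again an $H$-region of the same type with shifted parameters. Removing unit triangles directly from the weighted zigzag boundary, as you suggest, creates notches on the symmetry-axis side that do not propagate by forced lozenges back into an $H_4$-shape, so your recurrence would leave the family.

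A second, related problem is your induction scheme. The recurrences that actually arise necessarily shift $d\mapsto d-1$ together with $c\mapsto c\pm1$ (first recurrence), and $a\mapsto a-1$ together with $b\mapsto b+1$, $c\mapsto c-1$ (second recurrence); an induction ``primarily on $x$ with a secondary induction on $c$'' does not decrease under these, which is why the paper inducts on $x+a+c+2d$. Correspondingly, your list of base cases is off: the induction bottoms out when $x=0$, when $c=0$, or when $a+d=0$ (the last handled by Ciucu's earlier enumeration), whereas $d=0$ alone or $a=0$ alone does not degenerate the region and is \emph{not} a base case---indeed the two recurrences are invoked exactly according to which of $a,d$ is still positive. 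Your $x=0$ analysis (splitting into $P'_{c+d-1,c+d-1,m}$ and $B'_{d,a,2m-1,b}$, with all $x$-dependent ratios in (\ref{H4formula}) collapsing to $1$) is correct and matches the paper, and your closing remark that one could instead relate $H_4$ to an unweighted sibling via a factorization argument echoes the paper's alternative derivation of Theorem~\ref{Bthm2}; but as written the inductive engine of your proof is not yet in working order.
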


\begin{figure}\centering
\includegraphics[width=12cm]{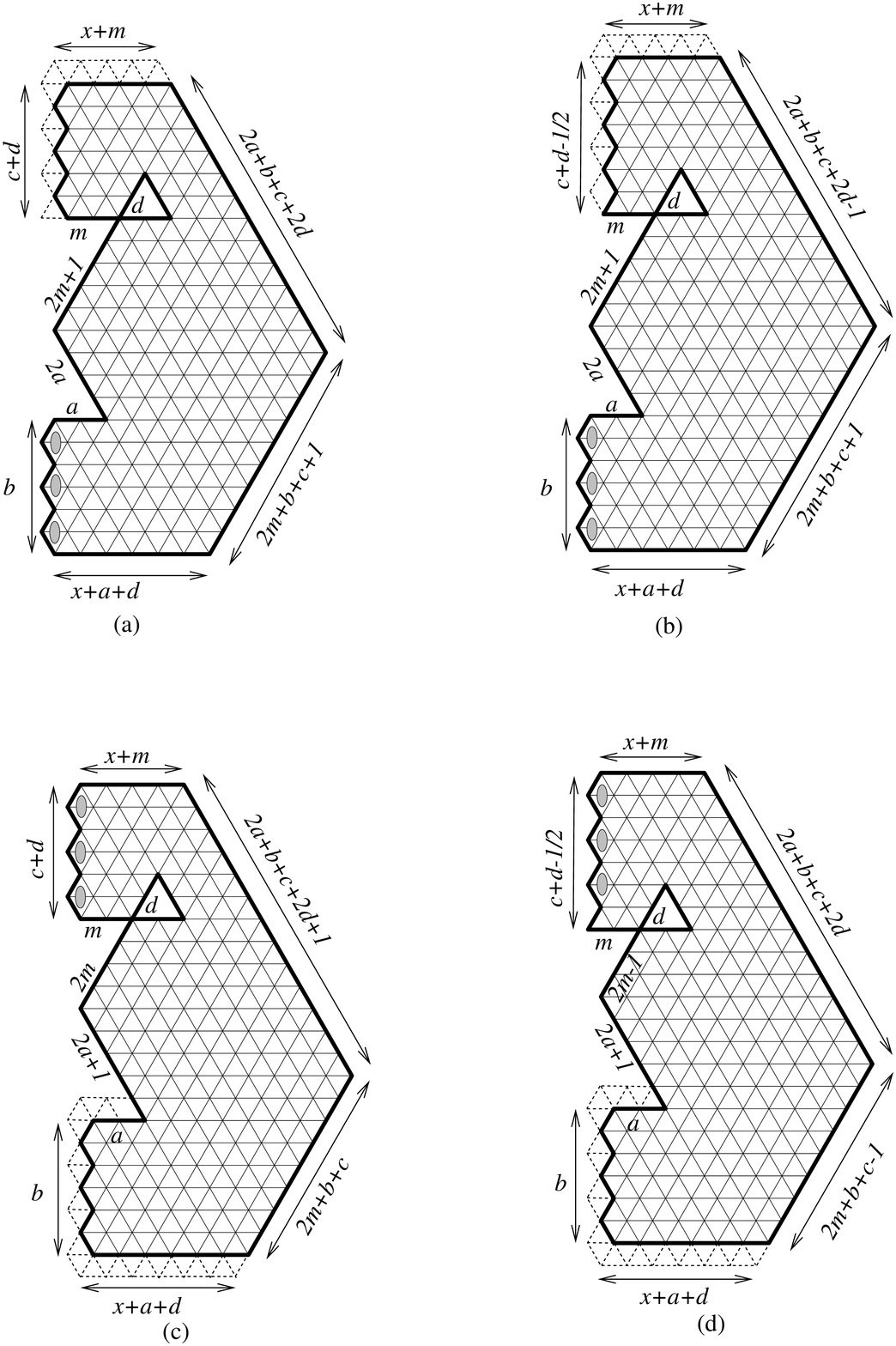}
\caption{The halved hexagons with mixed boundaries of types  (a) $H_5$, (b) $H_6$, (c) $H_7$, and (d) $H_8$. } \label{8halvedhex4}
\end{figure}

It is easy to see that the enumeration of the $B$-type regions in Theorem \ref{Bthm1} is a special case of Theorems \ref{Hthm1} and \ref{Hthm2}, when specializing $b=d=0$, and that Theorems \ref{Hthm3} and \ref{Hthm4} imply Theorem \ref{Bthm2}.

Unlike the case of the $B$- and $B'$-type regions, we have here four additional families of halved hexagons with a `mixed' western boundary. Intuitively, only half of the western boundary contains weighted vertical lozenges in these cases. The first mixed-boundary region is obtained from the weighted halved hexagon $H_4\begin{pmatrix}x&b&c+1\\m+1&a&d\end{pmatrix}$ by removing all unit triangles running along the northern side and along the portion of the western side above the $m$-hole. Denote by $H_5\begin{pmatrix}x&b&c\\m&a&d\end{pmatrix}$ the resulting region (see Figure \ref{8halvedhex4}(a)).  We consider the `sibling' $H_6\begin{pmatrix}x&b&c\\m&a&d\end{pmatrix}$ of the $H_5$-type region by removing similarly all unit triangles running along the northern and western boundary above the $m$-hole of the region $H_3\begin{pmatrix}x&b&c\\m&a&d\end{pmatrix}$ (see Figure \ref{8halvedhex4}(b)).

In the regions of types $H_5$ and $H_6$, the portion of the western boundary above the $m$-hole is unweighted,
 while the portion below the $a$-hole is weighted. We consider two more mixed-boundary regions of the `reverse' type, i.e. the upper part
 of the western boundary is  now weighted and the lower part is unweighted. We remove the unit triangles running along the southern side, the base of the $a$-hole,
 and the portion of the western side below the $a$-hole from the region $H_3\begin{pmatrix}x&b+1&c\\m&a&d\end{pmatrix}$.
 This way we get the  region $H_7\begin{pmatrix}x&b&c\\m&a&d\end{pmatrix}$ as shown in Figure \ref{8halvedhex4}(c). Finally, we remove the same unit triangles from the region $H_4\begin{pmatrix}x&b+1&c\\m&a&d\end{pmatrix}$ and obtain the region $H_8\begin{pmatrix}x&b&c\\m&a&d\end{pmatrix}$ (illustrated in Figure \ref{8halvedhex4}(d)).

\begin{thm}\label{Hthm5}  For nonnegative integers $x,a,b,c,d,$ and $m$, the number of tilings of the region\\ $H_5\begin{pmatrix}x&b&c\\m&a&d\end{pmatrix}$ is given by
\begin{align}\label{H5formula}
\M_5\begin{pmatrix}x&b&c\\m&a&d\end{pmatrix}&=\frac{\M(P_{c+d,c+d,m})\M(B'_{d,a,2m+1,b})\M(B'_{d+x,a,2m+1,b+c})}{\M(B'_{d,a,2m+1,b+c})}\notag\\
&\times \frac{\V(2d+2m+2a+3,b+c-1,c)}{\V(2x+2d+2m+2a+3,b+c-1,c)}\frac{\T(x+1,c+d-1,d)}{\T(1,c+d-1,d)}\notag\\
&\times\frac{\T(x+d+2m+2,b+c+2a-2m-2,c)}{\T(d+2m+2,b+c+2a-2m-2,c)}\notag\\
&\times\frac{\T(x+b+d+2m+2a+2,c+d-1,d)}{\T(b+d+2m+2a+2,c+d-1,d)}.
\end{align}
\end{thm}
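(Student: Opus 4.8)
The plan is to prove Theorem~\ref{Hthm5} by reducing the region $H_5\begin{pmatrix}x&b&c\\m&a&d\end{pmatrix}$ to one of the previously enumerated families via Kuo condensation and bijective/weight-preserving identifications, exactly in the spirit of how Theorems~\ref{Hthm1}--\ref{Hthm4} are presumably handled. First I would fix the combinatorial setup: since $H_5$ was \emph{defined} as the region obtained from the weighted region $H_4\begin{pmatrix}x&b&c+1\\m+1&a&d\end{pmatrix}$ by deleting all unit triangles running along the northern side and along the portion of the western side above the $m$-hole, the first step is to make this ``forced-lozenge peeling'' precise. Removing those unit triangles from the region forces a strip of lozenges in any tiling; I would check carefully that the forced region is exactly a band of unit triangles (no ambiguity), so that $\M_5\begin{pmatrix}x&b&c\\m&a&d\end{pmatrix}$ equals $\M_4\begin{pmatrix}x&b&c+1\\m+1&a&d\end{pmatrix}$ divided by nothing (the forcing contributes weight~$1$ if the forced lozenges along the boundary are all lateral/horizontal and unweighted) — or, if some forced lozenges are the weight-$\tfrac12$ vertical ones, I would track that factor. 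This identification is the conceptual heart of the argument and where an off-by-one in the side lengths is most likely to bite, so that is the step I expect to be the main obstacle.

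Next I would simply substitute the parameter shift $(b,c,m)\mapsto(b,c+1,m+1)$ into the closed form of Theorem~\ref{Hthm4}, i.e. into \eqref{H4formula}, and grind the Pochhammer/$\T$/$\V$ bookkeeping until it matches the right-hand side of \eqref{H5formula}. Concretely, in \eqref{H4formula} the leading factor becomes $\M(P'_{c+d,c+d,m+1})\M(B'_{d,a,2m+1,b})\M(B'_{d+x,a,2m+1,b+c})/\M(B'_{d,a,2m+1,b+c})$; I would rewrite $\M(P'_{c+d,c+d,m+1})$ using Ciucu's Theorem~\ref{Ciucuthm} to peel off the ``extra'' index and produce the $\M(P_{c+d,c+d,m})$ of \eqref{H5formula} together with an explicit product of linear factors. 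Those linear factors, combined with the shift in the two $\T$-blocks on lines three and four of \eqref{H4formula} (where $|m-a|$ and $\min(a,m)$ change under $m\mapsto m+1$ — here I would split into the cases $m\geq a$ and $m<a$ to handle the absolute value and the min cleanly), should reorganize into the $\V$-ratio $\V(2d+2m+2a+3,b+c-1,c)/\V(2x+2d+2m+2a+3,b+c-1,c)$ and the $\T$-ratio $\T(x+d+2m+2,b+c+2a-2m-2,c)/\T(d+2m+2,b+c+2a-2m-2,c)$ that are new in \eqref{H5formula}. The second line's $\T(x+1,c+d-1,d)/\T(1,c+d-1,d)$ and the last line's $\T(x+b+d+2m+2a+2,c+d-1,d)/\T(b+d+2m+2a+2,c+d-1,d)$ should transfer essentially verbatim from the corresponding $c+d-1$-blocks of \eqref{H4formula} after the index shift, up to relabelling.

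Throughout, the recurring technical tools are: (i) the telescoping identity $(x)_{n}=(x)_{k}(x+k)_{n-k}$ and its $[\cdot]_n$ analogue, which lets one split a $\T$- or $\V$-product at an interior index and is what converts ``a region with one extra row'' into ``the base region times an explicit product''; (ii) the case analysis on the sign of $m-a$, needed because $\T(\cdot,\cdot,m-a)$ with a negative third argument is an inverse product per \eqref{Teq}; and (iii) careful matching of how the weight-$\tfrac12$ vertical lozenges are distributed — since $H_5$ has a \emph{mixed} boundary (weighted below the $a$-hole, unweighted above the $m$-hole), I would verify that exactly the right power of $2$ survives, most easily by checking the identity at a small numerical instance (say $x=a=b=c=d=0$, $m$ small, and then $m=a=1$) to catch any stray factor of~$2$ before committing to the general algebraic manipulation. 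If the direct reduction to Theorem~\ref{Hthm4} turns out to introduce forced \emph{vertical} (weighted) lozenges that spoil the clean identity, the fallback is to run Kuo condensation directly on $H_5$ — choosing the four distinguished boundary unit triangles at the two ``corners'' adjacent to the $m$-hole and the two near the $a$-hole — to obtain a recurrence expressing $\M_5$ in terms of $\M_5$ with smaller parameters together with $\M$ of $B'$- and $P$-type regions, and then close the induction using Theorems~\ref{Bthm2} and~\ref{Ciucuthm}; but I expect the peeling argument to be the shorter route.
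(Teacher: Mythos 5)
Your primary route rests on a false premise. The passage from $H_4\begin{pmatrix}x&b&c+1\\m+1&a&d\end{pmatrix}$ to $H_5\begin{pmatrix}x&b&c\\m&a&d\end{pmatrix}$ by ``removing all unit triangles running along the northern side and along the portion of the western side above the $m$-hole'' is a \emph{construction of a new region with a modified (free) boundary}, not a forced-lozenge peeling: the down-pointing unit triangles along the straight northern side of the $H_4$-region each have two neighbours inside the region, so they are not forced to pair in a unique way, and deleting them changes the set of tilings rather than restricting it bijectively. Hence $\M_5$ is \emph{not} $\M_4$ with shifted parameters up to a monomial weight. A small instance makes this concrete: for $x=1$, $c=1$, $a=b=d=m=0$, formula \eqref{H5formula} gives $\M_5\begin{pmatrix}1&0&1\\0&0&0\end{pmatrix}=2$, whereas \eqref{H4formula} gives $\M_4\begin{pmatrix}1&0&2\\1&0&0\end{pmatrix}=\tfrac{5}{2}$; the discrepancy $\tfrac{4}{5}$ is not a power of $\tfrac{1}{2}$, so no accounting of weighted forced lozenges can repair the identification. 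This is exactly why the mixed-boundary regions $H_5,\dots,H_8$ carry their own, structurally different, product formulas instead of being corollaries of Theorems \ref{Hthm3} and \ref{Hthm4}. (There is also a slip in your own substitution: under $(c,m)\mapsto(c+1,m+1)$ the factors $\M(B'_{d+x,a,2m-1,b+c})$ and $\M(B'_{d,a,2m-1,b+c})$ become $\M(B'_{d+x,a,2m+1,b+c+1})$ and $\M(B'_{d,a,2m+1,b+c+1})$, not the $b+c$ versions appearing in \eqref{H5formula}.)

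Your fallback is, in substance, the paper's actual proof: one shows that $\M_5$ satisfies the same two Kuo recurrences \eqref{H1recurrence1} (shrinking the $d$-hole when $d>0$) and \eqref{H1recurrence2} (shrinking the $a$-hole when $a>0$) as $\M_1$, handles the base cases $x=0$, $c=0$, $a+d=0$ by splitting the region via Lemma \ref{RS} into a $P$-type piece and an upside-down $B'$-type piece, and then verifies with Lemma \ref{trapsimpn} that the right-hand side of \eqref{H5formula} satisfies the recurrences. But as written the fallback is a one-sentence sketch: you do not derive the recurrence, check that the four removals return regions of type $H_5$ (rather than of another type), or identify the $B'$- and $P$-regions arising in the base cases. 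Since the main argument fails and the viable alternative is undeveloped, the proposal has a genuine gap.
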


\begin{thm}\label{Hthm6} For nonnegative integers $x,a,b,c,d,$ and $m$, the number of tilings of the region\\ $H_6\begin{pmatrix}x&b&c\\m&a&d\end{pmatrix}$ is given by
\begin{align}\label{H6formula}
\M_6\begin{pmatrix}x&b&c\\m&a&d\end{pmatrix}&=\frac{\M(P_{c+d-1,c+d-1,m})\M(B'_{d,a,2m,b})\M(B'_{d+x,a,2m,b+c})}{\M(B'_{d,a,2m,b+c})}\notag\\
&\frac{\V(2d+2m+2a+1,b+c,c)}{\V(2x+2d+2m+2a+1,b+c,c)}\frac{\T(x+1,c+d-1,d)}{\T(1,c+d-1,d)}\notag\\
&\times\frac{\T(x+d+2m+1,b+c+2a-2m-1,c)}{\T(d+2m+1,b+c+2a-2m-1,c)}\notag\\
&\times\frac{\T(x+b+d+2m+2a+1,c+d-1,d)}{\T(b+d+2m+2a+1,c+d-1,d)}.
\end{align}
\end{thm}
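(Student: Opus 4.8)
The plan is to establish \eqref{H6formula} not in isolation but as one case of a single simultaneous induction proving Theorems \ref{Hthm1}--\ref{Hthm8} together, the induction being on the total size (number of unit triangles) of the region. Kuo's graphical condensation, in the version recalled in Section \ref{Background}, supplies the recurrences; each recurrence expresses $\M_i\begin{pmatrix}x&b&c\\m&a&d\end{pmatrix}$ (here $i=6$) in terms of several of the quantities $\M_j$ with strictly smaller parameters together with regions that have already been evaluated in closed form. The base cases are the degenerations: when one of $a,m,d,c$ vanishes, the region $H_6\begin{pmatrix}x&b&c\\m&a&d\end{pmatrix}$ collapses (after removing uniquely forced lozenges, in particular the staircase that is forced when the unit triangles along the northern side and along the western boundary above the $m$-hole are deleted from $H_3\begin{pmatrix}x&b&c\\m&a&d\end{pmatrix}$) to a $B'$-region, a $P$- or $P'$-region, or a region of type $H_3$ or $H_5$, whence \eqref{H6formula} follows from Theorems \ref{Bthm2}, \ref{Proctiling}, \ref{Ciucuthm}, \ref{Hthm3}, \ref{Hthm5}. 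Because $|m-a|$ and $\min(a,m)$ occur in \eqref{H6formula}, the induction is run separately in the two regimes $m\ge a$ and $m<a$, and, as in Theorems \ref{Bthm1}--\ref{Bthm2}, one keeps track of the parity of the parameter playing the role of ``$b$'' inside the $B'$-factors.

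For the inductive step on $H_6$, I would apply Kuo condensation to the planar bipartite graph dual to $H_6\begin{pmatrix}x&b&c\\m&a&d\end{pmatrix}$, choosing the four distinguished unit triangles --- two up-pointing, two down-pointing --- on the boundary: near the top-left corner where the flat northern side meets the western zigzag, near the bottom-left corner where the southern side meets the zigzag, and on the flat northern and southern sides. The point of this placement is that, after deleting the lozenges that each choice forces, four of the five resulting regions become regions of type $H_6$ (or one of the other $H_i$) with one of $x,c,d$ lowered by $1$, while the fifth --- the region with all four triangles removed --- factors as a smaller $H$-region times a region already enumerated (a $B'$-region, a $P$/$P'$-region, or an $H_3$/$H_5$-region). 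Substituting the conjectured product formulas into every term turns the Kuo identity into a rational-function identity in $x,a,b,c,d,m$ whose only ingredients are the Pochhammer symbols $(x)_n,[x]_n$ and the trapezoidal products $\T,\V$; this identity is then verified by the same telescoping-and-cancellation manipulations used throughout the paper (factoring each trapezoidal product into its ``rows'' and matching them off one by one).

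The hard part will be the combinatorial design rather than the algebra: one must find a single placement of the four triangles --- possibly a different one in the regime $m<a$ --- for which the recurrence genuinely closes up inside the eight families $H_1,\dots,H_8$ together with the already-evaluated $B$-, $B'$-, $P$- and $P'$-regions, and one must correctly account for the weight $\tfrac12$ carried by each vertical lozenge that gets forced off when the five Kuo regions are reduced to standard form, since these weights are precisely what convert the $P'$/$B'$ ingredients underlying $H_3$ into the $P$/$B'$ ingredients that appear in \eqref{H6formula}. Once the recurrence and the base cases are secured, the verification of \eqref{H6formula} is mechanical and entirely parallel to the arguments for Theorems \ref{Bthm1}--\ref{Hthm5}.
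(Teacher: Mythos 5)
Your proposal correctly identifies the general strategy the paper uses (induction plus Kuo condensation, with the product formula verified against the resulting recurrence via Lemma \ref{trapsimpn}), but it leaves the decisive step unresolved and, more importantly, misses the specific device that makes the induction work. You propose applying the \emph{balanced} Kuo identity (Lemma \ref{Kuothm1}) to the dual graph of $H_6$ itself, with all four distinguished triangles on the outer boundary (corners and flat sides). Such a placement can only lower the boundary parameters $x$, $b$, $c$; it cannot touch the hole parameters $a$ and $d$. The paper instead first \emph{modifies} the region by shrinking the $d$-hole (or the $a$-hole) --- adding a strip of unit triangles along it, which makes the region unbalanced with one extra up-pointing triangle --- and then applies the unbalanced identity, Lemma \ref{Kuothm2}, with three of the four chosen vertices in one colour class, two of them sitting at the shrunken hole. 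This is what produces the recurrences (\ref{H1recurrence1}) and (\ref{H1recurrence2}), which decrease $d$ and $a$ respectively, stay entirely within the single family $H_6$ (the paper states these same recurrences hold for all $H_i$, $i=2,\dots,8$), and terminate at the base cases $x=0$, $c=0$, $a+d=0$, where the region splits by Lemma \ref{RS} into a $P$- or $P'$-piece and a $B$- or $B'$-piece. No simultaneous induction over $H_1,\dots,H_8$ and no case split on $m\ge a$ versus $m<a$ is needed at the level of the recurrence.

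Beyond this, your write-up explicitly defers the essential point: you say one ``must find a single placement of the four triangles \dots for which the recurrence genuinely closes up'' without exhibiting one, and your list of base cases (one of $a,m,d,c$ vanishing, omitting $x=0$) is inconsistent with a recurrence driven by boundary triangles only, which would never reduce $a$, $m$, or $d$. As it stands the proposal is a plan with the key combinatorial construction missing, so it does not yet constitute a proof; supplying the hole-shrinking step and the resulting two recurrences would close the gap and bring it in line with the paper's argument.
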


\begin{thm}\label{Hthm7} For nonnegative integers $x,a,b,c,d,$ and $m$, the number of tilings of the region\\ $H_7\begin{pmatrix}x&b&c\\m&a&d\end{pmatrix}$ is given by
\begin{align}\label{H7formula}
\M_7\begin{pmatrix}x&b&c\\m&a&d\end{pmatrix}&=\frac{\M(P'_{c+d,c+d,m})\M(B_{d,a,2m,b})\M(B_{d+x,a,2m,b+c})}{\M(B_{d,a,2m,b+c})}\notag\\
&\frac{\V(2d+2m+2a+3,b+c-1,c)}{\V(2x+2d+2m+2a+3,b+c-1,c)}\frac{\T(x+1,c+d-1,d)}{\T(1,c+d-1,d)}\notag\\
&\times\frac{\T(x+d+2m+1,b+c+2a-2m,c)}{\T(d+2m+1,b+c+2a-2m,c)}\notag\\
&\times\frac{\T(x+b+d+2m+2a+2,c+d-1,d)}{\T(b+d+2m+2a+2,c+d-1,d)}.
\end{align}
\end{thm}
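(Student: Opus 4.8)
The plan is to prove Theorem~\ref{Hthm7} by the same strategy that underlies the preceding theorems, namely Kuo condensation combined with induction. First I would set up the recurrence: apply Kuo's graphical condensation (the version quoted in Section~\ref{Background}) to the planar dual graph of the region $H_7\begin{pmatrix}x&b&c\\m&a&d\end{pmatrix}$, choosing the four distinguished boundary vertices so that the five regions appearing in the condensation identity are again regions of the eight families $H_1,\dots,H_8$ (or $B$-, $B'$-, $P$-, $P'$-type regions) with the parameters shifted by small amounts. The natural choice, dictated by the mixed-boundary structure of $H_7$ (weighted upper western side, unweighted lower western side, with triangles stripped along the southern side and the base of the $a$-hole), is to place the condensation vertices at the corners near the $a$-hole so that contracting/expanding the bottom $x$-strip reduces $x$ by one while the auxiliary terms are $H_7$-type with $c$ or $d$ decremented, plus $B$-type halved hexagons that are already evaluated in Theorem~\ref{Bthm1}. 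Equivalently, one can realize $H_7$ as a limit or forced-tiling specialization of $H_3$ (as indicated in the text: $H_7\begin{pmatrix}x&b&c\\m&a&d\end{pmatrix}$ comes from $H_3\begin{pmatrix}x&b+1&c\\m&a&d\end{pmatrix}$ by deleting forced unit triangles) and deduce the formula directly from Theorem~\ref{Hthm3}; I would check whether the forced-lozenge argument alone suffices, since that would make the proof nearly immediate.

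Assuming the forced-lozenge reduction does not cleanly close (because deleting the triangles along the southern side and the $a$-hole base is not a pure forcing in all parity cases), the main line is induction. The base cases are $x=0$, where the region degenerates to one already covered — a $B$-type or $B'$-type region, or an $H_5/H_6$-type region with one fewer free parameter — and small values of $c$ and $d$, where the $\T$- and $\V$-products collapse by the conventions in \eqref{Teq}--\eqref{Veq} and the claim reduces to an identity among $B$-region tiling numbers that follows from Theorem~\ref{Bthm1}. For the inductive step I would verify that the right-hand side of \eqref{H7formula}, as an explicit product of Pochhammer-type factors, satisfies the very same Kuo-condensation recurrence: this is the routine-but-lengthy part, amounting to checking a rational-function identity obtained by dividing the proposed formulas for the five regions, where almost everything telescopes and only a handful of linear factors near the "active" boundary need to be matched. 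The bookkeeping is eased by the trapezoidal-product notation, whose shift relations (e.g.\ $\T(x+1,n,m)/\T(x,n,m)$ reducing to a ratio of two ordinary Pochhammer symbols) convert each condensation identity into a short product of linear terms.

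The hard part will be organizing the parity and $\min(a,m)$ case distinctions without a combinatorial explosion: the factor $\T(\cdot,c+|m-a|-1,m-a)$ behaves differently according to $m\gtrless a$, and the southern-side/$a$-hole geometry of $H_7$ forces separate treatment of $b$ even versus odd at the level of which $B$-region formula \eqref{Beq1} or \eqref{Beq2} to substitute. I would handle this by proving the recurrence once in a "generic" parameter range and then checking that both sides extend consistently to the boundary cases via the analytic continuation built into the definitions \eqref{poch}--\eqref{Veq} (negative-index Pochhammer and trapezoidal products), so that a single algebraic identity covers all parities. A secondary obstacle is ensuring that every region produced by the chosen condensation really lies in the catalogued families — i.e.\ that the distinguished vertices can be placed so that no "new" region type appears; if an unwanted type shows up, I would instead iterate condensation (a two-step or composite condensation) or fall back on combining Theorem~\ref{Hthm3} with the region-decomposition lemmas of Section~\ref{Background}, at the cost of a longer but still elementary argument.
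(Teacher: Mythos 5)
Your proposal matches the paper's approach: the authors prove Theorem \ref{Hthm7} (along with Theorems \ref{Hthm2}--\ref{Hthm8}) exactly as they prove Theorem \ref{Hthm1}, by induction on $x+a+c+2d$ using Kuo condensation, noting that the recurrences (\ref{H1recurrence1}) and (\ref{H1recurrence2}) hold for all the $H_i$-type regions and then verifying that the product formula satisfies them via the shift relations of Lemma \ref{trapsimpn}. Your instinct that the passage from $H_3$ to $H_7$ is a definition rather than a forced-lozenge reduction is also correct, so the inductive argument is indeed the right main line.
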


\begin{thm}\label{Hthm8} For nonnegative integers $x,a,b,c,d,$ and $m$, the number of tilings of the region\\ $H_8\begin{pmatrix}x&b&c\\m&a&d\end{pmatrix}$ is given by
\begin{align}\label{H8formula}
\M_8\begin{pmatrix}x&b&c\\m&a&d\end{pmatrix}&=\frac{\M(P'_{c+d-1,c+d-1,m})\M(B_{d,a,2m-1,b})\M(B_{d+x,a,2m-1,b+c})}{\M(B_{d,a,2m-1,b+c})}\notag\\
&\times\frac{\V(2d+2m+2a+1,b+c,c)}{\V(2x+2d+2m+2a+1,b+c,c)}\frac{\T(x+1,c+d-1,d)}{\T(1,c+d-1,d)}\notag\\
&\times\frac{\T(x+d+2m,b+c+2a-2m+1,c)}{\T(d+2m,b+c+2a-2m+1,c)}\notag\\
&\times\frac{\T(x+b+d+2m+2a+1,c+d-1,d)}{\T(b+d+2m+2a+1,c+d-1,d)}.
\end{align}
\end{thm}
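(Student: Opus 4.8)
The plan is to follow the same strategy used for Theorems \ref{Hthm1}--\ref{Hthm7}, adapted to the ``reverse mixed boundary'' type $H_8$. Recall that $H_8\begin{pmatrix}x&b&c\\m&a&d\end{pmatrix}$ is by definition obtained from $H_4\begin{pmatrix}x&b+1&c\\m&a&d\end{pmatrix}$ by deleting the unit triangles running along the southern side, along the base of the $a$-hole, and along the portion of the western side below the $a$-hole. Deleting a row of boundary unit triangles is a forced-lozenge move: those triangles lie in forced positions, so their removal does not change the tiling count, but it does shift parameters and ``unweight'' the lower strip of the western boundary. First I would make this reduction precise, showing $\M_8\begin{pmatrix}x&b&c\\m&a&d\end{pmatrix}=\M_4\begin{pmatrix}x&b+1&c\\m&a&d\end{pmatrix}$ up to an explicit monomial correction coming from the weights $\tfrac12$ carried by the forced vertical lozenges along the now-removed lower western strip (there are $a$ such lozenges in the half-triangle of side $2a$), so the correction factor is $2^{a}$-type; I would pin down its exact form by tracking the weighted forced lozenges carefully against Figure \ref{8halvedhex3}(b) and Figure \ref{8halvedhex4}(d).

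Next I would substitute the closed form for $\M_4$ from Theorem \ref{Hthm4} with $b$ replaced by $b+1$ and simplify. This is where the bulk of the work lies: the right-hand side of \eqref{H4formula} involves $\M(P'_{c+d-1,c+d-1,m})$, three $\M(B'_{\,\cdot\,,a,2m-1,\,\cdot\,})$ factors, and four $\T$-quotients, and I must massage these into the right-hand side of \eqref{H8formula}, which instead features $\M(P'_{c+d-1,c+d-1,m})$ (unchanged), three $\M(B_{\,\cdot\,,a,2m-1,\,\cdot\,})$ factors (\emph{unweighted} $B$ rather than $B'$), a $\V$-quotient, and $\T$-quotients with shifted arguments. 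The passage from $B'$ to $B$ here is exactly the effect of removing the weighted boundary strip below the $a$-hole inside each halved-hexagon building block, so at the level of formulas I would invoke Theorems \ref{Bthm1} and \ref{Bthm2} to rewrite each $\M(B')$ as $\M(B)$ times an explicit ratio of $\T$- and $\V$-products; after cancelling the common $\M(P'_{c+d-1,c+d-1,m})$ and the $B$-factors against their counterparts, what remains is a pure identity among Pochhammer-type products $(\,\cdot\,)_n$, $[\,\cdot\,]_n$, $\T$, $\V$. I would then verify that identity by splitting into the cases dictated by the parity of $b$ (since $b\mapsto b+1$ flips the parity, the odd/even branches of \eqref{H4formula} get interchanged) and by the sign of $m-a$ (which governs the direction of the $\T(\cdots,c+|m-a|-1,m-a)$ factors), reducing everything to telescoping products of linear factors via the defining recursions \eqref{poch}--\eqref{Veq}.

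The main obstacle I expect is bookkeeping rather than conceptual: making sure every parameter shift induced by the three rows of removed unit triangles is accounted for consistently across all of $H_4$'s constituent pieces (the $P'$-block, the three $B'$-blocks, and the four trapezoidal quotients), and that the weight-$\tfrac12$ lozenges are neither double-counted nor dropped when one boundary strip is unweighted while another stays weighted. A good internal check, which I would use to catch sign or off-by-one errors, is to specialize: setting $d=0$ and $b=0$ should collapse $H_8$ to a (weighted-on-one-side) $B$-type region whose count is already known from Theorems \ref{Bthm1}--\ref{Bthm2}, and setting $a=m$ should kill all the $\T(\cdots,c+|m-a|-1,m-a)$ factors (empty products equal to $1$), drastically simplifying both sides. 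Once the reduction $\M_8=2^{(\cdot)}\,\M_4(b\to b+1)$ is established and the resulting product identity is verified in each parity/sign case, the theorem follows.
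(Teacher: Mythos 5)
There is a genuine gap, and it sits at the very first step. Your entire argument rests on the claim that passing from $H_4\begin{pmatrix}x&b+1&c\\m&a&d\end{pmatrix}$ to $H_8\begin{pmatrix}x&b&c\\m&a&d\end{pmatrix}$ by deleting the unit triangles along the southern side, the base of the $a$-hole, and the lower western boundary is a forced-lozenge move, so that $\M_8$ equals $\M_4(b\mapsto b+1)$ up to a monomial ($2^a$-type) correction. That is false: the deleted strip is not covered by forced lozenges. Removing a layer of boundary unit triangles is precisely how the authors manufacture the ``mixed'' western boundary---it changes which lozenge positions are available along that whole portion of the boundary and turns the weighted strip into an unweighted one---and the tilings of the smaller region are not in weight-preserving bijection with those of the larger one. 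You can see the failure concretely by setting $x=0$, where every $x$-dependent quotient in \eqref{H8formula} and \eqref{H4formula} equals $1$: the two formulas collapse to $\M(P'_{c+d-1,c+d-1,m})\M(B_{d,a,2m-1,b})$ and $\M(P'_{c+d-1,c+d-1,m})\M(B'_{d,a,2m-1,b+1})$ respectively, and by Theorems \ref{Bthm1} and \ref{Bthm2} the ratio $\M(B_{d,a,2m-1,b})/\M(B'_{d,a,2m-1,b+1})$ is a nontrivial rational function of $a,b,d,m$ (already for $d=0$ it is $\M(P_{b,b,a})/\M(P'_{b+1,b+1,a})$, which is not a power of $2$). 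More structurally, \eqref{H8formula} carries the quotients $\V(2d+2m+2a+1,b+c,c)/\V(2x+2d+2m+2a+1,b+c,c)$ and $\T(x+d+2m,b+c+2a-2m+1,c)/\T(d+2m,b+c+2a-2m+1,c)$, with $c$ in the third argument, whereas \eqref{H4formula} instead carries the $\T(\cdots+\min(a,m),\,c+|m-a|-1,\,m-a)$ quotients; no monomial correction converts one shape into the other. Since the reduction fails, the subsequent formula manipulation (trading $B'$ for $B$ via Theorems \ref{Bthm1}--\ref{Bthm2} and verifying a product identity) has nothing to verify.

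For the record, the paper proves Theorem \ref{Hthm8} exactly as it proves Theorem \ref{Hthm1}: by induction on $x+a+c+2d$, using Kuo condensation (Lemma \ref{Kuothm2}) to show that $\M_8$ satisfies the same two recurrences \eqref{H1recurrence1} and \eqref{H1recurrence2}, checking via Lemma \ref{trapsimpn} that the product formula \eqref{H8formula} satisfies those recurrences, and handling the base cases $x=0$, $c=0$, $a+d=0$ by region-splitting (Lemma \ref{RS}) into $P'$- and $B$-type pieces. If you want to complete a proof, that is the route to take; there is no shortcut through $H_4$.
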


We now are ready to state our main result for a symmetric hexagon with a shamrock missing on the symmetry axis. Consider a symmetric hexagon of sides $y+a+2b,x+m,y+a+2b,y+m,x+a+2b,y+m$.
 We remove a symmetric shamrock $S_{m,a,b,b}$ along the symmetry axis of the hexagon, such that the distance between the base of the hexagon and the base of the removed shamrock is $z$.
  Denote by $HS\begin{pmatrix}x&y&z\\m&a&b\end{pmatrix}$ the resulting region (see Figure \ref{symmetricshamrock}). It is easy to see that, by the symmetry, if the lengths of the base of the shamrock and the southern side of the hexagon have same parity, then $z$ is even. This means that $x$ and $z$ always have the same parity.
   The tiling number of the $HS$-type region is given in terms of the above halved hexagons $H_i$ in $8$ different cases, depending on the parities of $x,m,a$.

\begin{thm}\label{mainthm} Assume that $x,y,z,a,b,m$ are non-negative integers. 

(a) If $a$ and $m$ are both odd, then
\begin{align}
\M\left(HS\begin{pmatrix}x&y&z\\m&a&b\end{pmatrix}\right)=2^{y+b}\M_1\begin{pmatrix}\frac{x+m-1}{2}&\frac{z}{2}&y-\frac{z}{2}\\\frac{m-1}{2}&\frac{a-1}{2}&b\end{pmatrix}\M_4\begin{pmatrix}\frac{x+m+1}{2}&\frac{z}{2}-1&y-\frac{z}{2}\\\frac{m+1}{2}&\frac{a+1}{2}&b\end{pmatrix}
\end{align} if $x$ is even (so $z$ is even); and
\begin{align}
\M\left(HS\begin{pmatrix}x&y&z\\m&a&b\end{pmatrix}\right)=2^{y+b}\M_1\begin{pmatrix}\frac{x+m}{2}&\frac{z-1}{2}&y-\frac{z-1}{2}-1\\\frac{m-1}{2}&\frac{a-1}{2}&b\end{pmatrix}\M_4\begin{pmatrix}\frac{x+m}{2}&\frac{z-1}{2}&y-\frac{z-1}{2}\\\frac{m+1}{2}&\frac{a+1}{2}&b\end{pmatrix}
\end{align} if $x$ is odd (so $z$ is odd).

(b)  If $a$ and $m$ are both even, then
\begin{align}
\M\left(HS\begin{pmatrix}x&y&z\\m&a&b\end{pmatrix}\right)=2^{y+b}\M_2\begin{pmatrix}\frac{x+m}{2}&\frac{z}{2}-1&y-\frac{z}{2}\\\frac{m}{2}&\frac{a}{2}&b\end{pmatrix}\M_3\begin{pmatrix}\frac{x+m}{2}&\frac{z}{2}&y-\frac{z}{2}\\\frac{m}{2}&\frac{a}{2}&b\end{pmatrix}
\end{align} if $x$ is even (so $z$ is even); and
\begin{align}
\M\left(HS\begin{pmatrix}x&y&z\\m&a&b\end{pmatrix}\right)=2^{y+b}\M_2\begin{pmatrix}\frac{x+m-1}{2}&\frac{z-1}{2}&y-\frac{z-1}{2}\\\frac{m}{2}&\frac{a}{2}&b\end{pmatrix}\M_3\begin{pmatrix}\frac{x+m+1}{2}&\frac{z-1}{2}&y-\frac{z-1}{2}-1\\\frac{m}{2}&\frac{a}{2}&b\end{pmatrix}
\end{align} if $x$ is odd (so $z$ is odd).

(c)  If $a$ is even and $m$ is odd, then
\begin{align}
\M\left(HS\begin{pmatrix}x&y&z\\m&a&b\end{pmatrix}\right)=2^{y+b}\M_5\begin{pmatrix}\frac{x+m-1}{2}&\frac{z}{2}&y-\frac{z}{2}\\\frac{m-1}{2}&\frac{a}{2}&b\end{pmatrix}\M_8\begin{pmatrix}\frac{x+m+1}{2}&\frac{z}{2}-1&y-\frac{z}{2}\\\frac{m+1}{2}&\frac{a}{2}&b\end{pmatrix}
\end{align} if $x$ is even (so $z$ is even); and
\begin{align}
\M\left(HS\begin{pmatrix}x&y&z\\m&a&b\end{pmatrix}\right)=2^{y+b}\M_5\begin{pmatrix}\frac{x+m}{2}&\frac{z-1}{2}&y-\frac{z-1}{2}-1\\\frac{m-1}{2}&\frac{a}{2}&b\end{pmatrix}\M_8\begin{pmatrix}\frac{x+m}{2}&\frac{z-1}{2}&y-\frac{z-1}{2}\\\frac{m+1}{2}&\frac{a}{2}&b\end{pmatrix}
\end{align} if $x$ is odd (so $z$ is odd).

(d)  If $a$ is odd and $m$ is even, then
\begin{align}
\M\left(HS\begin{pmatrix}x&y&z\\m&a&b\end{pmatrix}\right)=2^{y+b}\M_6\begin{pmatrix}\frac{x+m}{2}&\frac{z}{2}-1&y-\frac{z}{2}\\\frac{m}{2}&\frac{a+1}{2}&b\end{pmatrix}\M_7\begin{pmatrix}\frac{x+m}{2}&\frac{z}{2}&y-\frac{z}{2}\\\frac{m}{2}&\frac{a-1}{2}&b\end{pmatrix}
\end{align} if $x$ is even (so $z$ is even); and
\begin{align}
\M\left(HS\begin{pmatrix}x&y&z\\m&a&b\end{pmatrix}\right)=2^{y+b}\M_6\begin{pmatrix}\frac{x+m-1}{2}&\frac{z-1}{2}&y-\frac{z-1}{2}\\\frac{m}{2}&\frac{a+1}{2}&b\end{pmatrix}\M_7\begin{pmatrix}\frac{x+m+1}{2}&\frac{z-1}{2}&y-\frac{z-1}{2}-1\\\frac{m}{2}&\frac{a-1}{2}&b\end{pmatrix}
\end{align} if $x$ is odd (so $z$ is odd).
\end{thm}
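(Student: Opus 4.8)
The plan is to obtain Theorem~\ref{mainthm} from the enumerations of the halved hexagons in Theorems~\ref{Hthm1}--\ref{Hthm8} by means of Ciucu's factorization theorem, which will be recalled in Section~\ref{Background}. The region $HS\begin{pmatrix}x&y&z\\m&a&b\end{pmatrix}$ carries a reflective symmetry: the vertical line through the removed shamrock and through the midpoints of the sides of length $x+m$ and $x+a+2b$ interchanges the two sides of length $y+a+2b$, interchanges the two sides of length $y+m$, and interchanges the two equal side-leaves of $S_{m,a,b,b}$. Passing to the dual bipartite graph puts us exactly in the hypotheses of Ciucu's theorem, so that
$\M\left(HS\begin{pmatrix}x&y&z\\m&a&b\end{pmatrix}\right)=2^{\,e}\,\M(R^{+})\,\M(R^{-})$,
where $R^{+}$ and $R^{-}$ are the two regions obtained by cutting $HS$ along the symmetry axis (the vertical lozenges straddling the axis acquiring weight $\tfrac12$), and $e$ is the exponent prescribed by the theorem. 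The whole proof then reduces to three bookkeeping tasks: identifying $R^{+}$ and $R^{-}$ (possibly after stripping off forced lozenges), computing $e$, and quoting the relevant $H_i$-formula.

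\noindent\textbf{The case analysis.} First I would examine how the symmetry axis meets the triangular tiles of $HS$: the axis crosses a vertical strip of alternating up- and down-pointing unit triangles, and how this strip is apportioned between $R^{+}$ and $R^{-}$ is dictated by the parities of the side lengths it passes through, equivalently by the parities of $x$, $a$, and $m$ (the parity of $z$ is not free, since $x\equiv z\pmod 2$ as noted in the text). This yields $2^{3}=8$ parity patterns, and in each of them one of $R^{+},R^{-}$ inherits the up-pointing triangles of the central strip as genuine triangles (with a doubled-weight edge on the axis) while the other inherits the down-pointing ones; a triangle of \emph{odd} side $s$ sitting on the axis is split into a half-triangle of side $s-1$ on one side and one of side $s+1$ on the other, which is precisely the origin of the paired shifts like $\tfrac{a-1}{2}$ versus $\tfrac{a+1}{2}$, or $\tfrac{m-1}{2}$ versus $\tfrac{m+1}{2}$. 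Matching each resulting region against the definitions of $H_1,\dots,H_8$ --- comparing outer side lengths, the sizes and positions of the half-$a$, half-$m$, and auxiliary $d$-holes, and in particular which portion (if any) of the zigzag western boundary carries weight-$\tfrac12$ vertical lozenges --- pins down the pair: $(H_1,H_4)$ when $a$ and $m$ are both odd, $(H_2,H_3)$ when both are even, $(H_5,H_8)$ when $a$ is even and $m$ odd, and $(H_6,H_7)$ when $a$ is odd and $m$ even, with $d=b$ throughout. A further split on the parity of $x$ (hence of $z$) fixes whether the ``$z$''-datum and the ``$y$''-datum divide between the two halves as $\bigl(\tfrac z2,\tfrac z2-1\bigr)$ or symmetrically, producing the exact parameter substitutions displayed in parts (a)--(d).

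\noindent\textbf{The power of two and sanity checks.} Next I would compute $e$ by counting, in each of the eight pictures, the axis vertices that Ciucu's theorem assigns to the ``shared'' middle structure; the contributions coming from the portion of the axis lying in the hexagon above and below the shamrock and from the two $b$-leaves of the shamrock combine to give $e=y+b$ uniformly in all eight cases. I would confirm this is consistent with the known specializations: the unit-bowtie case $a=1$, $b=0$, $m=1$ must recover Eisenk\"olbl's formula \cite{Eisen}, while pushing the shamrock to the boundary or to the center must recover the shamrock-dent and shamrock-hole formulas of \cite{Lai4} and \cite{CK}. Substituting the closed forms of Theorems~\ref{Hthm1}--\ref{Hthm8} for $\M(R^+)$ and $\M(R^-)$ then yields an explicit product formula; since Theorem~\ref{mainthm} is already stated in terms of the $\M_i$, this last substitution is optional.

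\noindent\textbf{Main obstacle.} I expect the real work to be the picture-driven bookkeeping in the case analysis: reading off, correctly and in all eight parity cases, the six parameters of the relevant $H_i$ from the geometry of ``one half of the shamrock-holed symmetric hexagon'' --- especially determining which half receives the auxiliary $d=b$ up-triangle adjacent to the split $m$-hole, which half carries the weighted, unweighted, or mixed western boundary, and how the $\pm1$ shifts from odd-sided splits are distributed --- and then verifying that the normalizing factor is $2^{y+b}$ in every case. Granting Theorems~\ref{Hthm1}--\ref{Hthm8}, no individual step is deep; the entire difficulty lies in this careful matching of regions.
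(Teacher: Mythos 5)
Your proposal follows essentially the same route as the paper's proof: apply Ciucu's factorization lemma to the dual graph of $HS$, identify the two component graphs (after stripping forced lozenges) with the appropriate pair of $H_i$-regions according to the parities of $a$, $m$, and $x$, and read off the exponent $2^{y+b}$; your pairings $(H_1,H_4)$, $(H_2,H_3)$, $(H_5,H_8)$, $(H_6,H_7)$ and parameter shifts match the paper's case analysis exactly. The only substantive content you leave implicit --- the figure-by-figure verification of the eight parameter substitutions --- is likewise handled in the paper by reference to its figures, so the approaches coincide.
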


\section{Preliminaries}\label{Background}

A \emph{(perfect) matching} of a graph is a collection of vertex-disjoint edges that cover all vertices of the graph. The tilings of a region on the triangular lattice are in bijection with the matchings of its \emph{(planar) dual graph}, the graph whose vertices are the unit triangles in the region and whose edges connect precisely two unit triangles sharing an edge. From this point of view, we use the notation $\M(G)$ for the number of matchings of $G$. Moreover, if the lozenges of the region carry weights, the corresponding edges of its dual graph carry the same weights. In the weighted case $\M(G)$ denotes the sum of the weights of all matchings in $G$, where the weight of a matching is the product of the weights of its constituent edges. We use the notation $\M(R)$ similarly for a weighted region $R$.

A \emph{forced lozenge} in a region $R$ is a lozenge that is contained in every tiling of the region. If we remove several forced lozenges from a region $R$ and get a new region $R'$, then
\begin{equation}
\M(R)=\omega \cdot \M(R'),
\end{equation}
where $\omega$ is the product of the weights of the forced lozenges that have been removed. We also have a similar identity when removing a subregion from a region as follows.

A region on the triangular lattice is said to be \emph{balanced}, if it has the same number of up- and down-pointing unit triangles. It is easy to see that an unbalanced region has no lozenge tiling. The following lemma, first appearing in \cite{ Lai3, Lai4}, is useful in our proofs.
\begin{lem}\label{RS}
Let $\mathcal{Q}$ be a subregion of a region $\mathcal{R}$. Assume that $\mathcal{Q}$ satisfies the following conditions:
\begin{enumerate}
\item[(1)] On each side of the boundary of $\mathcal{Q}$, the unit triangles having edges on the boundary are all up-pointing or all down-pointing.
\item[(2)] $\mathcal{Q}$ is balanced.
\end{enumerate}
Then $\M(\mathcal{R})=\M(\mathcal{Q}) \cdot \M(\mathcal{R}-\mathcal{Q})$.
\end{lem}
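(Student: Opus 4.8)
The plan is to reduce the weighted identity $\M(\mathcal{R})=\M(\mathcal{Q})\cdot\M(\mathcal{R}-\mathcal{Q})$ to the single combinatorial claim that \emph{no lozenge of any tiling of $\mathcal{R}$ straddles the boundary separating $\mathcal{Q}$ from $\mathcal{R}-\mathcal{Q}$}. Every lozenge is the union of one up-pointing and one down-pointing unit triangle sharing an edge, and any two triangles sharing an edge have opposite orientations; hence a lozenge straddles the separating boundary precisely when its shared edge lies on that boundary, with one constituent triangle in $\mathcal{Q}$ and the other in $\mathcal{R}-\mathcal{Q}$. Granting the no-straddling claim, the restriction map $T\mapsto(T|_{\mathcal{Q}},\,T|_{\mathcal{R}-\mathcal{Q}})$ is well defined on tilings of $\mathcal{R}$, and taking the union of a tiling of $\mathcal{Q}$ with one of $\mathcal{R}-\mathcal{Q}$ is its two-sided inverse. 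Since the weight of a tiling is by definition the product of the weights of its lozenges, and each lozenge then lies entirely in $\mathcal{Q}$ or entirely in $\mathcal{R}-\mathcal{Q}$, this bijection satisfies $\wt(T)=\wt(T|_{\mathcal{Q}})\cdot\wt(T|_{\mathcal{R}-\mathcal{Q}})$. Summing over all tilings factors the generating sum and yields the product formula, so the entire content lies in the no-straddling step.

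To prove no-straddling I would first extract the consequence of condition~(2). Fix a tiling $T$ of $\mathcal{R}$, let $t$ be the number of lozenges of $T$ lying entirely inside $\mathcal{Q}$, and note that each such lozenge covers exactly one up- and one down-pointing triangle of $\mathcal{Q}$. Every remaining triangle of $\mathcal{Q}$ is covered by a straddling lozenge, which covers exactly one triangle of $\mathcal{Q}$; writing $u^{*}$ (resp.\ $d^{*}$) for the number of straddling lozenges whose $\mathcal{Q}$-triangle is up- (resp.\ down-) pointing, every up- and every down-pointing triangle of $\mathcal{Q}$ being covered once gives
\[
\#\{\text{up-triangles of }\mathcal{Q}\}=t+u^{*},\qquad \#\{\text{down-triangles of }\mathcal{Q}\}=t+d^{*}.
\]
Because $\mathcal{Q}$ is balanced the two left-hand sides are equal, forcing $u^{*}=d^{*}$. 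Thus straddling lozenges remove equally many up- and down-pointing triangles from $\mathcal{Q}$, and it remains only to rule out both being positive.

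This is where condition~(1) enters, and it is the crux of the argument. Along the separating boundary, the unit triangles of $\mathcal{Q}$ incident to it are all of a single orientation, say down-pointing; consequently a straddling lozenge, whose shared edge lies on this boundary and whose $\mathcal{Q}$-triangle is such an incident triangle, can only remove a down-pointing triangle of $\mathcal{Q}$. Hence $u^{*}=0$, and combined with $u^{*}=d^{*}$ we obtain $d^{*}=0$: there are no straddling lozenges at all. (When the incident triangles are all up-pointing one reverses the roles of $u^{*}$ and $d^{*}$.) The main obstacle I anticipate is precisely the bookkeeping in this step — verifying that condition~(1) genuinely pins the orientation of every removable boundary triangle of $\mathcal{Q}$ so that straddling lozenges can steal triangles of only one orientation, since it is this one-sidedness, rather than the balance count alone, that collapses $u^{*}=d^{*}$ to $0$. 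Once no-straddling is in hand, the bijection and weight-multiplicativity of the first paragraph complete the proof.
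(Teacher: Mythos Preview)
The paper does not prove this lemma; it is quoted with a citation to \cite{Lai3,Lai4}. Your argument is correct and is essentially the standard one: the balance hypothesis gives $u^{*}=d^{*}$, while condition~(1)---read as asserting that \emph{all} boundary unit triangles of $\mathcal{Q}$ share a single orientation (equivalently, all boundary triangles on the $\mathcal{R}-\mathcal{Q}$ side share the opposite orientation)---forces one of $u^{*},d^{*}$ to vanish, and hence both. The bookkeeping obstacle you anticipate dissolves once ``each side of the boundary'' is read as ``on the inside and on the outside'' rather than segment-by-segment; indeed, under a segment-by-segment reading the lemma would already be false (take $\mathcal{Q}$ to be a single unforced lozenge inside a hexagon, which is balanced and trivially has one triangle per boundary edge), so the global reading is the intended one and your one-sidedness step needs no further justification.
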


One of our main tools is the following powerful \emph{graphical condensation} of Kuo \cite{Kuo}; it is usually referred to as \emph{Kuo condensation}. For ease of reference, we show here two particular versions of Kuo condensation that will be employed in our proofs.

\begin{lem}[Theorem 5.1 in \cite{Kuo}]\label{Kuothm1}
Assume that $G=(V_1,V_2,E)$ is a weighted bipartite planar graph with $|V_1|=|V_2|$. Assume that $u,v,w,s$ are four vertices appearing in this cyclic order on a face of $G$, such that $u,w\in V_1$ and $v,s\in V_2$. Then
\begin{equation}\label{Kuoeq}
\M(G)\M(G-\{u,v,w,s\})=\M(G-\{u,v\})\M(G-\{w,s\})+\M(G-\{u,s\})\M(G-\{v,w\}).
\end{equation}
\end{lem}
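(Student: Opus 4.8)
Since the statement is Kuo's graphical condensation, the plan is to reprove it by the standard superposition-of-matchings argument, carrying the edge weights along throughout. First I would expand both sides as weighted sums over ordered pairs of matchings: a term of $\M(G)\M(G-\{u,v,w,s\})$ is a pair $(M_1,M_2)$ with $M_1$ a perfect matching of $G$ and $M_2$ a perfect matching of $G-\{u,v,w,s\}$, weighted by $w(M_1)w(M_2)$, and the three products on the right are expanded in the same way. The central object is the superposition of $M_1$ and $M_2$, regarded as a multiset of edges: every vertex of $G-\{u,v,w,s\}$ is covered once by $M_1$ and once by $M_2$, hence has degree $2$, while $u,v,w,s$ are covered by $M_1$ alone and have degree $1$. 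Consequently the superposition decomposes into doubled edges (edges common to $M_1$ and $M_2$), even cycles that alternate between $M_1$- and $M_2$-edges, and exactly two alternating paths whose four endpoints are $u,v,w,s$.

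Next I would classify these superpositions. Along each path the edges strictly alternate between $M_1$ and $M_2$, and since both ends are $M_1$-edges each path has odd length; in a bipartite graph an odd path joins vertices of opposite color, so each path runs from $\{u,w\}\subseteq V_1$ to $\{v,s\}\subseteq V_2$. This forces the endpoint pairing to be either $(u\text{-}v,\,w\text{-}s)$ or $(u\text{-}s,\,v\text{-}w)$; the remaining pairing $(u\text{-}w,\,v\text{-}s)$ is impossible both by the coloring and, independently, because on the face carrying $u,v,w,s$ in this cyclic order two vertex-disjoint paths joining $u$ to $w$ and $v$ to $s$ would have to cross, contradicting planarity. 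Call the two admissible possibilities Type A and Type B. The same degree-and-color analysis applied on the right shows that every pair counted by $\M(G-\{u,v\})\M(G-\{w,s\})$ has a Type A superposition (the endpoint $u$ cannot reach $s$, since that would force an even path between opposite colors), and every pair counted by $\M(G-\{u,s\})\M(G-\{v,w\})$ has a Type B superposition; here the hypothesis $|V_1|=|V_2|$ is what makes each deleted pair bichromatic, so each vertex-deleted subgraph is balanced.

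The heart of the proof is then a weight-preserving bijection for each type. Given a Type A pair $(M_1,M_2)$, I would define $(N_1,N_2)$ by interchanging the roles of $M_1$ and $M_2$ \emph{only along the $u$-$v$ path}, leaving the $w$-$s$ path, the doubled edges, and the even cycles assigned as before. One checks that $N_1$ then covers every vertex except $u,v$ and $N_2$ covers every vertex except $w,s$, so $N_1\in\M(G-\{u,v\})$ and $N_2\in\M(G-\{w,s\})$; the map is its own inverse (swap back along the same path) and preserves weight because the underlying edge multiset is unchanged, whence $w(N_1)w(N_2)=w(M_1)w(M_2)$. An identical swap along the $u$-$s$ path handles Type B. Summing over the two types gives
\begin{equation*}
\M(G)\M(G-\{u,v,w,s\})=\M(G-\{u,v\})\M(G-\{w,s\})+\M(G-\{u,s\})\M(G-\{v,w\}).
\end{equation*}
I expect the main obstacle to be the bijection step: one must verify rigorously that each reassembled $N_i$ really is a perfect matching of the correct vertex-deleted subgraph, that the correspondence is exhaustive and injective (no superposition double-counted or missed), and that the planarity and cyclic-order hypothesis is used exactly where it is needed, namely to guarantee the two alternating paths are non-crossing so that the pairing is pinned to one of Types A and B and matches the intended product on the right.
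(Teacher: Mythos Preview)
The paper does not prove this lemma at all: it is quoted verbatim as Theorem~5.1 of Kuo~\cite{Kuo} and used as a black box in the Kuo-condensation arguments of Section~\ref{ProofMain}. So there is no ``paper's own proof'' to compare your attempt against.

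That said, what you have written is precisely Kuo's original superposition argument, and it is correct. One small point worth tightening: when you argue that a pair $(N_1,N_2)$ contributing to $\M(G-\{u,v\})\M(G-\{w,s\})$ must have a Type~A superposition, your parity argument only rules out the pairing $(u\text{-}s,\,v\text{-}w)$; the pairing $(u\text{-}w,\,v\text{-}s)$ is color-compatible on that side (the path from $u$ now starts with an $N_2$-edge and would end with an $N_1$-edge, giving even length between two vertices of $V_1$). It is the planarity/cyclic-order hypothesis, which you already invoked for the left-hand side, that is needed here as well to exclude $(u\text{-}w,\,v\text{-}s)$. You allude to this in your final sentence, but in a clean write-up you should make the appeal explicit at that step rather than only for the left-hand decomposition.
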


\begin{lem}[Theorem 5.3 in \cite{Kuo}]\label{Kuothm2}
Assume that $G=(V_1,V_2,E)$ is a weighted bipartite planar graph with $|V_1|=|V_2|+1$. Assume that $u,v,w,s$ are four vertices appearing in this cyclic order on a face of $G$, such that $u,v,w\in V_1$ and $s\in V_2$. Then
\begin{equation}\label{Kuoeq2}
\M(G-\{v\})\M(G-\{u,w,s\})=\M(G-\{u\})\M(G-\{v,w,s\})+\M(G-\{w\})\M(G-\{u,v,s\}).
\end{equation}
\end{lem}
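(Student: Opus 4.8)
Since Lemma~\ref{Kuothm2} is the unbalanced form of Kuo's graphical condensation, the plan is to reproduce the standard superposition-of-matchings argument, the one genuinely essential input being the planarity restriction forced by the cyclic order of $u,v,w,s$ on a common face. First I would record that, because $u,v,w\in V_1$ and $s\in V_2$, each of the six graphs occurring in \eqref{Kuoeq2} is balanced: deleting $v$ (or $u$, or $w$) removes one vertex of $V_1$, while deleting a triple such as $\{u,w,s\}$ removes two from $V_1$ and one from $V_2$. Thus every matching number in sight is the honest weighted sum it should be, and I may read each product as a sum over \emph{ordered pairs} of perfect matchings, e.g.\ the left side as $\sum \wt(M_1)\wt(M_2)$ over $(M_1,M_2)\in\M(G-\{v\})\times\M(G-\{u,w,s\})$.

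Next I would superimpose. For a left-hand pair form the edge-multiset $M_1\cup M_2$. Since $M_1$ omits exactly $v$ and $M_2$ omits exactly $u,w,s$, and these omitted sets are disjoint, every vertex of $G$ has degree $1$ or $2$ in the superposition, with precisely the four vertices $u,v,w,s$ having degree $1$. Hence $M_1\cup M_2$ decomposes into doubled edges, alternating cycles, and exactly two paths whose endpoints are $u,v,w,s$. Recording which matching supplies the unique edge at each degree-$1$ vertex, I find that $v$ carries an $M_2$-edge while $u,w,s$ carry $M_1$-edges. Using the bipartite structure, along any such path the two matchings alternate, and its endpoints lie in the same part of $(V_1,V_2)$ exactly when the path has even length, exactly when its two end-edges come from different matchings. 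Combining this parity bookkeeping with the recorded edge-types leaves only three conceivable ways to pair $\{u,v,w,s\}$ into two paths.

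The crux, and the step I expect to demand real care, is excluding the \emph{crossing} pairing $\{u,w\},\{v,s\}$. Parity alone permits it, but the two paths are vertex-disjoint and $u,v,w,s$ occur in this cyclic order on a face of the planar graph $G$; two disjoint paths joining boundary vertices of a face cannot realize crossing chords, so this pairing is geometrically impossible (I would make this precise by a Jordan-curve argument on the face carrying the four vertices). Consequently a left-hand superposition realizes either the pairing $\{u,v\},\{w,s\}$ (call it type~A) or $\{u,s\},\{v,w\}$ (type~C). Running the identical analysis on the two right-hand products, $\M(G-\{u\})\M(G-\{v,w,s\})$ yields \emph{only} type~A superpositions (its other parity-allowed pairing is again the forbidden crossing one), and $\M(G-\{w\})\M(G-\{u,v,s\})$ yields only type~C.

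Finally I would supply the weight-preserving bijections. Given a type-A left-hand pair, swap the membership of the two matchings along the $\{u,v\}$-path only: this leaves the underlying edge multiset, hence $\wt(M_1)\wt(M_2)$, unchanged, but reassigns the omitted vertices so that the first matching now omits only $u$ and the second omits exactly $v,w,s$; the map lands in $\M(G-\{u\})\times\M(G-\{v,w,s\})$ and is an involution. The analogous swap along the $\{v,w\}$-path matches type-C left-hand pairs bijectively with $\M(G-\{w\})\times\M(G-\{u,v,s\})$. Since the left side is thereby partitioned into type-A and type-C contributions that equal the two right-hand products respectively, and all weights are preserved, summing yields \eqref{Kuoeq2}. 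Everything beyond the planarity exclusion is routine bookkeeping on the superposition.
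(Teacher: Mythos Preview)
The paper does not prove this lemma at all; it is quoted verbatim from Kuo's paper \cite{Kuo} as a black box for use in Section~\ref{ProofMain}. Your superposition argument is the standard one and is correct, so you have supplied a proof where the paper offers only a citation.

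One small inaccuracy worth fixing: on the \emph{left-hand} product $\M(G-\{v\})\M(G-\{u,w,s\})$, parity alone already excludes the crossing pairing $\{u,w\},\{v,s\}$. Indeed, $u,w\in V_1$ would force an even-length path and hence end-edges from different matchings, but both end-edges at $u$ and $w$ come from $M_1$; likewise $v\in V_1$, $s\in V_2$ would force an odd-length path with end-edges from the same matching, but $v$ carries an $M_2$-edge and $s$ an $M_1$-edge. So planarity is not needed on the left side. Where planarity is genuinely essential is exactly where you invoke it for the two right-hand products: there parity permits the crossing pairing together with one of types~A or~C, and the Jordan-curve argument is what eliminates the crossing case. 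This misattribution does not affect the validity of your bijection or the final identity, but your sentence ``Parity alone permits it'' (referring to the left side) should be corrected.
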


The next lemma, usually called \emph{Ciucu's factorization} (Theorem 1.2 in \cite{Ciucu3}), allows us to write the matching number of a symmetric  graph as the product of the matching numbers of two smaller graphs.

\begin{lem}[Ciucu's Factorization]\label{ciucuthm}
Let $G=(V_1,V_2,E)$ be a weighted bipartite planar graph with a vertical symmetry axis $\ell$. Assume that $a_1,b_1,a_2,b_2,\dots,a_k,b_k$ are all the vertices of $G$ on $\ell$ appearing in this order from top to bottom\footnote{It is easy to see that if $G$ admits a perfect matching, then $G$ has an even number of vertices on $\ell$.}. Assume in addition that the vertices of $G$ on $\ell$ form a cut set of $G$ (i.e. the removal of those vertices separates $G$ into two disjoint subgraphs). We reduce the weights of all edges of $G$ lying on $\ell$ by half and keep the other edge-weights unchanged. Next, we color the two vertex classes of $G$ black and white. Without loss of generality, assume that $a_1$ is black. Finally, we remove all edges on the left of $\ell$ which are adjacent to a black $a_i$ or a white $b_j$; we also remove the edges  on the right of $\ell$ which are adjacent to a white $a_i$ or a black $b_j$. This way, $G$ is divided into two disjoint weighted graphs $G^+$ and $G^-$ (on the left and right of $\ell$, respectively). Then
\begin{equation}
\M(G)=2^{k}\M(G^+)\M(G^-).
\end{equation}
\end{lem}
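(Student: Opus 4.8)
The plan is to derive Theorem~\ref{mainthm} from Ciucu's Factorization (Lemma~\ref{ciucuthm}) applied to the planar dual graph $G$ of the region $HS\begin{pmatrix}x&y&z\\m&a&b\end{pmatrix}$. Because the hexagon is symmetric and the shamrock $S_{m,a,b,b}$ lies on the vertical symmetry axis $\ell$, the punctured region, and hence $G$, is invariant under reflection across $\ell$. The removed shamrock separates the region into an upper and a lower part that are joined only through the lattice points on $\ell$, so the vertices of $G$ lying on $\ell$ form a cut set and all hypotheses of Lemma~\ref{ciucuthm} are met. Writing $G^+$ and $G^-$ for the two graphs produced by the lemma, we obtain $\M(HS)=2^{k}\,\M(G^+)\,\M(G^-)$, and the whole argument reduces to (i) computing $k$ and (ii) identifying $G^+$ and $G^-$ with the dual graphs of the halved hexagons $H_i$ of Theorems~\ref{Hthm1}--\ref{Hthm8}.

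For step (i) I would count the triangles of the punctured region that are bisected by $\ell$, since these are precisely the vertices of $G$ lying on $\ell$. They alternate in orientation along the axis, and after subtracting those absorbed by the core triangle and the two axial leaves of the shamrock, a direct count gives $2k=2(y+b)$ in every case, whence the uniform prefactor $2^{y+b}$. The prescription in Lemma~\ref{ciucuthm} to halve the weights of the edges of $G$ lying on $\ell$ is exactly what creates the weight-$\frac{1}{2}$ vertical lozenges along the western boundaries of the weighted and mixed half-regions $H_3,H_4$ and $H_5$--$H_8$; the asymmetric edge removal of the lemma is what makes one half weighted and the other unweighted.

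For step (ii) I would treat the eight parity cases of $(x,m,a)$ as in the statement. Cutting along $\ell$ bisects the core and the top and bottom leaves of the shamrock while sending one side leaf into each half, so each half inherits a half up-triangle of side $2a$, a half down-triangle of side $2m$, and an adjacent leaf of side $b$ --- exactly the defect pattern of the $H_i$ regions with $d=b$. The parities of $m$ and $a$ decide whether each bisection lands on a lattice line: when $a$ and $m$ have equal parity the two halves form the clean unweighted/weighted pair $H_1,H_4$ (resp. $H_2,H_3$), while when $a$ and $m$ have opposite parity the reflection produces the mixed-boundary halves $H_5,H_8$ (resp. $H_6,H_7$), in which only the portion of the western boundary above or below the hole carries the $\frac{1}{2}$-weights. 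After the cut, the unit triangles along the new straight edge (and, in the mixed cases, along the northern or southern side) are forced; removing these forced lozenges, all of weight one, and applying Lemma~\ref{RS} reduces each half to precisely an $H_i$ region whose six parameters I would read off from its side lengths and match against the shifted values displayed in the theorem.

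The main obstacle is the bookkeeping in step (ii): confirming, in each of the eight cases, that the reflected half is exactly one of the predefined $H_i$ regions and that its parameters specialize to the precise shifted arguments stated (for example the entries $y-\frac{z}{2}$ and $y-\frac{z-1}{2}$, which encode the forced parity agreement of $x$ and $z$). This demands careful tracking of how the core and three leaves of $S_{m,a,b,b}$ are divided by $\ell$, how the $\frac{1}{2}$-weights and the forced lozenges are distributed between the two halves, and how the floor functions in the $H_i$ formulas interact with the parities. Once these identifications are secured, substituting the product formulas of Theorems~\ref{Hthm1}--\ref{Hthm8} into $\M(HS)=2^{y+b}\,\M(G^+)\,\M(G^-)$ yields each displayed formula directly.
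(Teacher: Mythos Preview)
Your proposal does not prove the stated lemma. Lemma~\ref{ciucuthm} is Ciucu's Factorization, a general assertion about the matching generating function of an arbitrary weighted bipartite planar graph with a vertical symmetry axis; what you have written is instead a proof sketch of Theorem~\ref{mainthm} that \emph{applies} Lemma~\ref{ciucuthm} to the dual graph of $HS\begin{pmatrix}x&y&z\\m&a&b\end{pmatrix}$. You have inverted the logical dependence: you treat the lemma as a black box rather than establishing it, so as a proof of Lemma~\ref{ciucuthm} the proposal is vacuous.

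For the record, the paper does not prove Lemma~\ref{ciucuthm} either; it is quoted from \cite{Ciucu3} (Theorem~1.2 there) as a preliminary tool. Your outline does, however, match almost exactly the paper's own proof of Theorem~\ref{mainthm}: apply Ciucu's Factorization to the dual graph of the symmetric defected hexagon, obtain the prefactor $2^{y+b}$ from the axis vertex count, identify $G^{+}$ and $G^{-}$ (after stripping forced lozenges) with the appropriate reflected $H_i$ regions according to the parities of $x$, $a$, $m$, and read off the six parameters. So if the target were Theorem~\ref{mainthm}, your plan would be correct and essentially identical to the paper's; but nothing in it addresses the combinatorial argument (a weight-preserving involution on pairs of matchings) that actually underlies Ciucu's general factorization identity.
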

See Figure \ref{Figurefactor} for an example of the construction of weighted `component' graphs $G^+$ and $G^-$.

\begin{figure}\centering
\setlength{\unitlength}{3947sp}%
\begingroup\makeatletter\ifx\SetFigFont\undefined%
\gdef\SetFigFont#1#2#3#4#5{%
  \reset@font\fontsize{#1}{#2pt}%
  \fontfamily{#3}\fontseries{#4}\fontshape{#5}%
  \selectfont}%
\fi\endgroup%
\resizebox{10cm}{!}{
\begin{picture}(0,0)%
\includegraphics{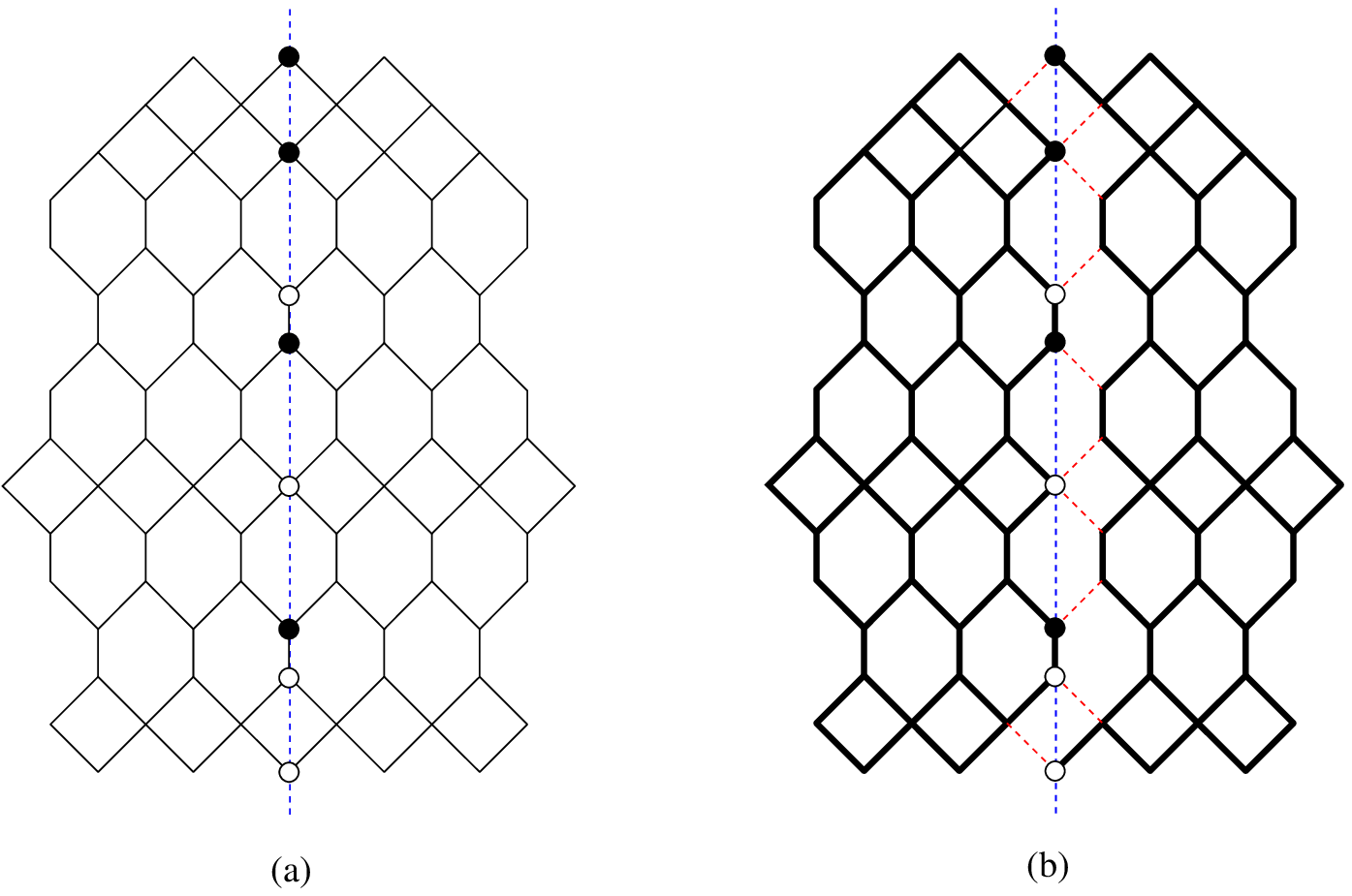}%
\end{picture}%
\begin{picture}(6803,4626)(1415,-4264)
\put(6992,-1478){\makebox(0,0)[rb]{\smash{{\SetFigFont{11}{13.2}{\familydefault}{\mddefault}{\updefault}{$\frac{1}{2}$}%
}}}}
\put(6593,162){\makebox(0,0)[rb]{\smash{{\SetFigFont{11}{13.2}{\familydefault}{\mddefault}{\updefault}{$\ell$}%
}}}}
\put(6992,-75){\makebox(0,0)[rb]{\smash{{\SetFigFont{11}{13.2}{\familydefault}{\mddefault}{\updefault}{$a_1$}%
}}}}
\put(6581,-618){\makebox(0,0)[rb]{\smash{{\SetFigFont{11}{13.2}{\familydefault}{\mddefault}{\updefault}{$b_1$}%
}}}}
\put(6551,-1309){\makebox(0,0)[rb]{\smash{{\SetFigFont{11}{13.2}{\familydefault}{\mddefault}{\updefault}{$a_2$}%
}}}}
\put(6581,-1571){\makebox(0,0)[rb]{\smash{{\SetFigFont{11}{13.2}{\familydefault}{\mddefault}{\updefault}{$b_2$}%
}}}}
\put(7004,-2241){\makebox(0,0)[rb]{\smash{{\SetFigFont{11}{13.2}{\familydefault}{\mddefault}{\updefault}{$a_3$}%
}}}}
\put(7038,-2970){\makebox(0,0)[rb]{\smash{{\SetFigFont{11}{13.2}{\familydefault}{\mddefault}{\updefault}{$b_3$}%
}}}}
\put(7025,-3207){\makebox(0,0)[rb]{\smash{{\SetFigFont{11}{13.2}{\familydefault}{\mddefault}{\updefault}{$a_4$}%
}}}}
\put(7013,-3792){\makebox(0,0)[rb]{\smash{{\SetFigFont{11}{13.2}{\familydefault}{\mddefault}{\updefault}{$b_4$}%
}}}}
\put(5106,-1474){\makebox(0,0)[rb]{\smash{{\SetFigFont{11}{13.2}{\familydefault}{\mddefault}{\updefault}{$G^+$}%
}}}}
\put(8203,-1440){\makebox(0,0)[rb]{\smash{{\SetFigFont{11}{13.2}{\familydefault}{\mddefault}{\updefault}{$G^-$}%
}}}}
\put(6597,-3131){\makebox(0,0)[rb]{\smash{{\SetFigFont{11}{13.2}{\familydefault}{\mddefault}{\updefault}{$\frac{1}{2}$}%
}}}}
\put(2799,156){\makebox(0,0)[rb]{\smash{{\SetFigFont{11}{13.2}{\familydefault}{\mddefault}{\updefault}{$\ell$}%
}}}}
\put(3198,-81){\makebox(0,0)[rb]{\smash{{\SetFigFont{11}{13.2}{\familydefault}{\mddefault}{\updefault}{$a_1$}%
}}}}
\put(2787,-624){\makebox(0,0)[rb]{\smash{{\SetFigFont{11}{13.2}{\familydefault}{\mddefault}{\updefault}{$b_1$}%
}}}}
\put(2757,-1315){\makebox(0,0)[rb]{\smash{{\SetFigFont{11}{13.2}{\familydefault}{\mddefault}{\updefault}{$a_2$}%
}}}}
\put(2787,-1577){\makebox(0,0)[rb]{\smash{{\SetFigFont{11}{13.2}{\familydefault}{\mddefault}{\updefault}{$b_2$}%
}}}}
\put(3210,-2247){\makebox(0,0)[rb]{\smash{{\SetFigFont{11}{13.2}{\familydefault}{\mddefault}{\updefault}{$a_3$}%
}}}}
\put(3244,-2976){\makebox(0,0)[rb]{\smash{{\SetFigFont{11}{13.2}{\familydefault}{\mddefault}{\updefault}{$b_3$}%
}}}}
\put(3231,-3213){\makebox(0,0)[rb]{\smash{{\SetFigFont{11}{13.2}{\familydefault}{\mddefault}{\updefault}{$a_4$}%
}}}}
\put(3219,-3798){\makebox(0,0)[rb]{\smash{{\SetFigFont{11}{13.2}{\familydefault}{\mddefault}{\updefault}{$b_4$}%
}}}}
\put(2042,-61){\makebox(0,0)[rb]{\smash{{\SetFigFont{11}{13.2}{\familydefault}{\mddefault}{\updefault}{$G$}%
}}}}
\end{picture}}
\caption{An illustration of Ciucu's factorization lemma. The removed edges are given by the dotted lines to the right and left of $\ell$.}\label{Figurefactor}
\end{figure}

We finally present here a direct consequence from the definition of the $\T$ and $\V$-products in the previous section.
\begin{lem}\label{trapsimpn}
For the trapezoidal products defined in (\ref{Teq}) and (\ref{Veq}), we have
\begin{enumerate}
\item \[\dfrac{\T(x,n+1,m)}{\T(x,n,m)}=(x+n-m+1)_m,\]
\item \[\dfrac{\T(x+1,n,m)}{\T(x,n,m)}=\dfrac{(x+n-m+1)_m}{(x)_m},\]
\item \[\dfrac{\V(x,n+1,m)}{\V(x,n,m)}=[x+2n-2m+2]_m,\] and
\item \[\dfrac{\V(x+2,n,m)}{\V(x,n,m)}=\dfrac{[x+2n-2m+2]_m}{[x]_m}.\]
\end{enumerate}
\end{lem}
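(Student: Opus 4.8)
The plan is to reduce each of the four displayed ratios to a product of elementary one-factor ratios of (ordinary or skipped) Pochhammer symbols, evaluate those ratios, and then reindex the resulting product back into the claimed closed form. Throughout I treat the case $m\ge 0$, which is the range occurring in the applications. The case $m=0$ is immediate: by the empty-product conventions in (\ref{Teq})--(\ref{Veq}) together with $(x+n+1)_0=[x+2n-2m+2]_0=1$, both sides of each identity equal $1$. So assume $m>0$, where $\T$ and $\V$ are the genuine products $\prod_{i=0}^{m-1}(\,\cdot\,)$.

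First I would record the elementary ratio identities that do all the work. Directly from (\ref{poch}) one checks, for \emph{every} integer $k$ (so that the possibly zero or negative inner indices $n-2i$ are covered),
\[
\frac{(y)_{k+1}}{(y)_k}=y+k,\qquad \frac{(y+1)_k}{(y)_k}=\frac{y+k}{y},
\]
and, from (\ref{spoch}),
\[
\frac{[y]_{k+1}}{[y]_k}=y+2k,\qquad \frac{[y+2]_k}{[y]_k}=\frac{y+2k}{y}.
\]
Each of these merely says that raising the index appends a single factor (or, for the shift identities, trades the bottom factor for a top one); the negative-index cases follow in the same manner from the reciprocal clauses of (\ref{poch}) and (\ref{spoch}), and verifying them at the boundary values $k=0,-1$ is a two-line check.

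For part (1) I would write $\dfrac{\T(x,n+1,m)}{\T(x,n,m)}=\prod_{i=0}^{m-1}\dfrac{(x+i)_{n+1-2i}}{(x+i)_{n-2i}}$ and apply the first identity with $y=x+i$ and $k=n-2i$, obtaining $\prod_{i=0}^{m-1}(x+n-i)=(x+n)(x+n-1)\cdots(x+n-m+1)$; read in increasing order this is exactly $(x+n-m+1)_m$. Part (3) is identical with $[\,\cdot\,]$ replacing $(\,\cdot\,)$: the typical factor becomes $(x+2i)+2(n-2i)=x+2n-2i$, and $\prod_{i=0}^{m-1}(x+2n-2i)=[x+2n-2m+2]_m$ after reindexing. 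For parts (2) and (4) I would instead use the shift identities: part (2) gives $\prod_{i=0}^{m-1}\frac{x+n-i}{x+i}=\frac{(x+n-m+1)_m}{(x)_m}$, whose numerator is precisely the product computed in (1) and whose denominator is $\prod_{i=0}^{m-1}(x+i)=(x)_m$; part (4) gives $\prod_{i=0}^{m-1}\frac{x+2n-2i}{x+2i}=\frac{[x+2n-2m+2]_m}{[x]_m}$ by the same splitting, using the numerator from (3) and $\prod_{i=0}^{m-1}(x+2i)=[x]_m$.

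There is no genuine obstacle beyond careful bookkeeping; the single point deserving attention is that inside one product $\T(x,n,m)$ or $\V(x,n,m)$ the inner index $n-2i$ can be zero or negative, so the elementary ratio identities must be invoked in their full integer-index form rather than only for positive indices. Establishing those ratios uniformly across the sign of the index, via the short $k=0,-1$ checks against the reciprocal clauses of (\ref{poch}) and (\ref{spoch}), is the one place to be careful; the remainder is just matching each evaluated product against the definitions of the Pochhammer and skipped-Pochhammer symbols.
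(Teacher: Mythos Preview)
The paper does not actually prove this lemma; it is presented as ``a direct consequence from the definition of the $\T$ and $\V$-products'' and left at that. Your argument correctly supplies the routine verification: the termwise ratios $\frac{(y)_{k+1}}{(y)_k}=y+k$, $\frac{(y+1)_k}{(y)_k}=\frac{y+k}{y}$ and their skipped analogues are exactly what is needed, and your reindexing of the resulting products into $(x+n-m+1)_m$, $[x+2n-2m+2]_m$, etc.\ is correct. Your care in noting that the inner index $n-2i$ may be nonpositive (so that the ratio identities must be checked against the reciprocal clauses of (\ref{poch}) and (\ref{spoch})) is appropriate and is the only subtlety. Restricting to $m\ge 0$ is harmless, since every invocation of the lemma in the paper has a nonnegative third argument; indeed, identity~(1) as literally stated does not extend to $m<0$ under the conventions (\ref{poch}) and (\ref{Teq}), so your caveat is in fact necessary rather than merely convenient.
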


\section{Proof of the main theorems}\label{ProofMain}

Let us first prove Theorem \ref{Bthm1}.

\begin{proof}[Proof of Theorem \ref{Bthm1}]

We will prove the theorem by induction on $x+b+c$. The base cases are the situations when at least one of parameters $x,b,c$ is $0$.

If $x=0$, then the region has several forced lozenges as shown in Figure \ref{Bbase}(a). By removing these forced lozenges we get a new region congruent to the halved hexagon $P_{c,c,a}$ (see Figure \ref{Bbase}(a)).
Therefore
\begin{equation}
\M(B_{0,a,b,c})=\M(P_{c,c,a}),
\end{equation}
and this case follows from Proctor's Theorem \ref{Proctiling}. Similarly, the case when $b=0$ also has forced lozenges at the bottom of the region. The removal of these forced lozenges gives the halved hexagon $P_{c,c,x+a}$, and this case follows again from Theorem \ref{Proctiling} (shown in Figure \ref{Bbase}(b)). Finally, the base case when $c=0$ is reduced to the enumeration of tilings of a hexagon (illustrated in Figure \ref{Bbase}(c)) which is known by MacMahon's classical theorem \cite{Mac}.

\begin{figure}\centering
\includegraphics[width=15cm]{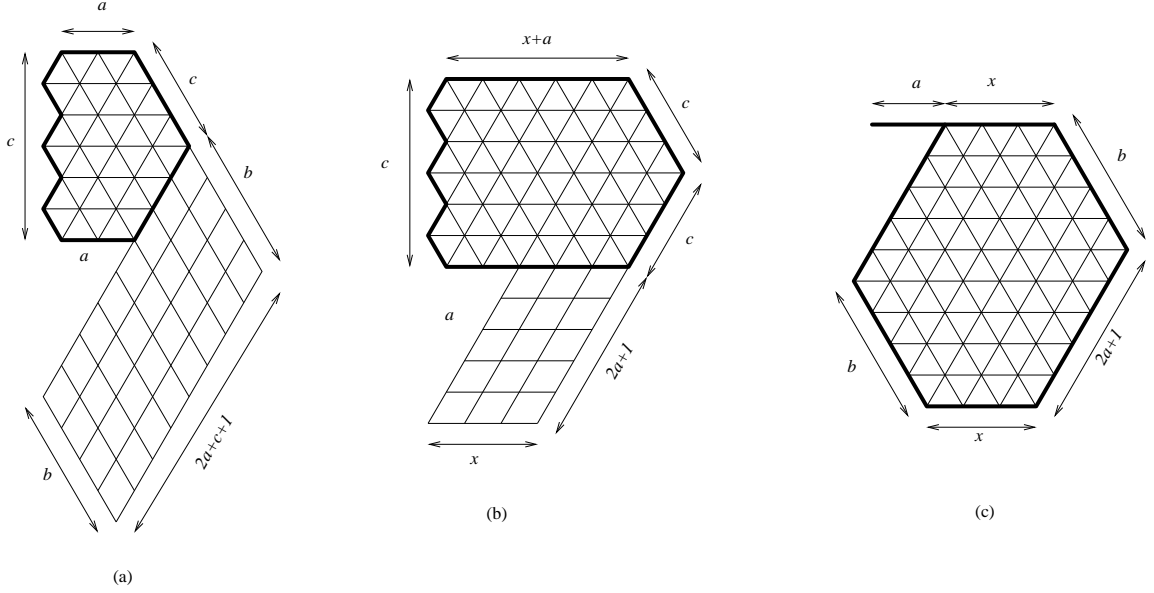}
\caption{A $B$-type region with possible forced lozenges in the special cases when (a) $x=0$, (b) $b=0$, and (c) $c=0$.}\label{Bbase}
\end{figure}

\medskip

For the induction step, we assume that $x,a,c>0$ and that the theorem is true for any $B$-type regions with the sum of their $x$-, $b$-, and $c$-parameters strictly less than $x+b+c$. We use Kuo condensation to obtain a recurrence on the number of tilings of $B$-type region. In particular, we apply Kuo's Lemma \ref{Kuothm1} to the dual graph $G$ of the region $B_{x,a,b,c}$ with the four vertices $u,v,w,s$ corresponding to the four black unit triangles in Figure \ref{halvedbowtie3}(a). In particular, the $u$- and $v$- triangles are the black ones in the northeastern corner of the region, and the $w$- and $s$-triangles are the black ones in the southeastern corner of the region.

\begin{figure}\centering
\includegraphics[width=12cm]{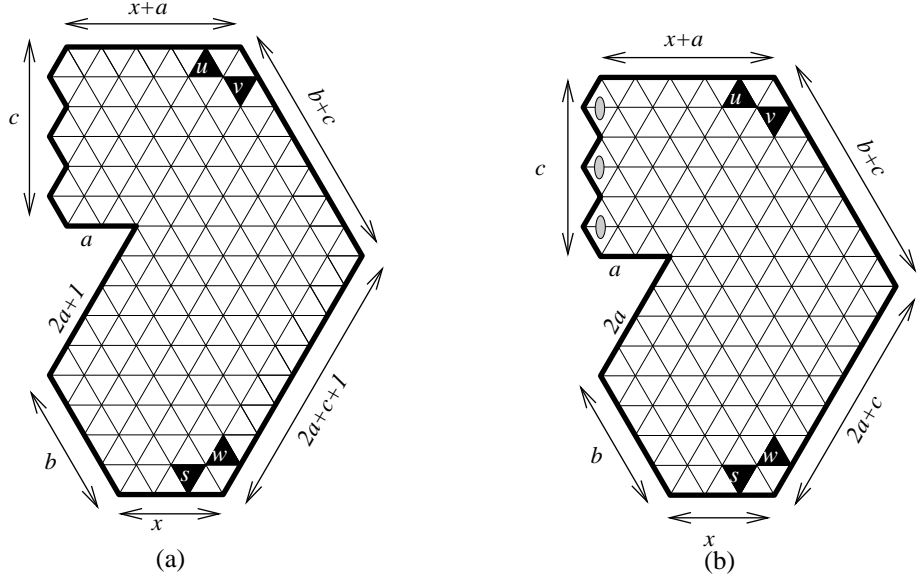}
\caption{How to apply Kuo condensation to (a) a $B$-type region and (b) a $B'$-type region.}\label{halvedbowtie3}
\end{figure}
\begin{figure}\centering
\includegraphics[width=10cm]{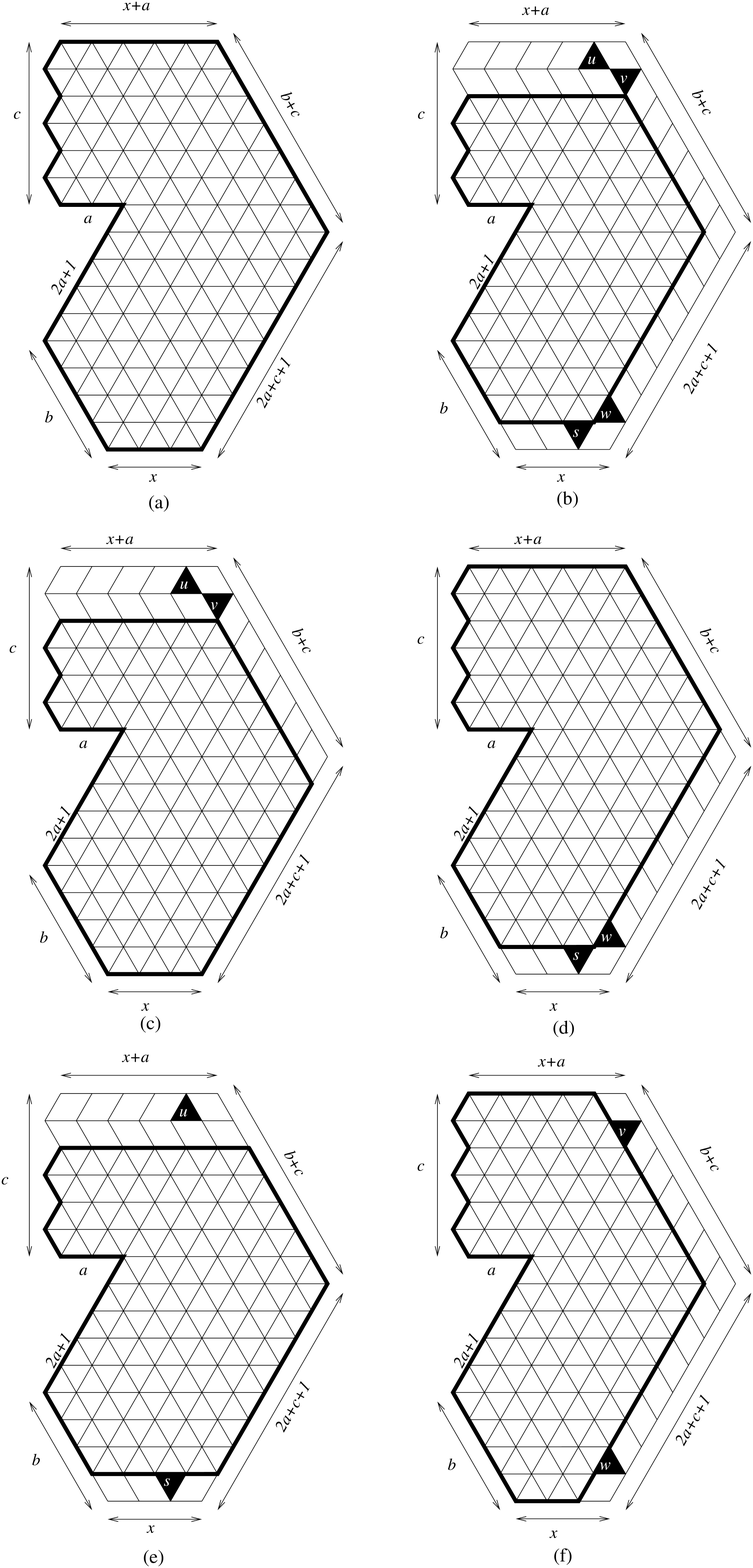}
\caption{Obtaining the recurrence for the number of tilings of a $B$-type region.}\label{Bkuo}
\end{figure}

Consider the region corresponding to the graph $G-\{u,v,w,s\}$; the removal of all four black unit triangles yields several forced lozenges along the northern, northeastern, southeastern, and southern sides of the region. By removing these forced lozenges, we get a new $B$-type region, the region $B_{x,a,b-1,c-1}$ (see the region restricted by the bold contour in Figure \ref{Bkuo}(a)). Thus, we get
\begin{equation}
\M(G-\{u,v,w,s\})=\M(B_{x,a,b-1,c-1}).
\end{equation}
 Similarly, considering the removal of the $u$-, $v$-, $w$-, and $s$-triangles, we get
\begin{equation}
\M(G-\{u,v\})=\M(B_{x,a,b,c-1}),
\end{equation}
\begin{equation}
\M(G-\{w,s\})=\M(B_{x,a,b-1,c}),
\end{equation}
\begin{equation}
\M(G-\{u,s\})=\M(B_{x+1,a,b-1,c-1}),
\end{equation}
\begin{equation}
\M(G-\{v,w\})=\M(B_{x-1,a,b,c});
\end{equation}
see Figures \ref{Bkuo}(c)--(f), respectively. Plugging the above identities into the equation in Lemma \ref{Kuothm1}, we get a recurrence for the tiling number of a $B$-type region:
\begin{align}\label{Brecurrence}
\M(B_{x,a,b,c})\M(B_{x,a,b-1,c-1})=\M(B_{x,a,b,c-1})\M(B_{x,a,b-1,c})+\M(B_{x+1,a,b-1,c-1})\M(B_{x-1,a,b,c}).
\end{align}

\medskip

Now, we only need to show that the tiling formula of $B$-type region satisfies the same recurrence. We will consider the case where $b$ is odd and $c$ is even, the other cases follow analogously.
Denote respectively by $f(x,a,b,c)$ and $g(x,a,b,c)$ the expressions on the right-hand sides of (\ref{Beq1}) and (\ref{Beq2}).
Then  need to show that
\begin{align}\label{bcheck1}
g(x,a,b,c)f(x,a,b-1,c-1)=g(x,a,b,c-1)&f(x,a,b-1,c)\notag\\&+f(x+1,a,b-1,c-1)g(x-1,a,b,c).
\end{align}
 First notice each product contains $\M(P_{c,c,a})\M(P_{c-1,c-1,a})$ and these will cancel when dividing by the first product on the righthand side of (\ref{bcheck1}).

We now consider the factors containing the trapezoidal product $\V$ when dividing (\ref{bcheck1}) by \\$\M(B_{x,a,b,c-1})\M(B_{x,a,b-1,c})$. That is, we divide each of

\begin{multline}\label{bcheck2}
\dfrac{\V(2x+2a+b+2,c,\frac{c}{2})}{\V(2a+b+2,c,\frac{c}{2})}\cdot\dfrac{\V(2x+2a+b+2,c-2,\frac{c}{2})}{\V(2a+b+2,c-2,\frac{c}{2})},\\ \dfrac{\V(2x+2a+b+2,c-1,\frac{c}{2})}{\V(2a+b+2,c-1,\frac{c}{2})}\cdot\dfrac{\V(2x+2a+b+2,c-1,\frac{c}{2})}{\V(2a+b+2,c-1,\frac{c}{2})},\\ \dfrac{\V(2x+2a+b+4,c-2,\frac{c}{2})}{\V(2a+b+4,c-2,\frac{c}{2})}\cdot\dfrac{\V(2x+2a+b,c,\frac{c}{2})}{\V(2a+b,c,\frac{c}{2})}
\end{multline}
by the second pair of products. Using Lemma \ref{trapsimpn} we get

\begin{align}\label{bcheck3}
\dfrac{2x+2a+b+2c}{2x+2a+b+c}\cdot\dfrac{2a+b+c}{2a+b+2c}, 1, \dfrac{2x+2a+b}{2x+2a+b+c}\cdot\dfrac{2a+b+c}{2a+b+2c},
\end{align}
respectively.

Likewise, we consider the factors with the trapezoidal product $T$ in (\ref{bcheck1}), and divide by those in $\M(B_{x,a,b,c-1})\M(B_{x,a,b-1,c})$. Again using Lemma \ref{trapsimpn}, we are left with

\begin{align}\label{bcheck4}
\dfrac{x+2a+b+c}{2a+b+c}, 1, \dfrac{x}{2a+b+c},
\end{align}
respectively.

Combining (\ref{bcheck3}) and (\ref{bcheck4}), we see that verifying (\ref{bcheck1}) is equivalent to showing

\begin{align}\label{bcheck5}
\dfrac{2x+2a+b+2c}{2x+2a+b+c}\cdot&\dfrac{2a+b+c}{2a+b+2c}\cdot\dfrac{x+2a+b+c}{2a+b+c}\notag\\
 &=1+\dfrac{2x+2a+b}{2x+2a+b+c}\cdot\dfrac{2a+b+c}{2a+b+2c}\cdot\dfrac{x}{2a+b+c}.
\end{align}

A quick calculation shows that (\ref{bcheck5}), hence (\ref{bcheck1}), holds.
\end{proof}

Theorem \ref{Bthm2} can be treated in the same manner by applying Kuo condensation as in Figure \ref{halvedbowtie3}(b). This proof is omitted here.

\begin{figure}\centering
\includegraphics[width=12cm]{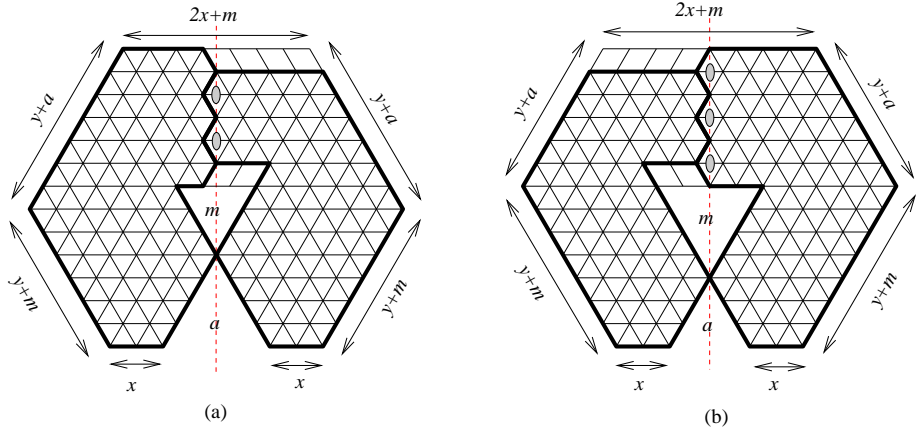}
\caption{An alternative proof of Theorem \ref{Bthm2}.}\label{halvedbowtie2}
\end{figure}

\begin{rmk}
An alternative proof of Theorem \ref{Bthm2} can be obtained by using Ciucu and Krattenthaler's enumeration of the region $HS\begin{pmatrix}x&y&0\\m&a&0\end{pmatrix}$ (called a `magnet bar' region in \cite{CK}) and Ciucu's Factorization Lemma \ref{ciucuthm}. In particular, apply the cutting procedure in Ciucu's Factorization Lemma to the dual graph $G$ of the  region $HS\begin{pmatrix}x&y&0\\m&a&0\end{pmatrix}$. We first consider the case in which $a$ is even. The component graph $G^+$ is isomorphic to the dual graph of the subregion with bold contour on the left of Figure \ref{halvedbowtie2}(a). This region is exactly the region $B_{x,m,a,y}$. The component graph $G^-$ corresponds to the subregion on the right. Next, remove forced lozenges on the top of the region on the right we get the region $B'_{x,m+1,a,y-1}$. Ciucu's Factorization Lemma gives us:
\begin{equation}
\M\left(HS\begin{pmatrix}x&y&0\\m&a&0\end{pmatrix}\right)=2^{y}\M(B_{x,m,a,y})\M(B'_{x,m+1,a,y-1}),
\end{equation}
for even $a$.

Similarly, when $a$ is odd, we get
\begin{equation}
\M\left(HS\begin{pmatrix}x&y&0\\m&a&0\end{pmatrix}\right)=2^{y}\M(B_{x,m+1,a,y-1})\M(B'_{x,m,a,y});
\end{equation}
see Figure \ref{halvedbowtie2}(b). Since the number of tilings on the left-hand side of the above identities is known (by Ciucu and Krattenthaler) and the tiling formula for the $B$-type region has been proved in Theorem \ref{Bthm1}, one can get the formula for the number of tilings of the $B'$-type region.
\end{rmk}

Next, we go to the proof of Theorem \ref{Hthm1}.

\begin{figure}\centering
\includegraphics[width=10cm]{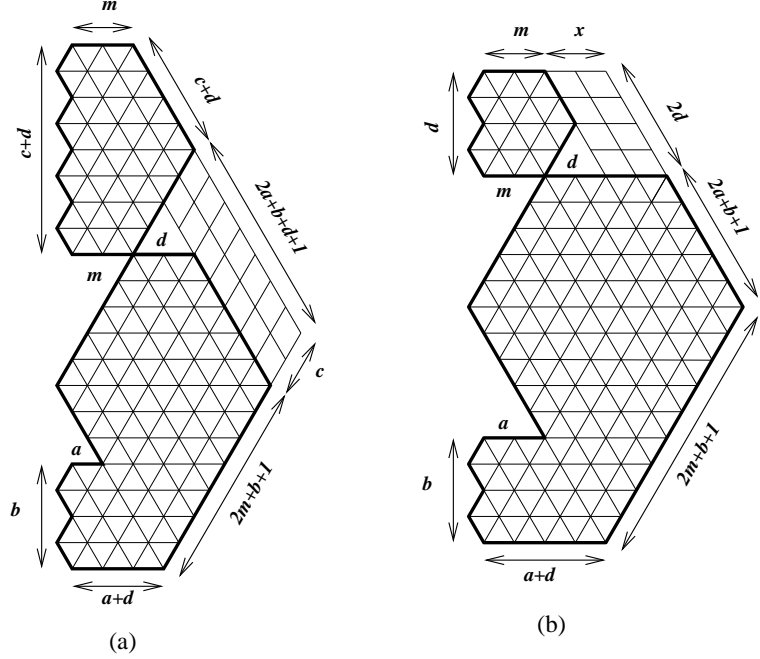}
\caption{The base cases of the proof of Theorem \ref{Hthm1}: (a) $x=0$ and (b)  $c=0$.}\label{H1base}
\end{figure}

\begin{proof}[Proof of Theorem \ref{Hthm1}]
We prove the theorem by induction on $x+a+c+2d$. The base cases are the cases when at least one of the parameters $x$, $c$, and $a+d$ is equal to zero.

If $x=0$, by Region-splitting Lemma \ref{RS}, we can separate the region $H_1\begin{pmatrix}0&b&c\\m&a&d\end{pmatrix}$ into two subregions and some forced lozenges as in Figure \ref{H1base}(a). In particular, the upper subregion is the halved hexagon $P_{m,m,c+d}$, and the lower subregion is the upside-down $B_{d,m,a,b}$. Thus, we get
\begin{equation}
\M_1\begin{pmatrix}0&b&c\\m&a&d\end{pmatrix}=\M(P_{m,m,c+d})\M(B_{d,m,a,b}).
\end{equation}
This case follows from Proctor's Theorem \ref{Proctiling} and Theorem \ref{Bthm1}.
If $c=0$, then we can also split our region into a halved hexagon and an upside-down $B$-type region (see Figure \ref{H1base}(b)). Then this case also follows from Theorems \ref{Proctiling} and \ref{Bthm1}.
Finally, our region in the case when $a=d=0$  was already enumerated by Ciucu (see Proposition 2.1 in \cite{Ciucu1}). 

\medskip


For the induction step, we assume that the parameters $x,c,a+d$ are positive and that the theorem holds for any $H_1$-type region with the sum of its $x$-, $a$-, $c$-, and twice $d$-parameters strictly less than $x+a+c+2d$.

Since $a+d>0$, at least one of $a$ and $d$ is positive. We first consider the case when $d$ is positive.

\begin{figure}\centering
\includegraphics[width=12cm]{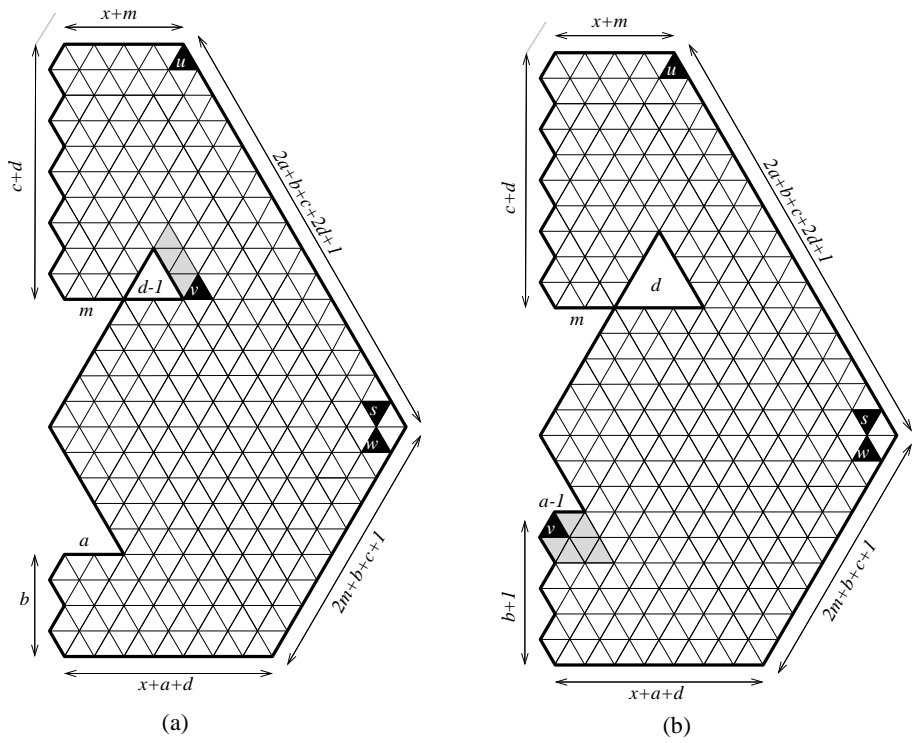}
\caption{How to apply Kuo condensation to the $H_1$-type region.}\label{H1Kuo1}
\end{figure}

We consider a new region $R$ obtained from $H_1\begin{pmatrix}x&b&c\\m&a&d\end{pmatrix}$ by shrinking the $d$-hole. In particular, we add a strip of unit triangles to the right side of the $d$-hole (we can do that since we are assuming $d>0$). The new region $R$ is now unbalanced, with one more up-pointing unit triangles than down-pointing triangles. Next, we apply Kuo's Lemma \ref{Kuothm2} to the dual graph $G$ of $R$ as in Figure \ref{H1Kuo1}(a) with the four vertices chosen as indicated by black unit triangles. In particular, the vertex $u$ corresponds to the $u$-triangle in the northeastern corner of the region, and the vertices $v,w,s$ correspond to the next black unit triangles as we go counterclockwise from the $u$-triangle.

\begin{figure}\centering
\includegraphics[width=10cm]{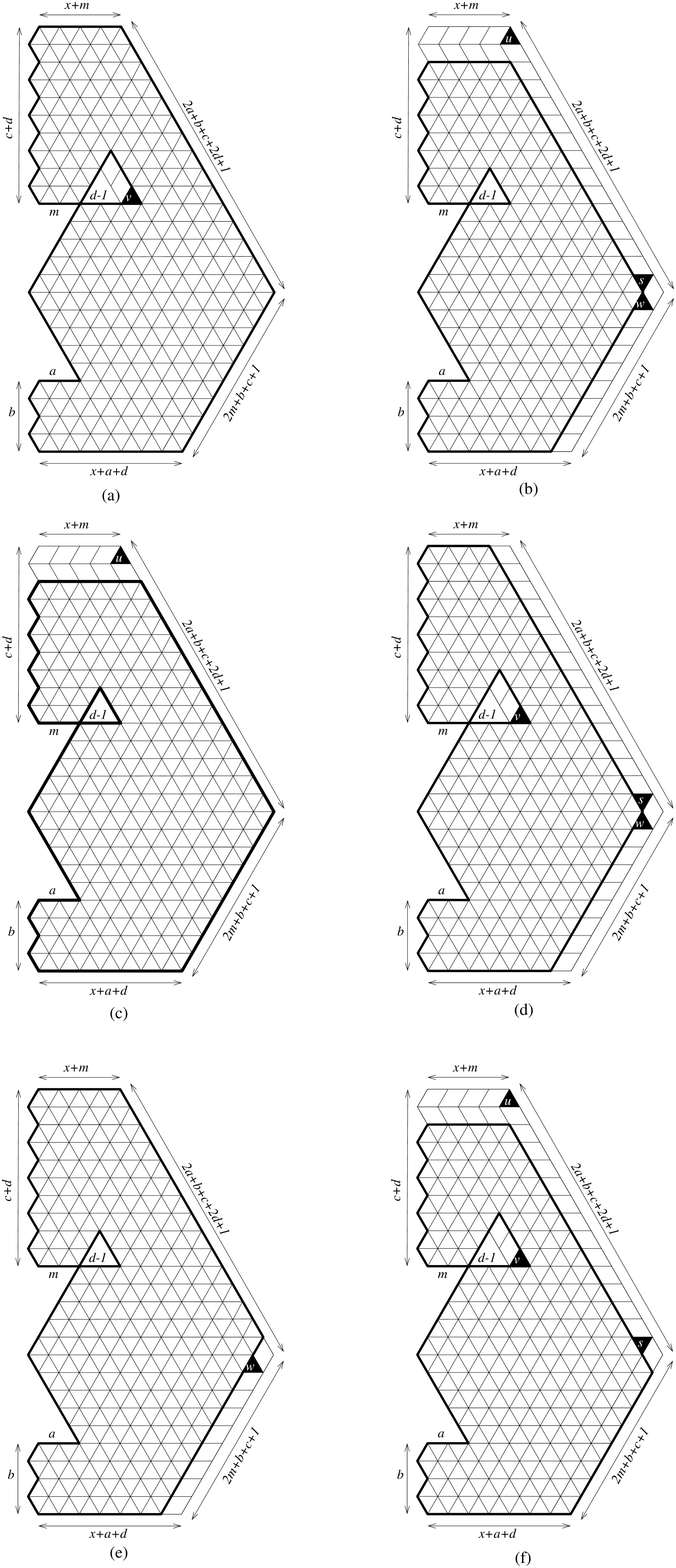}
\caption{Obtaining the recurrence for the number of tilings of the $H_1$-type region when $d>0$.}\label{H1Kuo2}
\end{figure}

Figure \ref{H1Kuo2} shows us that the product of the tiling numbers of the two regions in the top row is equal to the product of the tiling numbers of the two regions in the middle row, plus the product of the tiling numbers of the two regions in the bottom row. By considering forced lozenges as shown in the figure, we get the following recurrence for $d>0$:
\begin{align}\label{H1recurrence1}
\M_1\begin{pmatrix}x&b&c\\m&a&d\end{pmatrix}\M_1\begin{pmatrix}x&b&c\\m&a&d-1\end{pmatrix}=&\M_1\begin{pmatrix}x+1&b&c\\m&a&d-1\end{pmatrix}\M_1\begin{pmatrix}x-1&b&c\\m&a&d\end{pmatrix}\notag\\&+\M_1\begin{pmatrix}x&b&c+1\\m&a&d-1\end{pmatrix}\M_1\begin{pmatrix}x&b&c-1\\m&a&d\end{pmatrix}.
\end{align}

For the $a>0$ case, we shrink the $a$-hole by adding two rows of unit triangles to its base. Let $R'$ denote the resulting region. We now apply Kuo's Lemma \ref{Kuothm2} to the dual graph $G'$ of $R'$ with the four vertices $u,v,w,s$ selected as in Figure \ref{H1Kuo1}(b).

\begin{figure}\centering
\includegraphics[width=10cm]{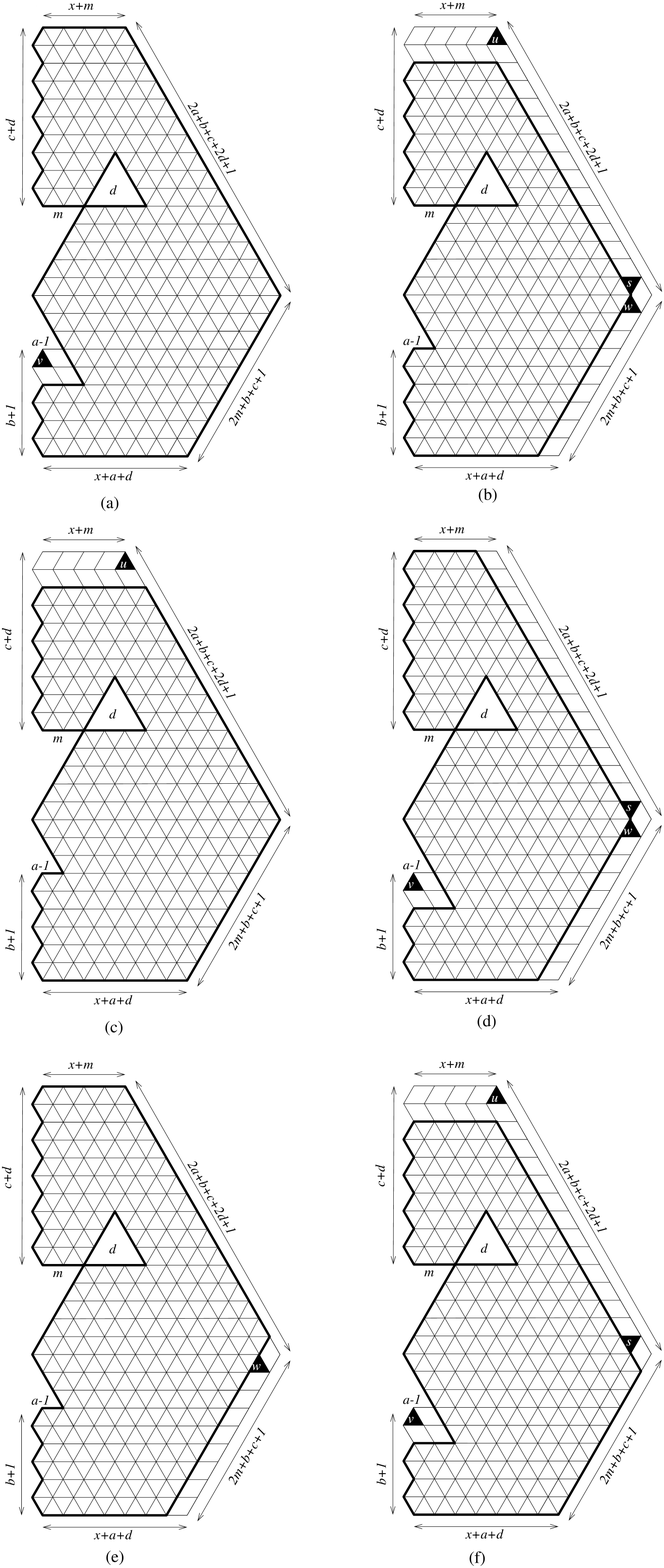}
\caption{Obtaining the recurrence for the number of tilings of the $H_1$-type region when $a>0$.}\label{H1Kuo3}
\end{figure}

Similar to the $d>0$ case,  by considering forced lozenges as in Figure \ref{H1Kuo3}, we get the following recurrence for $a>0$:
\begin{align}\label{H1recurrence2}
\M_1\begin{pmatrix}x&b&c\\m&a&d\end{pmatrix}\M_1\begin{pmatrix}x&b+1&c-1\\m&a-1&d\end{pmatrix}=&\M_1\begin{pmatrix}x+1&b+1&c-1\\m&a-1&d\end{pmatrix}\M_1\begin{pmatrix}x-1&b&c\\m&a&d\end{pmatrix}\notag\\&+\M_1\begin{pmatrix}x&b+1&c\\m&a-1&d\end{pmatrix}\M_1\begin{pmatrix}x&b&c-1\\m&a&d\end{pmatrix}.
\end{align}

Our final task is now checking that the tiling formula in (\ref{H1formula}) satisfies the above recurrences. We begin by plugging in the formula from (\ref{H1formula}) into (\ref{H1recurrence1}) and we will proceed like we did in the proof of Theorem \ref{Bthm1}. In particular, we divide (\ref{H1recurrence1}) by the first term on its right-hand side. 

First notice that the terms corresponding to $\M(P_{c+d,c+d,m})$, $\M(B_{d,a,2m+1,b})$, and all the trapezoidal products $\T$ with a $\min(a,m)$ in their first argument will cancel. We now consider the factors corresponding to $\dfrac{\T(x+1,c+d-1,d)}{\T(1,c+d-1,d)}.$ In particular we divide each of 

\begin{multline}
\dfrac{\T(x+1,c+d-1,d)\T(x+1,c+d-2,d-1)}{\T(1,c+d-1,d)\T(1,c+d-2,d-1)},\\
\dfrac{\T(x+2,c+d-2,d-1)\T(x,c+d-1,d)}{\T(1,c+d-2,d-1)\T(1,c+d-1,d)},\\
\dfrac{\T(x+1,c+d-1,d-1)\T(x+1,c+d-2,d)}{\T(1,c+d-1,d-1)\T(1,c+d-2,d)}
\end{multline}
by the second fraction. Using Lemma \ref{trapsimpn}, it is straightforward to see that these terms simplify to \begin{equation}\label{H1d1}
\frac{x+c}{x}, 1, \frac{c}{x},
\end{equation} respectively.

Similarly, 
\begin{multline}
\dfrac{\T(x+b+d+2m+2a+3,c+d-1,d)\T(x+b+d+2m+2a+2,c+d-2,d-1)}{\T(b+d+2m+2a+3,c+d-1,d)\T(b+d+2m+2a+2,c+d-2,d-1)},\\
\dfrac{\T(x+b+d+2m+2a+3,c+d-2,d-1)\T(x+b+d+2m+2a+2,c+d-1,d)}{\T(b+d+2m+2a+2,c+d-2,d-1)\T(b+d+2m+2a+3,c+d-1,d)},\\
\dfrac{\T(x+b+d+2m+2a+2,c+d-1,d-1)\T(x+b+d+2m+2a+3,c+d-2,d)}{\T(b+d+2m+2a+2,c+d-1,d-1)\T(b+d+2m+2a+3,c+d-2,d)}
\end{multline}
can be reduced, respectively, to 

\begin{equation}\label{H1d2}
\dfrac{x+b+2d+2m+2a+c+1}{x+b+2d+2m+2a+1},1,\dfrac{b+2d+2m+2a+c+1}{x+b+2d+2m+2a+1}.
\end{equation}

The only remaining terms requiring simplification upon division by \[\M_1\begin{pmatrix}x+1&b&c\\m&a&d-1\end{pmatrix}\M_1\begin{pmatrix}x-1&b&c\\m&a&d\end{pmatrix}\] are those corresponding to 
\[\dfrac{\M(B_{d+x,a,2m+1,b+c})}{\M(B_{d,a,2m+1,b+c})}.\] Notice that the third index is always odd, so we use (\ref{Beq1}). After performing the division and simplifying, we are left with 

\begin{equation}\label{H1d3}
1,1,\dfrac{2d+2x+2a+2m+b+c+1}{2d+2a+2m+b+c+1},
\end{equation}
respectively. We now combine (\ref{H1d1}), (\ref{H1d2}), and (\ref{H1d3}) and observe that the resulting equation holds. Therefore, the $d>0$ recurrence, (\ref{H1recurrence1}), is true. It only remains to verify (\ref{H1recurrence2}).

\medskip

We proceed similarly for the $a>0$ recurrence, dividing (\ref{H1recurrence2}) by the first term on the righthand side. As in the previous case, the factors corresponding to $\M(P_{c+d,c+d,m})$, $\M(B_{d,a,2m+1,b})$ cancel. Those corresponding to $\T(1,c+d-1,d)$ do as well. 

Simplifying the pairs of products arising from the $\T(x+1,c+d-1,d)$ terms (using Lemma \ref{trapsimpn}) gives us, respectively,

\begin{equation}\label{H1a1}
\dfrac{x+d}{x},1,\dfrac{x+d}{x}.
\end{equation}

We obtain

\begin{equation}\label{H1a2}
\dfrac{x+b+c+2d+2m+2a+1}{x+b+c+d+2m+2a+1},1,\dfrac{b+c+2d+2m+2a+1}{b+c+d+2m+2a+1}
\end{equation}

when dividing 

\begin{multline}
\dfrac{\T(x+b+d+2m+2a+3,c+d-1,d)\T(x+b+d+2m+2a+2,c+d-2,d)}{\T(b+d+2m+2a+3,c+d-1,d)\T(b+d+2m+2a+2,c+d-2,d)}, \\\dfrac{\T(x+b+d+2m+2a+3,c+d-2,d)\T(x+b+d+2m+2a+2,c+d-1,d)}{\T(b+d+2m+2a+2,c+d-2,d)\T(b+d+2m+2a+3,c+d-1,d)}, \\\dfrac{\T(x+b+d+2m+2a+2,c+d-1,d)\T(x+b+d+2m+2a+3,c+d-2,d)}{\T(b+d+2m+2a+2,c+d-1,d)\T(b+d+2m+2a+3,c+d-2,d)},
\end{multline}
by the second expression. For the remaining trapezoidal products, we assume $m\geq a$. The simplification works similarly if $m<a$. After dividing by the four remaining trapezoidal products in the first term on the righthand side of (\ref{H1recurrence2}) and simplifying, we get

\begin{equation}\label{H1a3}
\dfrac{(x+b+c+d+2a)(x+d+a+m+c)}{(x+b+d+m+a+1)(x+d+2m+1)}, 1, \dfrac{(b+c+d+2a)(d+m+a+c)}{(x+b+d+a+m+1)(x+d+2m+1)}.
\end{equation}

We now must simplify the terms corresponding to the ratio of the remaining matching generating functions of $B$-type regions. We further assume $b+c$ is even; the result follows analogously if $b+c$ is odd. After division and simplification we obtain, respectively,

\begin{multline}\label{H1a4}
\dfrac{(x+d+2m+1)(x+b+c+d+2a+2m+1)}{(x+d)(x+b+c+d+2a)},1,\\ \dfrac{(2x+2d+2a+2m+b+c+1)(d+2a+b+c+2m+1)(x+d+2m+1)}{(2d+2a+2m+b+c+1)(d+2a+b+c)(x+d)}.
\end{multline}

Finally, we combine the results (\ref{H1a1}), (\ref{H1a2}), (\ref{H1a3}), and (\ref{H1a4}) and verify that the resulting equation holds, finishing the proof of Theorem \ref{Hthm1}.

\end{proof}

Theorems \ref{Hthm2}--\ref{Hthm8} can be treated in the same manner as Theorem \ref{Hthm1}. In particular, we can prove them all by induction on $x+a+c+2d$ with the use of Kuo condensation. The  recurrences (\ref{H1recurrence1}) and (\ref{H1recurrence2}) still hold for all $H_i$-type regions, for $i=2,3,\dotsc,8$. We omit these proofs here.

We are now ready to prove our main theorem.

\begin{figure}\centering
\includegraphics[width=12cm]{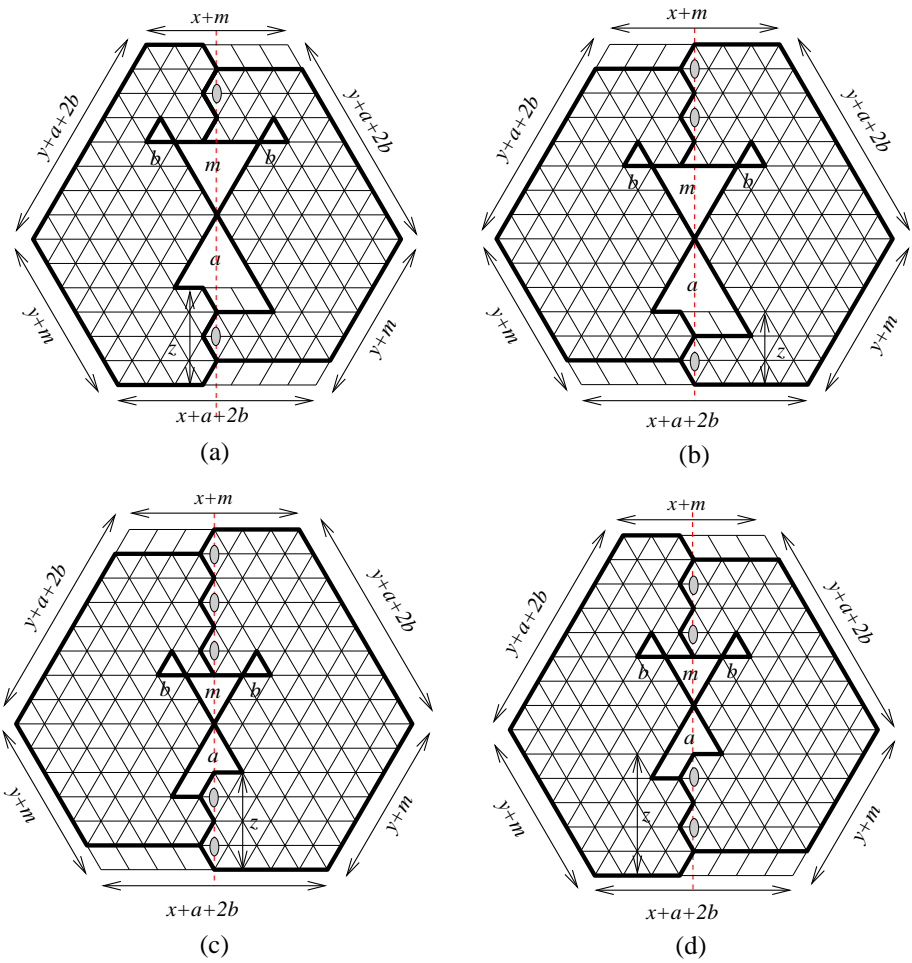}
\caption{Separating a symmetric hexagon with a shamrock missing into two $H_i$-type regions.}\label{divideShamrock}
\end{figure}

\begin{figure}\centering
\includegraphics[width=12cm]{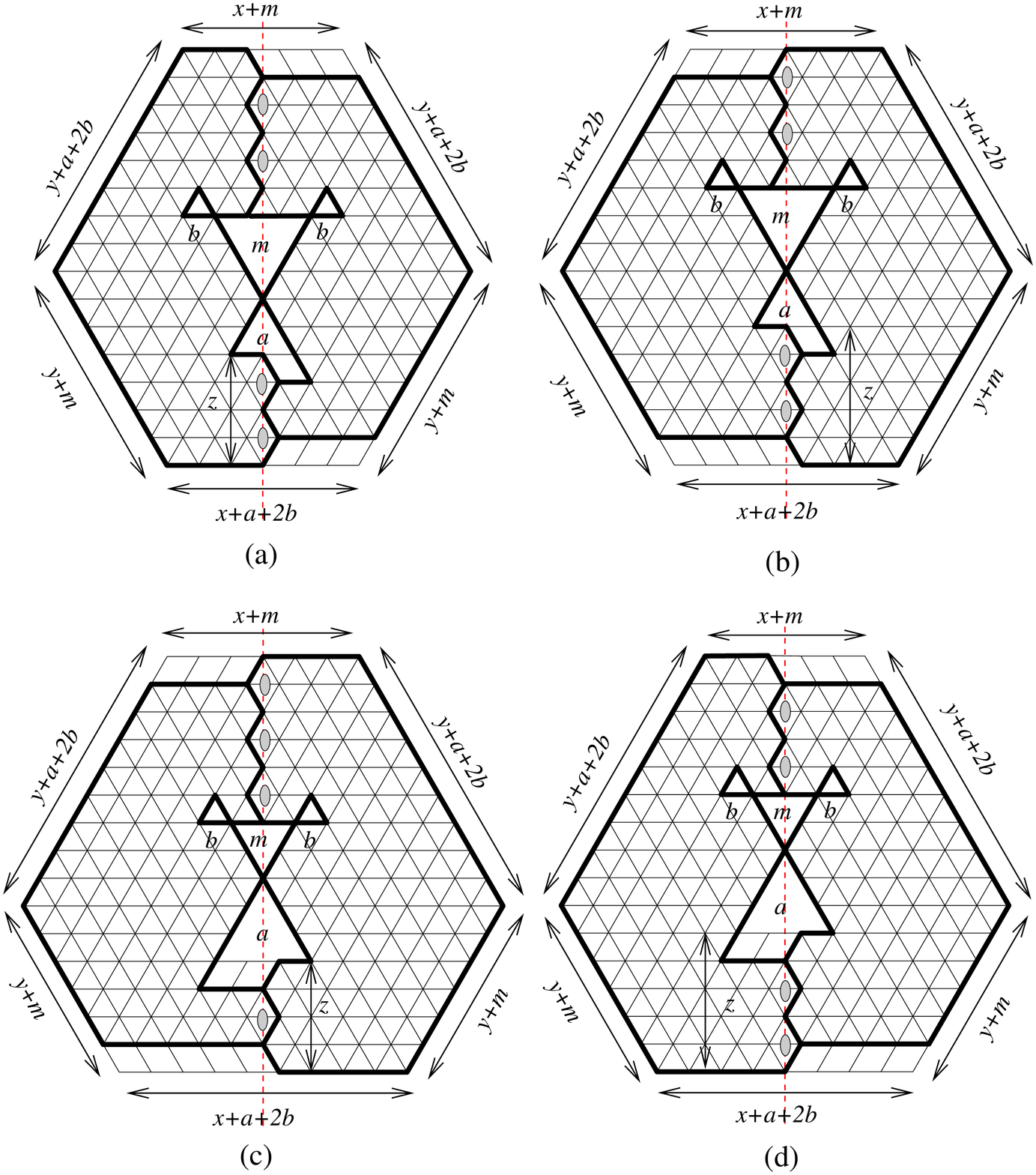}
\caption{Separating a symmetric hexagon with a shamrock missing into two $H_i$-type regions (cont.).}\label{divideShamrock2}
\end{figure}

\begin{proof}[Proof of Theorem \ref{mainthm}]
We first prove part (a).

We apply Ciucu's Factorization Lemma \ref{ciucuthm} to the dual graph $G$ of the symmetric hexagon with a shamrock removed from its symmetry axis $HS\begin{pmatrix}x&y&z\\m&a&b\end{pmatrix}$. 

We first consider the case when $a$ and $m$ are both odd, and $x$ is even (so $z$ is also even). Applying the cutting procedure in the Factorization Lemma to the dual graph $G$, we separate $G$ into two component graphs $G^+$ and $G^-$, and get
\begin{equation}\label{maineq1}
\M(G)=2^{y+b}\M(G^+)\M(G^-).
\end{equation}
Consider the left component subgraph $G^+$. It is easy to see that $G^+$ is isomorphic to the dual graph of the region restricted by the bold contour on the left of Figure \ref{divideShamrock}(a) (for the case $x=2,y=3,z=4,a=3,b=1,m=3$). The latter region is exactly the region $H_1\begin{pmatrix}\frac{x+m-1}{2}&\frac{z}{2}&y-\frac{z}{2}\\\frac{m-1}{2}&\frac{a-1}{2}&b\end{pmatrix}$ reflected over a vertical line. This means that
\begin{equation}\label{maineq2}
\M(G^+)=\M_1\begin{pmatrix}\frac{x+m-1}{2}&\frac{z}{2}&y-\frac{z}{2}\\\frac{m-1}{2}&\frac{a-1}{2}&b\end{pmatrix}.
\end{equation}
 The right component subgraph $G^-$ is isomorphic to the dual graph of the region on the right, where the lozenges with shaded cores have weight $1/2$. By removing forced lozenges along the northern and southern sides of the hexagon, as well as the ones along the bottom of the $a$-hole, we get the region $H_4\begin{pmatrix}\frac{x+m+1}{2}&\frac{z}{2}-1&y-\frac{z}{2}\\\frac{m+1}{2}&\frac{a+1}{2}&b\end{pmatrix}$, and so 
\begin{equation}\label{maineq3}
\M(G^-)=\M_4\begin{pmatrix}\frac{x+m+1}{2}&\frac{z}{2}-1&y-\frac{z}{2}\\\frac{m+1}{2}&\frac{a+1}{2}&b\end{pmatrix}.
\end{equation} 
By (\ref{maineq1}),  (\ref{maineq2}), and (\ref{maineq3}), we get 
\begin{align}
\M\left(HS\begin{pmatrix}x&y&z\\m&a&b\end{pmatrix}\right)=2^{y+b}\M_1\begin{pmatrix}\frac{x+m-1}{2}&\frac{z}{2}&y-\frac{z}{2}\\\frac{m-1}{2}&\frac{a-1}{2}&b\end{pmatrix}\M_4\begin{pmatrix}\frac{x+m+1}{2}&\frac{z}{2}-1&y-\frac{z}{2}\\\frac{m+1}{2}&\frac{a+1}{2}&b\end{pmatrix}.
\end{align}

\medskip

The second formula of part (a) can be obtained by the same arguments, based on Figure \ref{divideShamrock}(b) (for the case $x=3,y=3,z=3,a=3,b=1,m=3$). The only difference is that the forced lozenges appear on the left subregion (as opposed to the right subregion as in the previous case).

Parts (b), (c), and (d) can be obtained by the same manner, based respectively on Figures \ref{divideShamrock}(c) and (d), Figures \ref{divideShamrock2}(a) and (b), and Figures \ref{divideShamrock2}(c) and (d).
\end{proof}

\end{document}